\numberwithin{equation}{section}
\theoremstyle{theorem}
\newtheorem{theorem}{Theorem}[section]
\newtheorem*{theorem*}{Theorem}
\newtheorem{corollary}[theorem]{Corollary}
\newtheorem{lemma}[theorem]{Lemma}
\providecommand{\customgenericname}{}
\newcommand{\newcustomtheorem}[2]{%
	\newenvironment{#1}[1]
	{%
		\renewcommand\customgenericname{#2}%
		\renewcommand\theinnercustomgeneric{##1}%
		\innercustomgeneric
	}
	{\endinnercustomgeneric}
}
\theoremstyle{definition}
\newtheorem{definition}{Definition}[section]
\newtheorem*{example*}{Example}
\newtheorem*{examples*}{Examples}
\newtheorem{remark}{Remark}[section]
\newtheorem*{remark*}{Remark}
\newtheorem*{remarks*}{Remarks}
\newtheoremstyle{named}{}{}{\itshape}{}{\bfseries}{.}{.5em}{#1\thmnote{ #3}}
\theoremstyle{named}
\newcommand\qbin[2]{{\begin{bmatrix} #1 \\ #2 \end{bmatrix} }}
\newcommand{\Res}{\operatorname{Res}}
\newcommand{\qbinom}[2]{\begin{bmatrix}#1\\#2\end{bmatrix}}
\newcommand{\LHS}{\operatorname{LHS}}
\newcommand{\RHS}{\operatorname{RHS}}
\newcommand{\Aux}{\operatorname{Aux}}
\newcommand{\fS}{\mathfrak{S}}
\newcommand{\uk}{\underline{k}}
\newcommand{\trans}{\mathsf{T}}
\newcommand{\alt}{\mathrm{a}}
\begin{document}
	
\title[]
{The Ariki--Koike algebras and Rogers--Ramanujan type partitions}

\author[S. Chern]{Shane Chern}
\address[S. Chern]{Department of Mathematics and Statistics, Dalhousie University, Halifax, Nova Scotia, B3H 4R2, Canada}
\email{xh375529@dal.ca; chenxiaohang92@gmail.com}

\author[Z. Li]{Zhitai Li}
\address[Z. Li]{Department of Mathematics, The Pennsylvania State University, University Park, PA 16802, USA}
\email{zfl5082@psu.edu}

\author[D. Stanton]{Dennis Stanton}
\address[D. Stanton]{School of Mathematics, University of Minnesota, Minneapolis, MN 55455, USA}
\email{stant001@umn.edu}

\author[T. Xue]{Ting Xue}
\address[T. Xue]{School of Mathematics and Statistics, University of Melbourne, Parkville, Victoria 3010, Australia}
\email{ting.xue@unimelb.edu.au}

\author[A. J. Yee]{Ae Ja Yee}
\address[A. J. Yee]{Department of Mathematics, The Pennsylvania State University, University Park, PA 16802, USA}
\email{yee@psu.edu}

\subjclass[2010]{11P84, 05A17, 05A30, 05E10, 33D15,  20C08, 16G99}

\keywords{Generalized Rogers--Ramanujan identities, multipartitions, partition statistics,  blocks of cyclotomic Hecke algebras, cyclotomic rational double affine Hecke algebras}



\dedicatory{}

\maketitle

\begin{abstract}
In 2000,  Ariki and Mathas showed that the simple modules of the Ariki--Koike algebras $\mathcal{H}_{\mathbb{C},q;Q_1,\ldots, Q_m}\big(G(m, 1, n)\big)$ (when the parameters are roots of unity and $q\neq 1$) are labeled by the so-called Kleshchev multipartitions.  This together with Ariki's categorification theorem enabled Ariki and Mathas to obtain the generating function for the number of Kleshchev multipartitions by making use of the Weyl--Kac character formula. 
In this paper, we revisit this generating function for the $q=-1$ case. This $q=-1$ case is particularly interesting, for the corresponding Kleshchev multipartitions  have  a very close connection to generalized Rogers--Ramanujan type partitions when $Q_1=\cdots=Q_a=-1$ and $Q_{a+1}=\cdots =Q_m =1$. Based on this connection, we  provide an analytic proof of the result of Ariki and Mathas for $q=Q_1=\cdots Q_a=-1$ and $Q_{a+1}=\cdots =Q_m =1$.  
Our second objective is to investigate simple modules of the Ariki--Koike algebra in a fixed block.   It is known that these simple modules in a fixed block are labeled by the Kleshchev multiparitions with a fixed partition residue statistic.  This partition statistic is also studied  in the works of Berkovich, Garvan, and Uncu. Employing their results, we provide two bivariate generating function identities when $m=2$.
\end{abstract}
		
\section{Introduction}

The most fascinating identities in the theory of partitions are the Rogers--Ramanujan identities, which were originally proved by Rogers \cite{Rogers1984} and rediscovered by Ramanujan \cite{Ram1919-1}.  These identities have been a great source of research in the past several decades. They were reproved through many different approaches in the literature, and there also exist numerous similar identities and generalizations. Such identities are called Rogers--Ramanujan type identities or generalized Rogers--Ramanujan identities.  

The Rogers--Ramanujan identities appear in other areas as well, for instance, in  the representation theory of Lie algebra \cite{Lepowsky, Milne},
proving their mathematical relevance. In this paper, we will discuss another deep connection of these identities to the representations of the so-called Ariki--Koike algebras, or cyclotomic Hecke algebras of  type $G(m,1,n)$. 

The Ariki--Koike algebras, denoted by $\mathcal{H}_{\mathbb{C},q;Q_1,\ldots, Q_m}\big(G(m, 1, n)\big)$, can be viewed as the Iwahori--Hecke algebras associated to the complex reflection groups $G(m,1,n)\cong S_n\ltimes (\mathbb{Z}/m\mathbb{Z})^n$, where $q,Q_i,i=1,\ldots,m$ are parameters. They were introduced by Ariki and Koike \cite{ArikiKoike} and independently by Brou\'e and Malle \cite{BroueMalle}, where Brou\'e and Malle defined cyclotomic Hecke algebras for all complex reflection groups. In \cite{Ariki96,AM}, Ariki and Mathas showed that the simple modules of the Ariki--Koike algebras (when the parameters are roots of unity) are labeled by the so-called Kleshchev multipartitions. These multipartitions are in general defined recursively and no simple description is known except when $q=-1$. In the latter case, Mathas in~\cite{mathas} gave a simple combinatorial description of these multipartitions. 

In this paper, we are interested in the set $\Lambda^{a,m}(n)$ of multipartitions, whose exact definition will be given in Section~\ref{sec:prel} using Mathas' description, which parametrizes the simple modules of 
$$
\mathcal{H}_{\mathbb{C},q;Q_1,\ldots, Q_m}\big(G(m, 1, n)\big),
$$
where $Q_1=\cdots =Q_{a}=-1$, $Q_{a+1}=\cdots =Q_m=1$, and $q=-1$. 
  
The generating function for $\Lambda^{a,m}(n)$ can be deduced as a special case of a theorem due to Ariki and Mathas.
\begin{theorem}[Ariki--Mathas \cite{AM}]\label{thm:AM}
We have
\begin{align}
\sum_{n\ge 0} |\Lambda^{a,m}(n)| x^n 
&= \prod_{n\ge 1} \frac{(1-x^{(m+2)n})(1-x^{(m+2)(n-1) +(a+1)} )(1-x^{(m+2)n - (a+1)})}{(1-x^n)(1-x^{2n-1})}.
\end{align}
\end{theorem}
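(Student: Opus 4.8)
The plan is to avoid the representation-theoretic route of \cite{AM} (Ariki's categorification together with the Weyl--Kac character formula) and argue entirely with partitions and $q$-series. Everything hinges on the purely combinatorial description of $\Lambda^{a,m}(n)$ recalled in Section~\ref{sec:prel}, namely Mathas' description \cite{mathas} of the Kleshchev multipartitions at $q=-1$. The first step is to build a weight-preserving bijection that turns a multipartition $\underline{\lambda}=(\lambda^{(1)},\dots,\lambda^{(m)})\in\Lambda^{a,m}(n)$ into a single combinatorial object of generalized Rogers--Ramanujan type: I expect that reading off the first-column hook lengths (beta-numbers) of the $m$ components, shifting by the multicharge $(\underbrace{-1,\dots,-1}_{a},\underbrace{1,\dots,1}_{m-a})$, and interleaving them into one weakly decreasing sequence converts the Kleshchev condition into a bounded-gap (difference) condition of Andrews--Gordon/Bressoud shape with modulus $m+2$, the place where the two blocks of equal parameters meet producing precisely the offset $a+1$. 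In practice the most convenient target should be a family of overpartitions — equivalently, pairs consisting of an ordinary partition and a partition into odd parts — which mirrors the factor $(x^2;x^2)_\infty/(x;x)_\infty^2=1/\big((x;x)_\infty(x;x^2)_\infty\big)$ appearing below. The output of this step is an explicit formula for $\sum_{n\ge0}|\Lambda^{a,m}(n)|\,x^n$ as a multiple $q$-series.

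Next, I would identify that multiple series with the sum side of a known Rogers--Ramanujan type identity: for odd $m$ an instance of the Andrews--Gordon identities, for even $m$ an instance of Bressoud's even-modulus identities, or, if one works with the overpartition model throughout, a single overpartition analogue of these identities in the style of Corteel--Lovejoy — in each case with dilation $m+2$ and offset $a+1$. Either by quoting the identity, or, following Bressoud's and Andrews' sieving method, by a self-contained inclusion--exclusion argument that produces the theta quotient directly, one obtains
$$
\sum_{n\ge0}|\Lambda^{a,m}(n)|\,x^n
=\frac{\big(x^{a+1},\,x^{m+2-(a+1)},\,x^{m+2};\,x^{m+2}\big)_\infty}{(x;x)_\infty\,(x;x^2)_\infty}.
$$

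This is already the product claimed: $(x;x)_\infty=\prod_{n\ge1}(1-x^n)$, $(x;x^2)_\infty=\prod_{n\ge1}(1-x^{2n-1})$, and the theta factor is, after a trivial regrouping of factors,
$$
\big(x^{a+1},\,x^{m+2-(a+1)},\,x^{m+2};\,x^{m+2}\big)_\infty
=\prod_{n\ge1}(1-x^{(m+2)n})(1-x^{(m+2)(n-1)+(a+1)})(1-x^{(m+2)n-(a+1)}).
$$
If the previous step is instead phrased so that the theta \emph{series} $\sum_{k\in\mathbb{Z}}(-1)^kx^{(m+2)\binom{k}{2}+(a+1)k}$ emerges, one invokes the Jacobi triple product to convert it into the product above. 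One also checks the boundary cases $a=0$ and $a=m$, where all the $Q_i$ coincide, separately: there the interleaving in the first step degenerates, and the identity must reduce to the generating function for the Kleshchev multipartitions of an Ariki--Koike algebra with constant multicharge.

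The main obstacle is the bijection in the first step: proving that Mathas' description translates \emph{exactly}, and weight-preservingly with the statistic $n=|\underline{\lambda}|$ respected, into a Rogers--Ramanujan difference condition, and that the distinguished residue is $a+1$ rather than $a$ or $m-a$. Coaxing the two parities of $m$ (odd modulus $\rightsquigarrow$ Andrews--Gordon; even modulus $\rightsquigarrow$ Bressoud) out of one uniform combinatorial description, and pinning down which among the several equivalent Rogers--Ramanujan type identities is the right fit, are the points demanding care; once the bijection is in hand, the remainder is bookkeeping and a citation (or a routine sieve).
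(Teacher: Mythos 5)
Your plan correctly identifies the destination --- the product
$\big(x^{a+1},x^{m+1-a},x^{m+2};x^{m+2}\big)_\infty/\big((x;x)_\infty(x;x^2)_\infty\big)$ --- and the right external inputs, namely the parity identities of Andrews \cite{And2010} and Kim--Yee \cite{KY2013} (not the classical Andrews--Gordon/Bressoud identities, which lack the $(x;x^2)_\infty$ factor); this is also the paper's declared strategy. But the entire content of the proof is the step you label ``the main obstacle'' and then leave as an expectation: showing that $\Lambda^{a,m}(n)$ has the same generating function as the sum sides of those identities. You propose a weight-preserving bijection via interleaved, multicharge-shifted beta-numbers producing a single sequence with a modulus-$(m+2)$ gap condition, but you give no construction and no verification that the defining conditions of $\Lambda^{a,m}$ --- in conjugate form, a \emph{chain} of inequalities $\pi_1^{(i)}\le\ell(\pi^{(i+1)})$ between consecutive components, with a single $+1$ inserted at $i=a$ --- actually interleave into such a condition, nor that the distinguished residue comes out as $a+1$. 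As stated, this step is a conjecture, and the proof is not complete.

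The paper does not attempt a bijection at all. It first writes the generating function as a $q$-binomial multisum over the numbers of parts $N_1,\dots,N_m$ of the strict components (Theorem~\ref{th:gen-sum}), in which the condition at $i=a$ shows up as a single coefficient $\qbinom{N_i+1}{N_{i+1}}$. It then proves a symmetry $a\leftrightarrow m-a$ (Theorem~\ref{th:sym-a}) and a transformation formula (Theorem~\ref{th:2a-general}) that trades the ``$+1$'' in that coefficient for the linear correction $-\sum_{i=1}^a N_{2i}$ in the exponent; both rest on the inductive Lemma~\ref{le:1-2-a-i}, which is the technical heart of Section~\ref{sec5}. Only after this algebra does the multisum match the Andrews/Kim--Yee sum sides verbatim. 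Your proposal contains no substitute for this work, so there is a genuine gap: either supply the bijection explicitly (including the $a+1$ offset and both parities of $m$) or carry out an equivalent $q$-series reduction.
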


This theorem is particularly intriguing, for the infinite product representation of the generating function is well-known to the community of the partition theory. This infinite product without the factor $(1-x^{2n-1})$ in the denominator appears in the well-known generalized Rogers--Ramanujan identities of Andrews, Bressoud, and Gordon \cite{And1966, Bress, Gordon}. Even with that factor, this infinite product itself appears in Andrews' pioneering work on parity questions in classical partition identities \cite{And2010} and a follow-up paper of Kim and Yee \cite{KY2013}. 

In this paper, we will revisit Theorem~\ref{thm:AM} from a partition-theoretic perspective by showing that $\Lambda^{a,m}(n)$ is equinumerous with the set of the generalized Rogers--Ramanujan partitions with gap conditions studied in \cite{And2010} and \cite{KY2013}. In particular, we will be led to new analytic identities of Andrews--Gordon type.

Our second objective is to study the set $\Lambda^{a,m}(n)$ with partition residue statistics.   Partitions possess rich arithmetic properties, one of which is the well-known Ramanujan's partition congruences \cite{Ram1919}. In the study of such properties, combinatorial statistics play an important role. For instance, in \cite{GKS}, Garvan, Kim and Stanton introduced the so-called crank statistic to prove Ramanujan's congruences combinatorially.  Their crank statistic stems from core and quotient partitions, which are fundamental concepts in the representation theory of symmetric groups. 
An essential concept in defining core and quotient partitions is residue statistics. 

The mod $2$ residue (or $2$-residue) statistic for partitions was also studied by Berkovich and Garvan. In their paper on the Andrews--Stanley srank for integer partitions \cite{BerkovichGarvan2006}, they  defined a new partition statistic, namely, the BG-rank, and they showed that the BG-rank gives another combinatorial account for Ramanujan's mod $5$ partition congruence. Here, the BG-rank is in essence the $2$-residue statistic \cite{BerkovichGarvan2008}.  Berkovich and Uncu investigated further obtaining a refined generating function for partitions with a fixed BC-rank \cite{berkovich}.

In this paper, we will consider this $2$-residue statistic for multipartitions in $\Lambda^{a,2}(n)$ to obtain the following bivariate generating function identities. For a multipartition $\mu$ in $\Lambda^{a,2}(n)$,  we denote by $\omega(\mu)$ the $2$-residue of $\mu$, whose definition will be given in Section~\ref{sec:prel}.

\begin{theorem}\label{thm:1.2}
We have
\begin{align}
\sum_{n\ge 0} \sum_{\mu\in \Lambda^{1,2}(n)} y^{\omega(\mu)} x^{n} =\prod_{n\ge 1} \frac{(1+x^{2n})}{(1-x^{2n})} \prod_{n\ge 1} (1+ yx^{2n-1}) (1+y^{-1} x^{2n-1}) (1-x^{2n}) , \label{eq:1.2}
\end{align}
and
\begin{align}
& \sum_{n\ge 0} \sum_{\mu\in \Lambda^{2,2}(n)} y^{\omega(\mu)} x^{n}  \notag \\
& = \frac{1}{2} \prod_{n\ge 1}\frac{(1+x^{2n-1})}{(1- x^{2n})} \prod_{n\ge 1}   \Big(  (1+yx^{2n-2})(1+y^{-1}x^{2n}) + (1-yx^{2n-2})(1-y^{-1}x^{2n})\Big) (1- x^{2n}).  \label{eq:1.3}
\end{align}
\end{theorem}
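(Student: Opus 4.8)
The plan is to reduce Theorem~\ref{thm:1.2} to an explicit enumeration of the multipartitions in $\Lambda^{1,2}(n)$ and $\Lambda^{2,2}(n)$ refined by the $2$-residue statistic $\omega$, and then to recognize the resulting bivariate series as the stated infinite products. First I would unpack Mathas' combinatorial description of $\Lambda^{a,2}(n)$ from Section~\ref{sec:prel}: a pair $(\lambda^{(1)},\lambda^{(2)})$ lies in $\Lambda^{a,2}(n)$ precisely when each component is constrained by gap/parity conditions governed by the residues $Q_i$ (here $Q_1=-1$ and, when $a=2$, also $Q_2=-1$, versus $Q_2=1$ when $a=1$). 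Concretely I expect that for $a=1$ one component is forced to be a partition into distinct parts (the ``$q=-1$, $Q=-1$'' slot) while the other is essentially unrestricted, with the $2$-residue $\omega$ tracking the difference of the $2$-residues (BG-ranks) of the two components; for $a=2$ both components are distinct-part partitions with an analogous residue bookkeeping. The key translation step is to make this precise so that each side of \eqref{eq:1.2} and \eqref{eq:1.3} becomes a sum over pairs of partitions with transparent generating functions.

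Next I would compute the resulting generating functions directly. For \eqref{eq:1.2}: the distinct-part component contributes, when split according to parts of each residue class mod $2$, a factor of the form $\prod(1+yx^{2n-1})$ for the odd parts (say) and $\prod(1+y^{-1}x^{2n-1})$ or $\prod(1+x^{2n})$ for the even parts, the $y$-bookkeeping coming from how a part of given residue shifts $\omega$; the unrestricted component contributes $\prod 1/(1-x^n)$, which I would rewrite using $\prod 1/(1-x^n) = \prod 1/((1-x^{2n})(1-x^{2n-1}))$ and pair the odd-part factor $\prod 1/(1-x^{2n-1})$ against a compensating $\prod(1-x^{2n-1})$ that appears when one collapses the $y$-dependence, leaving exactly the claimed product $\prod (1+x^{2n})/(1-x^{2n}) \cdot \prod (1+yx^{2n-1})(1+y^{-1}x^{2n-1})(1-x^{2n})$. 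The identity \eqref{eq:1.3} is handled the same way but now with \emph{two} distinct-part components; the symmetric combination $(1+yx^{2n-2})(1+y^{-1}x^{2n})+(1-yx^{2n-2})(1-y^{-1}x^{2n})$ and the factor $\tfrac12$ strongly suggest that the enumeration naturally produces a sum over a sign/parity choice (e.g. the two components' contributions in a given ``column'' either both present or both absent, encoded by averaging over a sign $\varepsilon=\pm1$), so the $\tfrac12$ is the normalization of that average and the two summands are the $\varepsilon=+1$ and $\varepsilon=-1$ terms; the prefactor $\prod (1+x^{2n-1})/(1-x^{2n})$ then comes from the part of the data not involved in the residue pairing, together with a Jacobi-triple-product-style simplification. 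At the level of formal power series this is a routine manipulation once the combinatorial model is pinned down.

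Finally I would verify the two product formulas against Theorem~\ref{thm:AM} (specialized to $m=2$, $a=1,2$) by setting $y=1$, which must recover $\sum |\Lambda^{a,2}(n)| x^n$; this is a useful internal consistency check and, together with the $x^0$ coefficient, essentially forces the normalization constants. The main obstacle, and the step deserving the most care, is the first one: extracting from Mathas' recursive-looking description an \emph{explicit}, manifestly enumerable parametrization of $\Lambda^{a,2}(n)$ together with a clean formula for $\omega(\mu)$ in terms of that parametrization. Once that combinatorial dictionary is in place---in particular once $\omega$ is identified with a signed count of parts in a fixed residue class, matching the BG-rank statistic studied by Berkovich--Garvan and Berkovich--Uncu---the generating function computation and the final product identifications (via standard Euler-product and Jacobi-triple-product manipulations) are mechanical. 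I would also want to double-check the boundary behaviour of the gap conditions at part size $1$ and $2$, since those edge cases are exactly what distinguishes $\prod(1+x^{2n})$ from $\prod(1+x^{2n-1})$ and is responsible for the asymmetry between \eqref{eq:1.2} and \eqref{eq:1.3}.
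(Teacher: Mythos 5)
There is a genuine gap, and it is located exactly where you flagged the least risk. Your plan treats the two components of a pair in $\Lambda^{a,2}$ as contributing \emph{independently} to the generating function, so that the bivariate series factors as a product of one infinite product per component. But the defining condition (ii) of a $(2,\mathbf{t})$-restricted multipartition couples the components: in the conjugate (strict-partition) picture, the largest part of $\pi^{(1)}$ is bounded by $\ell(\pi^{(2)})+t_2-t_1$. No factorization over the two slots is possible; indeed your model for $a=1$ (one strict component, one essentially unrestricted component) would give a $y=1$ specialization of roughly $(-x;x)_\infty/(x;x)_\infty$, whereas the correct count is $\sum_n|\Lambda^{1,2}(n)|x^n=(-x;x)_\infty(-x;x^2)_\infty$, which is far smaller. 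The paper instead fixes $n=\ell(\pi^{(2)})$, writes $\pi^{(2)}=\delta+\sigma^{\trans}$ with $\sigma$ an ordinary partition into parts $\le n$, and lets $\pi^{(1)}$ range over strict partitions with parts $\le n$ (or $n+1$); this produces a genuinely coupled sum $\sum_n x^{\pm(\cdots)}q^{n(n+1)/2}g_n(x^{\pm1},q)f_{n+\epsilon}(x,q)$ built from the truncated Berkovich--Uncu polynomials $B_N(k,q)$ and the Rogers--Szeg\H{o}-type polynomials $g_N(x,q)$ (Theorems \ref{th:Lambda^{2,2}-sum} and \ref{th:Lambda^{1,2}-sum}).

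The second underestimate is the claim that, once the combinatorial dictionary is in place, the product identification is ``mechanical.'' Converting the resulting triple $q$-sums into the stated products is the real content of Section \ref{sec:bivariate}: one must extract $[x^{\omega}]G_a$, apply Heine's transformations and the evaluation \eqref{71+} of a well-poised ${}_2\phi_1$, and establish the nontrivial symmetry relations $H^+_{\omega}+H^+_{-\omega}=2q^{\omega^2}(-q^2;q^2)_\infty/(q^2;q^2)_\infty$, $H^-_{\omega}=H^-_{-\omega}$, and $I^-_{\omega}-I^-_{1-\omega}=(-1)^{\omega}q^{\omega^2-\omega}(q;q^2)_\infty/(q^2;q^2)_\infty$ before Jacobi's triple product can be invoked. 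Your proposal contains neither the coupled combinatorial model nor any substitute for this analytic machinery, so as written it does not constitute a proof. (Your closing consistency check against Theorem \ref{thm:AM} at $y=1$ is fine as a sanity check, but it cannot repair the factorization error.)
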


Let us write $\Lambda_\omega^{a,2}(n)$ for the set of multipartitions in $\Lambda^{a,2}(n)$ with 2-residue $\omega$. An application of Jacobi's triple product identity to each of the right hand sides of \eqref{eq:1.2} and \eqref{eq:1.3} yields the following corollary. 
\begin{corollary}\label{coro:1.4}
We have
\begin{equation}\label{number1}
\sum_{n\geq 0}|\Lambda_\omega^{1,2}(n)|x^n=x^{\omega^2}\prod_{n\ge 1}  \frac{ 1+x^{2n} }{1-x^{2n} },
\end{equation}
and
\begin{equation}\label{number2}
\qquad \sum_{n\geq 0}|\Lambda_\omega^{2,2}(n)|x^n=x^{\omega^2-\omega} \prod_{n\ge 1} \frac{ 1+x^{2n-1} }{ 1-x^{2n} }.
\end{equation}
\end{corollary}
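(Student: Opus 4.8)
The plan is to extract the coefficient of $y^\omega$ from each of the bivariate identities in Theorem~\ref{thm:1.2}. In both cases the $y$-dependence sits entirely in a product of the shape $\prod_{n\ge 1}(1+y\,x^{\alpha_n})(1+y^{-1}x^{\beta_n})$ times an $x$-only factor, so the task reduces to applying Jacobi's triple product identity in the standard form $\prod_{n\ge 1}(1-x^{2n})(1+y x^{2n-1})(1+y^{-1}x^{2n-1})=\sum_{k\in\mathbb{Z}}y^k x^{k^2}$, and then reading off the $y^\omega$-coefficient. Since the left sides are power series in $x$ with nonnegative integer coefficients (they count multipartitions), the resulting identities are automatically the generating functions $\sum_{n}|\Lambda_\omega^{a,2}(n)|x^n$.

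For \eqref{number1}, I would start from \eqref{eq:1.2}. The factor $\prod_{n\ge 1}(1+y x^{2n-1})(1+y^{-1}x^{2n-1})(1-x^{2n})$ is, by Jacobi's triple product, exactly $\sum_{k\in\mathbb{Z}} y^k x^{k^2}$. Hence the right side of \eqref{eq:1.2} equals $\prod_{n\ge 1}\frac{1+x^{2n}}{1-x^{2n}}\cdot\sum_{k\in\mathbb{Z}}y^k x^{k^2}$, and comparing the coefficient of $y^\omega$ gives \eqref{number1} immediately. The only care needed is to match the normalization of Jacobi's triple product with the form appearing in \eqref{eq:1.2}; no reindexing of the exponents is required here, which makes this the easy case.

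For \eqref{number2}, the $y$-factor is $\prod_{n\ge 1}\big((1+yx^{2n-2})(1+y^{-1}x^{2n}) + (1-yx^{2n-2})(1-y^{-1}x^{2n})\big)(1-x^{2n})$, and the trick is to recognize the symmetrization $\tfrac12(f(y)+f(-y))$ built into it: writing $f(y)=\prod_{n\ge1}(1+yx^{2n-2})(1+y^{-1}x^{2n})$, the bracketed sum is $f(y)+f(-y)$ (the cross terms with odd total power of $y$ cancel, the even ones double), so the whole right side of \eqref{eq:1.3} is $\tfrac12\prod_{n\ge1}\frac{1+x^{2n-1}}{1-x^{2n}}\big(P(y)+P(-y)\big)$ where $P(y)=\prod_{n\ge1}(1-x^{2n})(1+yx^{2n-2})(1+y^{-1}x^{2n})$. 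The main obstacle — and it is a mild bookkeeping one, not a conceptual one — is that $P(y)$ is a shifted version of the Jacobi triple product: substituting $y\mapsto yx^{-1}$ in the standard identity (or equivalently reindexing) gives $\prod_{n\ge1}(1-x^{2n})(1+yx^{2n-2})(1+y^{-1}x^{2n})=\sum_{k\in\mathbb{Z}}y^k x^{k^2-k}$. Then $P(y)+P(-y)=2\sum_{k\ \mathrm{even}} y^k x^{k^2-k}$, so after halving and extracting the coefficient of $y^\omega$ one obtains $x^{\omega^2-\omega}\prod_{n\ge1}\frac{1+x^{2n-1}}{1-x^{2n}}$, which is \eqref{number2}. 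I would double-check the sign/parity argument that only even powers of $y$ survive, since that is where an off-by-one in the indexing of the two products inside the bracket could creep in, but once the substitution $y\mapsto yx^{-1}$ is pinned down the rest is a one-line coefficient comparison.
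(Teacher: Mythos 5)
Your treatment of \eqref{number1} is exactly the paper's argument: apply Jacobi's triple product identity \eqref{eq:JTP} to the right-hand side of \eqref{eq:1.2}, obtaining $\prod_{n\ge1}\frac{1+x^{2n}}{1-x^{2n}}\sum_{k\in\mathbb{Z}}y^kx^{k^2}$, and read off the coefficient of $y^\omega$. That part is correct and needs no further comment.

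The derivation of \eqref{number2}, however, has a genuine gap. The identity \eqref{eq:1.3} has to be read in its unambiguous Pochhammer form \eqref{eq:Lambda22-bi-gf}, namely (with the residue variable written as $y$ and the size variable as $x$)
\[
\frac{(-x;x^2)_\infty(-y,-y^{-1}x^2,x^2;x^2)_\infty}{2(x^2;x^2)_\infty}
+\frac{(x;x^2)_\infty(y,y^{-1}x^2,x^2;x^2)_\infty}{2(x^2;x^2)_\infty},
\]
in which the two summands carry \emph{different} $y$-free prefactors, $(-x;x^2)_\infty=\prod_n(1+x^{2n-1})$ and $(x;x^2)_\infty=\prod_n(1-x^{2n-1})$. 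Your rewriting as $\tfrac12\prod_n\frac{1+x^{2n-1}}{1-x^{2n}}\bigl(P(y)+P(-y)\bigr)$ keeps a single common prefactor, and it also rests on distributing an infinite product over a sum, $\prod_n(A_n+B_n)=\prod_nA_n+\prod_nB_n$, which is false: taken literally, each factor of the bracketed product in \eqref{eq:1.3} collapses to $2(1+x^{4n-2})$, so that product carries no $y$-dependence at all. Moreover, your own intermediate claim $P(y)+P(-y)=2\sum_{k\ \mathrm{even}}y^kx^{k^2-k}$ would force the coefficient of $y^\omega$ to vanish for every odd $\omega$, contradicting \eqref{number2} and, e.g., $|\Lambda^{2,2}_1(1)|=1$; the parity check you defer is precisely where the argument breaks. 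The correct computation applies \eqref{eq:JTP} separately to the two products $(\mp y,\mp y^{-1}x^2,x^2;x^2)_\infty=\sum_k(\pm1)^ky^kx^{k^2-k}$ and yields
\[
[y^\omega]=\frac{x^{\omega^2-\omega}}{2(x^2;x^2)_\infty}\Bigl((-x;x^2)_\infty+(-1)^\omega(x;x^2)_\infty\Bigr),
\]
which is the part of $x^{\omega^2-\omega}(-x;x^2)_\infty/(x^2;x^2)_\infty$ supported on powers $x^n$ with $n\equiv\omega\pmod2$. One must then invoke the observation from Section~\ref{sec:2-restricted} that $\Lambda^{2,2}_\omega(n)=\varnothing$ when $n\not\equiv\omega\pmod2$ to identify this with \eqref{number2} on the only coefficients that can be nonzero. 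These are the ``cancellations'' the paper alludes to; they constitute the actual substance of the proof of \eqref{number2} and cannot be reached by the symmetrization you propose.
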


By~\cite{LM}, it is known that the set $\Lambda_{\omega}^{a,2}(n)$  parametrizes the set of simple modules in a fixed block, labeled by $\omega$, of the Ariki--Koike algebra $\mathcal{H}_{-1;-1, 1}\big(G(2, 1, n)\big)$ (resp.~$\mathcal{H}_{-1;-1, -1}\big(G(2, 1, n)\big)$), when $a=1$ (resp.~$2$). Note that in this case the Ariki--Koike algebra is an Iwahori--Hecke algebra of a Weyl group of type $B_n$ (or $C_n$). 
It is interesting to note that $|\Lambda_0^{1,2}(2n)|=|\Lambda_1^{1,2}(2n+1)|$ (resp.~$|\Lambda_0^{2,2}(2n)|$, $|\Lambda_1^{2,2}(2n+1)|$), the number of simple modules in the principal block of the corresponding Hecke algebra, equals the number of nilpotent orbits in the Lie algebra of type $C_n$ (resp.~$D_n$, $B_n$). 

 The rest of this paper is organized as follows. In Section~\ref{sec:prel}, we recall some definitions, notation, and necessary results on basic hypergeometric series.  In Section~\ref{sec:2-restricted},  we present some combinatorial facts about partitions in $\Lambda^{a,2}(n)$, and in Section~\ref{sec:bivariate}, we prove Theorem~\ref{thm:1.2}.  In Section~\ref{sec5}, we give an analytic proof of Theorem~\ref{thm:AM}.  We then conclude our paper providing some remarks in Section~\ref{sec6}. 


\section{Preliminaries} \label{sec:prel}
In this section, we recall basic partition definitions and the $q$-Pochhammer symbol notation. We also collect some necessary results on basic hypergeometric series for later use. 

\subsection{Partition definitions}

A \textit{partition} of $n$ is an integer sequence $\lambda=(\lambda_1,\ldots, \lambda_\ell)$ such that  $\lambda_1\ge \cdots \ge \lambda_{\ell}>0$ and $|\lambda|:=\lambda_1+\cdots +\lambda_\ell=n.$ We write it as $\lambda \vdash n$. The $\lambda_i$ are parts of $\lambda$ and the number of parts of $\lambda$ is denoted by $\ell(\lambda)$.   

For $\lambda \vdash n$, its {\it Young diagram}, also known as the {\it Ferrers diagram}, is the graphical representation, which consists of $n$ boxes (or dots) placed in rows such that there are $\lambda_i$ boxes (or dots) in the $i$-th row.  We denote the Young diagram of $\lambda$ by $Y_{\lambda}$. The {\it conjugate partition} of $\lambda$, denoted by $\lambda^{\trans}$, is the partition associated to the Young diagram resulting from reflecting the Young diagram of $\lambda$ about the main diagonal. In Figure~\ref{fig0},  the Young diagram of $\lambda=(5,4,4,2)$ is illustrated, and $\lambda^{\trans}=(4,4,3,3,1)$.

\begin{figure}[ht]
    \bigskip
    \centering
	\begin{tikzpicture}[scale=1.0]
		\draw[black] (2, 1) -- (4.5, 1);
		\draw[black] (2, 0.5) -- (4.5, 0.5);
		\draw[black] (2, 0) -- (4, 0);
		\draw[black] (2, -0.5) -- (4, -0.5);
		\draw[black] (2, -1) -- (3, -1);
		
		\draw[black] (2, 1) -- (2, -1);
		\draw[black] (2.5, 1) -- (2.5, -1);
		\draw[black] (3, 1) -- (3, -1);
		\draw[black] (3.5, 1) -- (3.5, -0.5);
		\draw[black] (4, 1) -- (4, -0.5);
		\draw[black] (4.5, 1) -- (4.5, 0.5);
		
			\end{tikzpicture}
	  \caption{ $Y_{(5, 4, 4, 2)}$}
    \label{fig0}
\end{figure}
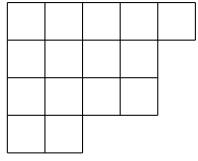 

For a positive integer $p$, the {\it residue} of a node $(i,j) \in Y_{\lambda}$ mod $p$ is defined to be
$$
\text{Res}(i,j):= (j-i) \,\bmod{p}. 
$$
Figure~\ref{fig00} shows the residues mod $3$ of $\lambda=(5,4,4,2)$. 
\begin{figure}[ht]
    \bigskip
    \centering
	\begin{tikzpicture}[scale=1.0]
		\draw[black] (2, 1) -- (4.5, 1);
		\draw[black] (2, 0.5) -- (4.5, 0.5);
		\draw[black] (2, 0) -- (4, 0);
		\draw[black] (2, -0.5) -- (4, -0.5);
		\draw[black] (2, -1) -- (3, -1);
		
		\draw[black] (2, 1) -- (2, -1);
		\draw[black] (2.5, 1) -- (2.5, -1);
		\draw[black] (3, 1) -- (3, -1);
		\draw[black] (3.5, 1) -- (3.5, -0.5);
		\draw[black] (4, 1) -- (4, -0.5);
		\draw[black] (4.5, 1) -- (4.5, 0.5);
				
		\draw (2.25,0.7) node{\footnotesize{$0$}};
		\draw (2.25,0.2) node{\footnotesize{$2$}};
		\draw (2.25,-0.3) node{\footnotesize{$1$}};
		\draw (2.25,-0.8) node{\footnotesize{$0$}};
		
		\draw (2.75,0.7) node{\footnotesize{$1$}};
		\draw (2.75,0.2) node{\footnotesize{$0$}};
		\draw (2.75,-0.3) node{\footnotesize{$2$}};
		\draw (2.75,-0.8) node{\footnotesize{$1$}};
		
		\draw (3.25,0.7) node{\footnotesize{$2$}};
		\draw (3.25,0.2) node{\footnotesize{$1$}};
		\draw (3.25,-0.3) node{\footnotesize{$0$}};

		\draw (3.75,0.7) node{\footnotesize{$0$}};
		\draw (3.75,0.2) node{\footnotesize{$2$}};
		\draw (3.75,-0.3) node{\footnotesize{$1$}};
		
		\draw (4.25,0.7) node{\footnotesize{$1$}};
			\end{tikzpicture}
	  \caption{ Residues mod $3$}
    \label{fig00}
\end{figure} 

For a positive integer $e$, a partition $\lambda$ is {\it $e$-restricted} if 
$$ 
\lambda_{i}-\lambda_{i+1}<e \text{ for $1\le i\le \ell(\lambda)$},
$$ 
where $\lambda_{\ell(\lambda)+1}=0$. 
In other words, if $\lambda$ is $e$-restricted, then its conjugate $\lambda^{\trans}$ can have at most $e-1$ parts of the same size.  
 
We now introduce multipartitions. An $m$-multipartition of $n$ is an $m$-tuple of partitions $(\lambda^{(1)},\ldots, \lambda^{(m)})$ such that
$$
 \sum_{i=1}^m |\lambda^{(i)}|=n.
$$
By abuse of notation, we denote a multipartition as $\lambda=(\lambda^{(1)},\ldots, \lambda^{(m)})$ and 
$$
|\lambda|:=\sum_{i=1}^m |\lambda^{(i)}|.
$$
Also, if $|\lambda|=n$, then we say $\lambda$ is a multipartition of $n$, and we write $\lambda \vdash n$. 

\begin{definition}
For a positive integer $e$ and a sequence ${\bf t}=(t_1,\ldots, t_m)$, an $m$-multipartition $\lambda$ is ($e, {\bf t})$-restricted if
\begin{itemize} 
\item[(i)] each $\lambda^{(i)}$ is $e$-restricted;
\item[(ii)] $\ell(\lambda^{(i)}) +t_i \le \lambda_{1}^{(i+1)} + t_{i+1}$  for $1\le i \le m-1 $.
\end{itemize}
\end{definition}

\begin{definition}
For an $(e,{\bf t})$-restricted partition $\lambda=(\lambda^{(1)}, \ldots, \lambda^{(m)})$, 
let 
$$
 Y_{\lambda} :=\{(i,j,s)\, |\, (i,j) \in Y_{\lambda^{(s)}}, \, s=1,\ldots, m\},
$$
where $Y_{\lambda^{(s)}}$ denotes the Young diagram of $\lambda^{(s)}$. Define the {\it $p$-residue} of a node $x=(i,j,s)\in Y_{\lambda^{(s)}}$ for $s=1,\ldots, m$ by
$$
\Res(x):=(j-i +t_s)\, \bmod{p}.  
$$

\end{definition}

\subsection{Notation}
Throughout the rest of this paper, we will adopt the following \textit{$q$-Pochhammer symbols}:
\begin{equation*}
(a;q)_{\infty}:=\prod_{j\ge 0} (1-aq^{j}),
\end{equation*}
and
\begin{equation*}
    (a;q)_n:=\frac{(a;q)_{\infty}}{(aq^n;q)_{\infty}} \text{ for any integer $n$.}
\end{equation*}
{
Here we note that this parameter $q$ is independent of the parameter $q$ for the Ariki--Koike algebras $\mathcal{H}_{\mathbb{C},q;Q_1,\ldots, Q_m}\big(G(m, 1, n)\big)$.}

Also, for brevity, the following notation will be used frequently:
\begin{align*}
	(a_1, a_2, \ldots, a_r;q)_n &:= (a_1;q)_n(a_2;q)_n \cdots (a_r;q)_n,\\
	(a_1, a_2, \ldots, a_r;q)_{\infty} &:= (a_1;q)_\infty (a_2;q)_\infty \cdots (a_r;q)_\infty.
\end{align*}

The \textit{$q$-binomial coefficients}, also known as the \textit{Gaussian polynomials}, are given by
\begin{align*}
	\qbinom{N}{M}:=\qbinom{N}{M}_q:=\begin{cases}
		\scalebox{0.85}{%
			$\dfrac{(q;q)_N}{(q;q)_M(q;q)_{N-M}}$} 
			& \text{if $0\le M\le N$},\\[6pt]
		0 & \text{otherwise}.
	\end{cases}
\end{align*}
They satisfy two basic recurrences \cite[p.~35, eqs.~(3.3.4) and (3.3.3)]{And1976}:
\begin{align}
	\qbinom{N}{M}&=\qbinom{N-1}{M} {\color{black} + } q^{N-M}\qbinom{N-1}{M-1}, \label{binomtri1}\\
	\qbinom{N}{M}&=\qbinom{N-1}{M-1}  {\color{black} + } q^{M}\qbinom{N-1}{M}. \label{binomtri2}
\end{align}
Also, the following  trivial relation follows from their definition:
\begin{align}\label{eq:q-binomial-trivial}
	(1-q^{M})\qbinom{N}{M}=(1-q^N)\qbinom{N-1}{M-1}.
\end{align}

\subsection{Some lemmas on basic hypergeometric series}

Recall that the \textit{basic hypergeometric series }   ${}_{r+1}\phi_r$ is defined by
\begin{align*}
	{}_{r+1}\phi_{r} \left(\begin{matrix} A_1,A_2,\ldots,A_{r+1}\\ B_1,B_2,\ldots,B_r  \end{matrix}; q, z\right):=\sum_{n\ge 0} \frac{(A_1,A_2,\ldots,A_{r+1};q)_n \, z^n}{(q,B_1,B_2,\ldots,B_{r};q)_n}.
\end{align*}

We list a set of useful identities: 
\begin{itemize}[leftmargin=*,align=left]
\renewcommand{\labelitemi}{$\triangleright$}
\item 
The $q$-binomial theorem \cite[(3.3.6)]{And1976}:
\begin{align}\label{eq:q-Bin}
	(z;q)_N = \sum_{n\ge 0} (-1)^n z^n q^{\binom{n}{2}}\qbinom{N}{n}_q.
\end{align}

\item The $q$-binomial theorem \cite[(II.3)]{GR2004}:
\begin{align}\label{eq:q-Bin-Ser}
	\sum_{n\ge 0}\frac{(a;q)_n z^n}{(q;q)_n} = \frac{(az;q)_\infty}{(z;q)_\infty}.
\end{align}

\item Jacobi's triple product identity \cite[(II.28)]{GR2004}:
\begin{align}\label{eq:JTP}
	(q,z,q/z;q)_\infty = \sum_{n=-\infty}^\infty (-1)^n z^n q^{\binom{n}{2}}.
\end{align}

\item Heine's first transformation \cite[(III.1)]{GR2004}:
\begin{align}\label{eq:Heine-1}
	{}_{2}\phi_{1}\left(\begin{matrix} a,b\\ c \end{matrix}; q, z\right) = \frac{(b,az;q)_\infty}{(c,z;q)_\infty} {}_{2}\phi_{1}\left(\begin{matrix} c/b,z\\ az \end{matrix}; q, b\right).
\end{align}

\item Heine's second transformation \cite[(III.2)]{GR2004}:
\begin{align}\label{eq:Heine-2}
	{}_{2}\phi_{1}\left(\begin{matrix} a,b\\ c \end{matrix}; q, z\right) = \frac{(c/b,bz;q)_\infty}{(c,z;q)_\infty} {}_{2}\phi_{1}\left(\begin{matrix} abz/c,b\\ bz \end{matrix}; q, \frac{c}{b}\right).
\end{align}
\end{itemize}

\bigskip

We also establish the following evaluation of a well-poised ${}_{2}\phi_1$ series.
\begin{lemma}
	We have
	\begin{align}\label{71+}
		{}_{2}\phi_1 \left(\begin{matrix} a,b \\ aq^2/b \end{matrix} ;  q^2, q/b\right) = \frac{(q;q^2)_{\infty} \big( (-\sqrt{a}, q\sqrt{a}/b ; q)_{\infty} + (\sqrt{a}, -q\sqrt{a}/b; q)_{\infty} \big)}{2 (aq^2/b;q^2)_{\infty} (q/b;q^2)_{\infty}} .
	\end{align}
\end{lemma}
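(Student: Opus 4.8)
The plan is to deduce \eqref{71+} from the $q$-binomial theorem \eqref{eq:q-Bin-Ser} by way of Heine's second transformation \eqref{eq:Heine-2} and a base-doubling manipulation. The first step is to apply \eqref{eq:Heine-2} with base $q^2$ under the specialization $a\mapsto a$, $b\mapsto b$, $c\mapsto aq^2/b$, $z\mapsto q/b$. Since then $c/b=aq^2/b^2$, $bz=q$ and $abz/c=b/q$, this gives
\[
{}_2\phi_1\!\left(\begin{matrix} a,b\\ aq^2/b\end{matrix}; q^2, \frac qb\right) = \frac{(aq^2/b^2,\,q; q^2)_\infty}{(aq^2/b,\,q/b; q^2)_\infty}\;{}_2\phi_1\!\left(\begin{matrix} b/q,\,b\\ q\end{matrix}; q^2, \frac{aq^2}{b^2}\right).
\]
The point of this step is that the resulting ${}_2\phi_1$ has a very special shape: its numerator parameters are $b/q$ and $b=q\cdot(b/q)$, while the denominator parameters (including the structural $(q^2;q^2)_n$) are $q$ and $q^2=q\cdot q$.

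Next I would collapse that ${}_2\phi_1$ using the elementary base-doubling identity $(x;q^2)_n(xq;q^2)_n=(x;q)_{2n}$, applied with $x=b/q$ in the numerator and $x=q$ in the denominator, together with $aq^2/b^2=(q\sqrt a/b)^2$. This yields
\[
{}_2\phi_1\!\left(\begin{matrix} b/q,\,b\\ q\end{matrix}; q^2, \frac{aq^2}{b^2}\right) = \sum_{n\ge 0}\frac{(b/q;q)_{2n}}{(q;q)_{2n}}\left(\frac{q\sqrt a}{b}\right)^{2n},
\]
that is, only the even-index terms of $g(w):=\sum_{k\ge0}\frac{(b/q;q)_k}{(q;q)_k}w^k$ at $w=q\sqrt a/b$ survive, so the series equals $\tfrac12\big(g(q\sqrt a/b)+g(-q\sqrt a/b)\big)$. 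Now $g(w)=\big((b/q)w;q\big)_\infty/(w;q)_\infty$ by the $q$-binomial theorem \eqref{eq:q-Bin-Ser} (with $a\mapsto b/q$), and since $(b/q)(q\sqrt a/b)=\sqrt a$ this gives $g(\pm q\sqrt a/b)=(\pm\sqrt a;q)_\infty/(\pm q\sqrt a/b;q)_\infty$. Substituting back and clearing the prefactor via $(q\sqrt a/b;q)_\infty(-q\sqrt a/b;q)_\infty=(aq^2/b^2;q^2)_\infty$ cancels the $(aq^2/b^2;q^2)_\infty$ factor and rearranges the result precisely into the right-hand side of \eqref{71+}.

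I do not expect a genuine obstacle: the argument is a chain of classical transformations, and the one thing one must notice is that Heine's second transformation converts the well-poised ${}_2\phi_1$ in base $q^2$ into a series that base-doubles to a symmetrized ${}_1\phi_0$ whose argument $aq^2/b^2$ is a perfect square — which is exactly what manufactures the two-term average in $\pm\sqrt a$ on the right side of \eqref{71+}. The only minor care needed is analytic: I would first establish the identity for parameters in a region of convergence (say $|q|<1$ with $|q/b|<1$ and $|aq^2/b^2|<1$) and then extend it by analytic continuation, observing that both sides are unchanged under $\sqrt a\mapsto-\sqrt a$, so the choice of square root is immaterial.
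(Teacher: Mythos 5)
Your proof is correct and follows essentially the same route as the paper's: the paper applies Heine's first transformation \eqref{eq:Heine-1} to land on ${}_{2}\phi_{1}\big(q^2/b,\,q/b;\,q;\,q^2,\,a\big)$ and extracts the even part in $\sqrt{a}$, whereas you apply Heine's second transformation \eqref{eq:Heine-2} and extract the even part in $q\sqrt{a}/b$. In both cases the mechanism is identical --- base-doubling of the Pochhammer symbols, averaging over $\pm$, the $q$-binomial theorem \eqref{eq:q-Bin-Ser}, and the final cancellation via $(x;q)_{\infty}(-x;q)_{\infty}=(x^2;q^2)_{\infty}$ --- and your bookkeeping checks out.
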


\begin{proof}
	Notice that
	\begin{align*}
		&{}_{2}\phi_1 \left(\begin{matrix} a,b \\ aq^2/b \end{matrix} ;  q^2, q/b\right) \\
		\text{\tiny (by \eqref{eq:Heine-1})}& =\frac{(a;q^2)_{\infty}(q;q^2)_{\infty}}{(aq^2/b;q^2)_{\infty} (q/b;q^2)_{\infty}} {}_{2}\phi_1 \left(\begin{matrix} q^2/b,q/b  \\ q\end{matrix} ;  q^2, a \right) \\
		&= \frac{(a;q^2)_{\infty}(q;q^2)_{\infty}}{(aq^2/b;q^2)_{\infty} (q/b;q^2)_{\infty}}  \sum_{k=0}^{\infty} \frac{(q/b;q)_{2k}}{(q;q)_{2k}} a^k\\
		&= \frac{(a;q^2)_{\infty}(q;q^2)_{\infty}}{2 (aq^2/b;q^2)_{\infty} (q/b;q^2)_{\infty}}  \left( \sum_{k=0}^{\infty} \frac{(q/b;q)_{k}}{(q;q)_{k}} \sqrt{a}^k +\sum_{k=0}^{\infty} \frac{(q/b;q)_{k}}{(q;q)_{k}} (-\sqrt{a})^k\right) \\
		\text{\tiny (by \eqref{eq:q-Bin-Ser})}&= \frac{(a;q^2)_{\infty}(q;q^2)_{\infty}}{2 (aq^2/b;q^2)_{\infty} (q/b;q^2)_{\infty}} \left(\frac{(q\sqrt{a}/b;q)_{\infty}}{(\sqrt{a};q)_{\infty}} + \frac{(-q\sqrt{a}/b;q)_{\infty}}{(-\sqrt{a};q)_{\infty}} \right) \\
		&= \frac{(q;q^2)_{\infty}  \big( (-\sqrt{a}, q\sqrt{a}/b;q)_{\infty} + (\sqrt{a}, -q\sqrt{a}/b  ;q)_{\infty} \big)  }{2 (aq^2/b;q^2)_{\infty} (q/b;q^2)_{\infty}}.
	\end{align*}
	This is exactly \eqref{71+}.
\end{proof}


\section{$2$-Restricted multipartitions} \label{sec:2-restricted}

For $m\ge 2$ and $1\le a\le m$,  we set
$$
{\bf e }= (2, {\bf t}),
$$
where
\begin{equation}\label{eq:t-def}
	t_1=\cdots =t_a=0, \quad t_{a+1}=\cdots =t_m=1.
\end{equation}

It easily follows from the definition of restricted multipartitions that 
a multipartition is ${\bf e }$-restricted if and only if its conjugate $\pi=(\pi^{(1)}, \ldots, \pi^{(m)})$ is a multipartition such that
\begin{enumerate}
\item[(i)] each $\pi^{(i)}$ is a strict partition, i.e., parts are all distinct;
	
\item[(ii)] $\pi_1^{(i)}  +t_i\le \ell(\pi^{(i+1)}) +t_{i+1}$ for $1\le i\le m-1$.
\end{enumerate}
Throughout this section and the rest of this paper, we will use this equivalent definition for ${\bf e}$-restricted partitions. 

Let 
$$
\Lambda^{a,m}:= \{ \text{ $(2,{\bf t})$-restricted  $m$-multipartitions }   \}
$$ 
and
$$
\Lambda^{a,m}(n):=\{ \pi\in \Lambda^{a,m}\, |\, \pi \vdash n\}. 
$$

Recall the residues of multipartitions from Section~\ref{sec:prel}.

For $\pi \in \Lambda^{a,m}$, define
$$
\omega(\pi):=C_0(\pi) -C_1(\pi),
$$
where
$$
C_i (\pi) := | \{ x\in Y_{\pi} \, |\, \text{Res} (x) = i \}|.
$$
For a fixed integer $\omega$, we also define
$$
\Lambda_{\omega}^{a,m}:=\{ \pi \in \Lambda^{a,m}  \,|\, \omega(\pi)=\omega \},
$$
and
$$
\Lambda_{\omega}^{a,m}(n):=\{\pi \in  \Lambda^{a,m}(n) \, |\, \omega(\pi)=\omega\}. 
$$ 
Note that $\omega(\pi)\equiv |\pi| \bmod{2}$, so  if $\omega \not\equiv n  \bmod{2}$, then $\Lambda_{\omega}^{a,m}(n)$ is the empty set.

\subsection{2-Multipartitions}

Our objective is to investigate how mod $2$ residues behave for partition pairs in $\Lambda^{a,2}$ for $a=1,2$.  We will first consider the $a=2$ case and then the $a=1$ case. But, to bring out a delicate account of the behavior of the residues, we need a few more definitions and results from \cite{berkovich}, which will be discussed in the next subsection.

\subsubsection{Odd and even-indexed parts}

For a partition $\lambda$, we call $\lambda_1,\, \lambda_3,\, \lambda_5,\, \ldots$ odd-indexed parts, and $\lambda_2,\, \lambda_4,\, \lambda_6,\, \ldots$ even-indexed parts.  Let $d_N(i,j,n)$ count the number of partitions of $n$ into distinct parts less than or equal to $N$ with $i$ odd-indexed odd parts and $j$ even-indexed odd parts. For instance,  the following partition is counted by $d_{10}(2, 3, 43)$: 
$$
(10, 9, 7, 6, 5, 3, 2, 1). 
$$

Define
$$
B_{N}(k,q):=\sum_{j\ge 0} \sum_{n\ge 0} d_{N} (j+k, j, n) q^n. 
$$
\begin{theorem}[\cite{berkovich}, Theorem 4.1] \label{berkovich2}
	For $v=0,1$,
	\begin{align*}
		B_{2N+v}(k,q) = q^{2k^2-k}  \begin{bmatrix} 2N +v \\ N+k \end{bmatrix}_{q^2}.
	\end{align*}
\end{theorem}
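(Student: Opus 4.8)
The plan is to set up a generating function for the statistic $B_N(k,q)$ directly and then recognize it as a $q$-binomial coefficient. First I would observe that a partition into distinct parts $\le N$ is encoded by a subset $S\subseteq\{1,2,\ldots,N\}$, so $B_N(k,q)$ packages the joint distribution of the weight, the number of odd-indexed odd parts, and the number of even-indexed odd parts. The key structural remark is that, after sorting the chosen parts in decreasing order, the parity of the \emph{index} of a given part is controlled by how many parts strictly larger than it were selected; so the parities interleave in a predictable way. I would split the parts into odd parts and even parts. The even parts among $\{1,\ldots,N\}$ — there are $\lfloor N/2\rfloor$ of them — contribute a factor that is a $q$-binomial in $q^2$ (writing even part $2i$, the generating function for choosing a subset of $\{2,4,\ldots\}$ with total weight is $\sum q^{2(\cdots)}$), and they do not affect the odd/even-indexed \emph{odd} part counts \emph{except} through how many of them sit above each odd part, which shifts indices. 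This interaction is the crux.

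The cleaner route, which I would actually pursue, is a bijective/recursive one paralleling the $q$-binomial recurrences \eqref{binomtri1}–\eqref{binomtri2}. Set $f_N(k,q):=B_N(k,q)$ and show it satisfies the same recurrence as $q^{2k^2-k}\qbinom{2N+v}{N+k}_{q^2}$ in passing from $N$ to $N+1$ (i.e., from allowing largest part $\le 2N+v$ to $\le 2N+v+2$), together with matching initial conditions. Concretely, I would condition on whether the part $2N+v+1$ (an odd part, the new largest available odd value) and/or $2N+v+2$ (a new even part) are used. Using the largest part as a ``toggle'' flips every index parity below it; so including one new odd part sends the configuration counted with split $(j+k,j)$ to one with split $(j,j+k+\text{something})$, which is exactly the reflection $k\mapsto 1-k$ (or $-k$) visible in the identity $\qbinom{2N+v}{N+k}=\qbinom{2N+v}{N+1-k}$. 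Tracking the weight shift $2k^2-k \to 2k^2-k \pm(\cdots)$ through this toggle should reproduce the recurrence \eqref{binomtri1} for $q$-binomials in base $q^2$, with the prefactor $q^{2k^2-k}$ absorbing the powers of $q$ that \eqref{binomtri1} carries.

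For the base case I would take $N=0$: then $v\in\{0,1\}$, the only available parts are among $\{1\}$ (if $v=1$) or $\emptyset$ (if $v=0$), and one checks $B_0(k,q)$ equals $q^{2k^2-k}\qbinom{v}{k}_{q^2}$ directly — nonzero only for $k=0$ (both $v$) and $k=1$ when $v=1$, matching $q^{2\cdot 1-1}=q$ for the single partition $(1)$, which has one odd-indexed odd part and zero even-indexed odd parts, i.e.\ contributes to $B_1(1,q)$ — so I would actually anchor at $N=0$ and also sanity-check $N=1$ against the stated formula. The main obstacle I anticipate is the bookkeeping in the toggle step: making precise how prepending a new largest \emph{odd} part versus a new largest \emph{even} part shifts the pair $(\#\text{odd-indexed odd},\ \#\text{even-indexed odd})$ and the total weight, and verifying that the two cases $v=0$ and $v=1$ assemble coherently under $N\mapsto N+1$ (which raises $2N+v$ by $2$ while the $q$-binomial recurrence naturally steps $N$ by $1$). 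Once that shift is pinned down, matching it against \eqref{binomtri1}/\eqref{binomtri2} with $q\to q^2$ and extracting the $q^{2k^2-k}$ prefactor is routine algebra.
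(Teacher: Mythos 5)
Your strategy is sound, and it is worth noting that the paper itself does not prove this statement at all: it is imported verbatim from Berkovich--Uncu \cite[Theorem 4.1]{berkovich}, so there is no internal proof to compare against. Your recursive ``toggle'' argument does yield a correct self-contained proof, and the bookkeeping you were worried about works out cleanly provided you induct on the cap one unit at a time rather than two. Writing $M$ for the bound on the parts and $\epsilon\in\{0,1\}$ for the parity of $M+1$, prepending the part $M+1$ shifts every old index by one, so an old split $(i,j)$ becomes $(j+\epsilon,\,i)$ and hence $k\mapsto\epsilon-k$; conditioning on whether $M+1$ is used gives the single recurrence
\begin{align*}
B_{M+1}(k,q)=B_{M}(k,q)+q^{M+1}B_{M}(\epsilon-k,q).
\end{align*}
Substituting the claimed closed form and using the symmetry $\qbinom{n}{m}=\qbinom{n}{n-m}$ to undo the reflection, the case $M=2N$ (so $\epsilon=1$) reduces to
$\qbinom{2N+1}{N+k}_{q^2}=\qbinom{2N}{N+k}_{q^2}+q^{2(N+1-k)}\qbinom{2N}{N+k-1}_{q^2}$, which is \eqref{binomtri1} in base $q^2$, while the case $M=2N+1$ (so $\epsilon=0$) reduces to
$\qbinom{2N+2}{N+1+k}_{q^2}=\qbinom{2N+1}{N+k}_{q^2}+q^{2(N+1+k)}\qbinom{2N+1}{N+1+k}_{q^2}$, which is \eqref{binomtri2} in base $q^2$; the prefactor $q^{2k^2-k}$ absorbs the exponent shifts exactly as you predicted, and the base case $B_0(k,q)=\delta_{k,0}$ anchors the induction. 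Two small corrections to your write-up: the new part contributes to the \emph{odd}-indexed (first) coordinate of the split, not the second, though your conclusion $k\mapsto 1-k$ versus $k\mapsto -k$ is right; and the parity of $2N+v+1$ depends on $v$ (it is even when $v=1$), which is another reason to phrase the induction in terms of $M$ rather than in terms of $N$ with $v$ fixed.
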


Let
\begin{align*}
	f_{N}(x,q):=\sum_{k=-\infty}^{\infty} B_{N}(k,q) x^{k}.
\end{align*}

For a partition $\lambda$, let 
$$
|\lambda|_{\alt}:=\sum_{i\ge 1} (-1)^{i-1} \lambda_i,
$$ 
which is called the alternating sum of $\lambda$. Let
$$
g_N(x,q):=\sum_{\lambda}  x^{|\lambda|_{\alt}} q^{|\lambda|},
$$
where the sum is over all partitions $\lambda$ with parts less than or equal to $N$.  We have the following nice formula for $g_N(x,q)$.

\begin{theorem}[{\cite[Theorem 5.2]{berkovich}}, \cite{zeng}] \label{zeng}
	For $v=0,1$, 
	\begin{align*}
		g_{2N+v}(x,q)&=\frac{1}{(q^2;q^2)_{2N+v}} 
		\sum_{i=0}^N \begin{bmatrix} N\\ i \end{bmatrix}_{q^{4}} x^{2i} q^{2i} (-xq;q^4)_{N-i+v}(-x^{-1}q; q^4)_{i}\\
		&=\frac{1}{(q^2;q^2)_{2N+v}} \sum_{j=0}^{2N+v} x^j q^j\begin{bmatrix} 2N+v\\ j\end{bmatrix}_{q^2}.
	\end{align*}
\end{theorem}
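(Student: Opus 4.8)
The statement to prove is the evaluation of $g_{2N+v}(x,q)$ in Theorem~\ref{zeng}. The generating function $g_N(x,q)=\sum_\lambda x^{|\lambda|_{\alt}}q^{|\lambda|}$ runs over all partitions with parts $\le N$, weighted by $q^{|\lambda|}$ and by $x$ raised to the alternating sum. The plan is to establish the \emph{second} equality first, since it is the cleaner combinatorial identity, and then derive the first equality from it by a finite manipulation of $q$-binomial coefficients; alternatively one can prove both by building up $N$ and matching a recurrence.

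First I would compute $g_N(x,q)$ directly. Conjugating $\lambda$, a partition with at most $N$ parts has columns of heights $c_1\ge c_2\ge\cdots$ with each $c_j\le N$; but it is more efficient to track parts by their value. Write $\lambda$ by its multiplicities $m_1,m_2,\ldots,m_N$, where $m_i$ is the number of parts equal to $i$. Then $|\lambda|=\sum_i i\,m_i$. The alternating sum $|\lambda|_{\alt}=\lambda_1-\lambda_2+\lambda_3-\cdots$ equals $\sum_i m_i^{\ast}\cdot(\pm 1)$-type contributions; concretely, grouping the $m_i$ copies of the value $i$, a block of $m_i$ equal parts contributes $i$ to $|\lambda|_{\alt}$ if it occupies an odd number of the running index slots in odd position, i.e. the contribution of the value-$i$ block is $i$ if $m_i$ is odd and the block starts at an odd index, $0$ if $m_i$ is even, and $-i$ if $m_i$ odd and the block starts at an even index; the parity of the starting index is determined by $m_{i+1}+m_{i+2}+\cdots \bmod 2$ (the number of larger parts). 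Summing the geometric series over each $m_i\ge 0$ with this sign bookkeeping telescopes, and I expect it to collapse to
\[
g_N(x,q)=\frac{1}{(q^2;q^2)_{\lfloor N/2\rfloor}\,(q^2;q^2)_{\lceil N/2\rceil}}\cdot\big(\text{a single sum}\big),
\]
which for $N=2N'+v$ should match $\frac{1}{(q^2;q^2)_{2N'+v}}\sum_{j=0}^{2N'+v}x^j q^j\qbinom{2N'+v}{j}_{q^2}$. A cleaner route: separate $\lambda$ into its odd-indexed and even-indexed parts. The even-indexed parts, read off, form a partition $\mu$ with $\le N'$ parts (if $N=2N'$ or $2N'+1$, roughly), each $\le N$; the odd-indexed parts interlace $\mu$. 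So $g_N$ factors essentially as a product of two $q$-Gaussian-polynomial-type generating functions, one carrying $x^{+1}$ per part and one carrying $x^{-1}$ per part, which is precisely the structure of the first (double-sum) formula: the index $i$ counts the size of the ``inner'' even-indexed partition, $\qbinom{N}{i}_{q^4}$ counts it, $x^{2i}q^{2i}$ is its weight, and the two $(-xq;q^4)$, $(-x^{-1}q;q^4)$ factors count the odd parts sitting above and below. This is the approach I would actually carry out: set up the interlacing bijection, write the product, and recognize the two factors.

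The second equality—that this double sum equals $\frac{1}{(q^2;q^2)_{2N+v}}\sum_j x^jq^j\qbinom{2N+v}{j}_{q^2}$—is then a $q$-series identity not involving partitions, and I would prove it by the $q$-binomial theorem \eqref{eq:q-Bin}: expand each finite product $(-xq;q^4)_{N-i+v}$ and $(-x^{-1}q;q^4)_i$ via \eqref{eq:q-Bin} with base $q^4$, collect powers of $x$, and use the $q$-Vandermonde-type identity (a consequence of iterating \eqref{binomtri1}--\eqref{binomtri2}) to resum the inner index $i$ against $\qbinom{N}{i}_{q^4}$ into a single $\qbinom{2N+v}{j}_{q^2}$. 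The clean bookkeeping is that $\qbinom{2N+v}{j}_{q^2}$ splits, under the even/odd refinement of its Gaussian-polynomial combinatorics, exactly as the $i$-sum prescribes; this is Andrews-style and routine once set up. Since this identity is attributed also to \cite{zeng}, I may simply cite it, or give the short $q$-binomial-theorem derivation.

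\textbf{Main obstacle.} The delicate point is the sign/parity bookkeeping in $|\lambda|_{\alt}$ when passing to multiplicities or to the interlacing decomposition: the sign attached to the value-$i$ block depends on the parity of the total multiplicity of all larger parts, and one must verify that, after the split into odd-indexed and even-indexed subpartitions, the odd-indexed parts genuinely get weight $x^{+1}$ and the even-indexed ones $x^{-1}$ (or vice versa) \emph{uniformly}, with the boundary cases $v=0$ versus $v=1$ (i.e. $N$ even versus odd) accounting for the asymmetry $(-xq;q^4)_{N-i+v}$ vs.\ $(-x^{-1}q;q^4)_i$. Getting these two base cases and the interlacing endpoints exactly right is where I expect to spend the real effort; the subsequent $q$-binomial resummation is mechanical.
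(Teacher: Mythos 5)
Your plan is workable in outline, but it diverges from the paper in a way worth spelling out, and one of its two branches has a concrete defect. The paper does not prove the evaluation of $g_{2N+v}(x,q)$ at all: that is cited from Berkovich--Uncu and Ishikawa--Zeng. Its entire proof is the observation that the two displayed right-hand sides coincide, which is precisely the Rogers--Szeg\H{o} identity $H_n(t,q)=\sum_j t^j\qbinom{n}{j}=\sum_r t^{2r}(-q/t;q^2)_r(-t;q^2)_{\lceil n/2\rceil-r}\qbinom{\lfloor n/2\rfloor}{r}_{q^2}$ of \cite[eq.~(8.3)]{BW2005} specialized at $q\mapsto q^2$, $t\mapsto xq$, $n=2N+v$. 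So the step you describe as ``mechanical'' ($q$-binomial expansion plus a Vandermonde resummation) is the only step the paper addresses, and it does so by citation; your direct derivation, or a citation of the same identity, is a legitimate substitute. What you are additionally attempting --- a from-scratch proof that $g_{2N+v}$ equals either sum --- goes beyond the paper, and there the cleaner of your two sub-routes is the right one, though not in the form you wrote it: conjugation turns $|\lambda|_{\alt}$ into the number of odd parts of $\lambda^{\trans}$, and a partition with at most $2N+v$ parts and exactly $j$ odd parts splits uniquely and \emph{independently} into its $j$ odd parts (generating function $q^j/(q^2;q^2)_j$) and its at most $2N+v-j$ even parts (generating function $1/(q^2;q^2)_{2N+v-j}$); summing over $j$ gives the single-sum form with no sign or parity bookkeeping whatsoever.

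The branch you say you ``would actually carry out'' --- reading the double-sum formula off an interlacing decomposition into odd- and even-indexed parts --- will not close as described. The even-indexed parts of $\lambda$ form a partition whose \emph{part sizes} are bounded by $2N+v$ but whose length is unbounded, so they are not ``a partition with $\le N'$ parts''; if $i$ were the size of that partition its weight would be $(q/x)^i$, not $x^{2i}q^{2i}$; and $(-x^{-1}q;q^4)_i$ generates distinct parts congruent to $1$ modulo $4$, not an arbitrary even-indexed subpartition. The term-by-term dictionary you propose therefore does not exist, and the parity/sign analysis you flag as the ``main obstacle'' is an obstacle only because this decomposition is the wrong one. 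Prove the single-sum form by the odd/even-part split above, and obtain the double-sum form from it via the Rogers--Szeg\H{o} identity (or your $q$-binomial resummation); that completes the theorem and is, modulo the citation, what the paper does.
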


\begin{proof}
	The last equality follows from the identities for the Rogers--Szeg\H{o} polynomial $H_n(t,q)$:
	\begin{align*}
		H_n(t,q)=\sum_{j=0}^{n} t^j \begin{bmatrix} n\\ j\end{bmatrix} =\sum_{r=0}^{\lfloor n/2\rfloor} t^{2r} (-q/t;q^2)_r (-t;q^2)_{\lceil n/2 \rceil -r} \begin{bmatrix} \lfloor n/2 \rfloor \\ r \end{bmatrix}_{q^2}.
	\end{align*}
	See \cite[eq.~(8.3)]{BW2005}.
\end{proof}

\subsubsection{The $a=2$ case}\label{sec:a=2}

For a strict partition $\pi$, let
\begin{align*}
	\delta(\pi)&=(\ell(\pi), \ell(\pi)-1,\ldots, 1)\\
	\sigma(\pi)&=(\pi_1-\ell(\pi), \pi_2-\ell(\pi)+1,\ldots, \pi_{\ell(\pi)} -1)^{\trans}.
\end{align*}
Then, it is clear that
$$
\pi=\delta(\pi)+ \sigma(\pi)^{\trans}. 
$$
By the definition of residues with $t_1=t_2=0$, we see that
$$
\omega(\pi)=(-1)^{\ell(\pi)-1}\bigg( \bigg\lceil \frac{ \ell(\pi)}{2} \bigg\rceil - |\sigma(\pi)|_{\alt} \bigg).
$$
For example, we consider $\pi=(9,7,6,3)$. Then
\begin{align*}
	\delta(\pi)&=(4,3,2,1),\\
	\sigma(\pi)&=(5,4,4,2)^{\trans}=(4,4,3,3,1),
\end{align*}
and
\begin{align*}
	\omega(\pi) = 12-13=-1 &= (-1)^{4-1}\big(2 - (4-4+3-3+1)\big)\\
	&=(-1)^{\ell(\pi)-1} \bigg(\bigg\lceil \frac{ \ell(\pi)}{2} \bigg\rceil - |\sigma(\pi)|_{\alt}\bigg).
\end{align*}
In Figure~\ref{fig1}, the shifted Young diagram of $\pi$ with residues is given.  The thick vertical line separates $\delta(\pi)$ and $ \sigma(\pi)^{\trans}$. 
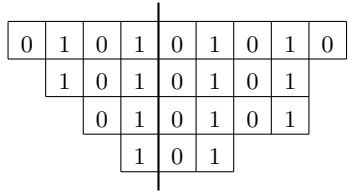
\begin{figure}[ht]
       \centering
	\begin{tikzpicture}[scale=1.0]
		\draw[black] (0, 1) -- (4.5, 1);
		\draw[black] (0, 0.5) -- (4.5, 0.5);
		\draw[black] (0.5, 0) -- (4, 0);
		\draw[black] (1, -0.5) -- (4, -0.5);
		\draw[black] (1.5, -1) -- (3, -1);
		
		\draw[black] (0, 1) -- (0, 0.5);
		\draw[black] (0.5, 1) -- (0.5, 0);
		\draw[black] (1, 1) -- (1, -0.5);
		\draw[black] (1.5, 1) -- (1.5, -1);
		\draw[black] (2, 1) -- (2, -1);
		\draw[black] (2.5, 1) -- (2.5, -1);
		\draw[black] (3, 1) -- (3, -1);
		\draw[black] (3.5, 1) -- (3.5, -0.5);
		\draw[black] (4, 1) -- (4, -0.5);
		\draw[black] (4.5, 1) -- (4.5, 0.5);
		
		\draw[thick,black] (2, 1.25) -- (2, -1.25);
		
		\draw (0.25,0.7) node{\footnotesize{$0$}};
		\draw (0.75,0.7) node{\footnotesize{$1$}};
		\draw (0.75,0.2) node{\footnotesize{$1$}};
		\draw (1.25,0.7) node{\footnotesize{$0$}};
		\draw (1.25,0.2) node{\footnotesize{$0$}};
		\draw (1.25,-0.3) node{\footnotesize{$0$}};
		\draw (1.75,0.7) node{\footnotesize{$1$}};
		\draw (1.75,0.2) node{\footnotesize{$1$}};
		\draw (1.75,-0.3) node{\footnotesize{$1$}};
		\draw (1.75,-0.8) node{\footnotesize{$1$}};
		
		\draw (2.25,0.7) node{\footnotesize{$0$}};
		\draw (2.25,0.2) node{\footnotesize{$0$}};
		\draw (2.25,-0.3) node{\footnotesize{$0$}};
		\draw (2.25,-0.8) node{\footnotesize{$0$}};
		
		\draw (2.75,0.7) node{\footnotesize{$1$}};
		\draw (2.75,0.2) node{\footnotesize{$1$}};
		\draw (2.75,-0.3) node{\footnotesize{$1$}};
		\draw (2.75,-0.8) node{\footnotesize{$1$}};
		
		\draw (3.25,0.7) node{\footnotesize{$0$}};
		\draw (3.25,0.2) node{\footnotesize{$0$}};
		\draw (3.25,-0.3) node{\footnotesize{$0$}};

		\draw (3.75,0.7) node{\footnotesize{$1$}};
		\draw (3.75,0.2) node{\footnotesize{$1$}};
		\draw (3.75,-0.3) node{\footnotesize{$1$}};
		
		\draw (4.25,0.7) node{\footnotesize{$0$}};
		
	\end{tikzpicture}
	 \caption{The strict partition $\pi=(9, 7, 6, 3)$}
    \label{fig1}
\end{figure} 

\noindent Hence, for $\pi=(\pi^{(1)}, \pi^{(2)})\in \Lambda^{2,2}$, 
$$
\omega(\pi)= \omega(\pi^{(1)})+\omega(\pi^{(2)})=\omega(\pi^{(1)})+  (-1)^{\ell(\pi^{(2)})-1} \bigg(\bigg\lceil \frac{ \ell(\pi^{(2)} )}{2} \bigg\rceil - |\sigma(\pi^{(2)})|_{\alt}\bigg).
$$
It follows that 
\begin{align}\label{eq:Lambda^{2,2}-gf-0}
	G_2(x,q)&:=\sum_{\omega=-\infty}^{\infty} \sum_{n \ge 0} |\Lambda_{\omega}^{2,2}(n )| \, x^{\omega} q^n \notag \\
	&=\sum_{(\pi^{(1)},\pi^{(2)})\in \Lambda^{2,2}} x^{\omega(\pi^{(1)})+\omega(\pi^{(2)})} q^{|\pi^{(1)}|+|\pi^{(2)}|}\notag\\
	&= \sum_{n\ge 0}  \sum_{(\sigma, \nu)} x^{(-1)^{n-1}\big(\lceil{n/2 \rceil - |\sigma|_{\alt}}\big)+\omega(\nu)} q^{n(n+1)/2+|\sigma|+|\nu|},
\end{align}
where  the inner sum in the last line is over all $(\sigma, \nu)$ with $\sigma$ an ordinary partition into parts $\le n$ and $\nu$ a strict partition into parts $\le n$. It turns out that this generating function $G_2(x,q)$ can be expressed as follows.

\begin{theorem}\label{th:Lambda^{2,2}-sum}
	We have
	\begin{align}
		&\sum_{\omega=-\infty}^{\infty} \sum_{n \ge 0} |\Lambda_{\omega}^{2,2}(n )| \, x^{\omega} q^n \notag\\
		&=\sum_{n\ge 0} \frac{x^{-n} q^{n(2n+1)}}{(q^2;q^2)_{2n}}  \sum_{j=0}^{2n} x^j q^j  \begin{bmatrix} 2n \\ j \end{bmatrix}_{q^2} \sum_{k=-n}^{n} x^{k} q^{2k^2-k}  \begin{bmatrix} 2n \\ n+k \end{bmatrix}_{q^2} \notag\\
		&\quad + \sum_{n\ge 0} \frac{x^{n+1} q^{(n+1)(2n+1)}}{(q^2;q^2)_{2n+1}} \sum_{j=0}^{2n+1} x^{-j} q^j \begin{bmatrix} 2n+1\\ j \end{bmatrix}_{q^2} \sum_{k=-n}^{n+1} x^k q^{2k^2-k} \begin{bmatrix} 2n +1 \\ n+k\end{bmatrix}_{q^2}.
	\end{align}
\end{theorem}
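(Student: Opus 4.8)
\emph{Proof proposal.} The plan is to derive the identity directly from the closed form for $G_2(x,q)$ already established in \eqref{eq:Lambda^{2,2}-gf-0}, namely
\[
G_2(x,q)=\sum_{n\ge 0}\ \sum_{(\sigma,\nu)}\ x^{(-1)^{n-1}\left(\lceil n/2\rceil-|\sigma|_{\alt}\right)+\omega(\nu)}\,q^{n(n+1)/2+|\sigma|+|\nu|},
\]
where, for each fixed $n$, the pair $(\sigma,\nu)$ runs over the \emph{product} set consisting of an ordinary partition $\sigma$ with parts $\le n$ and a strict partition $\nu$ with parts $\le n$, with no coupling condition between the two. The key structural point is therefore that this inner double sum factors. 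Writing $(-1)^{n-1}\bigl(\lceil n/2\rceil-|\sigma|_{\alt}\bigr)=(-1)^{n-1}\lceil n/2\rceil+(-1)^{n}|\sigma|_{\alt}$ and pulling the summand-independent factor out, I would rewrite
\[
G_2(x,q)=\sum_{n\ge 0}x^{(-1)^{n-1}\lceil n/2\rceil}\,q^{n(n+1)/2}\,\Bigl(\sum_{\sigma}x^{(-1)^{n}|\sigma|_{\alt}}q^{|\sigma|}\Bigr)\Bigl(\sum_{\nu}x^{\omega(\nu)}q^{|\nu|}\Bigr),
\]
so that it only remains to evaluate the two one-variable generating functions in parentheses.

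The first parenthesis is, by definition, $g_n(x,q)$ when $n$ is even and $g_n(1/x,q)$ when $n$ is odd; in either case Theorem~\ref{zeng} gives the closed form $\frac{1}{(q^2;q^2)_n}\sum_{j=0}^{n}x^{(-1)^{n}j}q^{j}\qbinom{n}{j}_{q^2}$. For the second parenthesis, the one genuinely combinatorial ingredient is the identification of $\omega(\nu)$, for a strict partition $\nu$ (recall $t_1=t_2=0$ when $a=2$), with the number of odd-indexed odd parts of $\nu$ minus the number of even-indexed odd parts: in the $i$-th row of $Y_\nu$ the residues $(j-i)\bmod 2$ alternate, so that row changes $C_0(\nu)-C_1(\nu)$ by $(-1)^{i-1}$ exactly when $\nu_i$ is odd and by $0$ otherwise; summing over $i$ gives the claim (this is also consistent with the formula for $\omega$ in terms of $\sigma(\nu)$ recorded in Section~\ref{sec:a=2}). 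Granting this, the definitions of $d_n$ and $B_n(k,q)$ yield $\sum_{\nu}x^{\omega(\nu)}q^{|\nu|}=\sum_{k}B_n(k,q)x^{k}=f_n(x,q)$, the sum being over strict $\nu$ with parts $\le n$, and Theorem~\ref{berkovich2} evaluates this as $\sum_{k}x^{k}q^{2k^2-k}\qbinom{n}{\lfloor n/2\rfloor+k}_{q^2}$, a finite sum since the $q$-binomial vanishes unless $0\le\lfloor n/2\rfloor+k\le n$.

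To finish, I would split the outer sum over $n$ according to parity and collect terms. For $n=2\ell$ one has $(-1)^{n-1}\lceil n/2\rceil=-\ell$, $n(n+1)/2=\ell(2\ell+1)$, $(-1)^n=1$, and $\lfloor n/2\rfloor=\ell$; for $n=2\ell+1$ one has $(-1)^{n-1}\lceil n/2\rceil=\ell+1$, $n(n+1)/2=(\ell+1)(2\ell+1)$, $(-1)^n=-1$, and $\lfloor n/2\rfloor=\ell$. Substituting the two evaluations above, collecting the prefactors, and renaming $\ell$ back to $n$ turns the even-$n$ part into the first sum on the right-hand side of the theorem and the odd-$n$ part into the second. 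I do not expect a serious obstacle here: the factorization is legitimate precisely because the interlacing condition defining $\Lambda^{2,2}$ has already been absorbed into the ``parts $\le n$'' constraints of \eqref{eq:Lambda^{2,2}-gf-0}, and the identification of $\omega(\nu)$ with Berkovich's ``odd-indexed minus even-indexed odd parts'' statistic (so that Theorems~\ref{zeng} and~\ref{berkovich2} both become applicable) is the only nontrivial step; the remaining work is bookkeeping, the main risk being sign and parity slips in the $x$-exponents, which I would track carefully through both cases.
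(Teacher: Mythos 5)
Your proposal is correct and follows essentially the same route as the paper: the paper's proof likewise factors the inner sum of \eqref{eq:Lambda^{2,2}-gf-0} into $g_{2n}(x,q)f_{2n}(x,q)$ and $x^{n+1}q^{(n+1)(2n+1)}g_{2n+1}(x^{-1},q)f_{2n+1}(x,q)$ and then invokes Theorems~\ref{berkovich2} and~\ref{zeng}. The only difference is that you explicitly verify the identification of $\omega(\nu)$ for a strict partition $\nu$ with the ``odd-indexed minus even-indexed odd parts'' statistic underlying $B_N(k,q)$, a step the paper leaves implicit; that verification is correct and worth recording.
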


\begin{proof}
	Note that by \eqref{eq:Lambda^{2,2}-gf-0},
	\begin{align*}
		&\sum_{\omega=-\infty}^{\infty} \sum_{n \ge 0} |\Lambda_{\omega}^{2,2}(n)| x^{\omega} q^n \\
		&=\sum_{n\ge 0} x^{-n} q^{n(2n+1)} g_{2n}(x,q)  f_{2n}(x,q) +\sum_{n\ge 0} x^{n+1} q^{(n+1)(2n+1)}  g_{2n+1}(x^{-1},q) f_{2n+1}(x,q).
	\end{align*}
	Making use of Theorems \ref{berkovich2} and \ref{zeng} gives the desired result.
\end{proof}

We compute the coefficient of $x^{\omega}$: 
\begin{align}
	[x^{\omega}]G_2(x,q)
	&=\sum_{n\ge 0} \sum_{k=-\infty}^{\infty} \frac{q^{2(n^2+n+k^2-k) +\omega}}{(q^2;q^2)_{2n}} \begin{bmatrix} 2n\\ n-k +\omega \end{bmatrix}_{q^2}  \begin{bmatrix} 2n\\ n+k \end{bmatrix}_{q^2} \notag \\
	& \quad +\sum_{n\ge 0} \sum_{k=-\infty}^{\infty} \frac{q^{2((n+1)^2+k^2)-\omega}}{(q^2;q^2)_{2n+1}} \begin{bmatrix}  2n+1 \\ n-k+\omega \end{bmatrix} _{q^2} \begin{bmatrix} 2n+1 \\ n+k\end{bmatrix}_{q^2}. \label{g2omega}
\end{align}

\subsubsection{The $a=1$ case}\label{sec:a=1}

For $\pi=(\pi^{(1)}, \pi^{(2)})  \in \Lambda^{1,2} $, since ${\bf t}=(t_1, t_2)=(0,1)$, the analysis on the residue of $\pi^{(1)}$ from the previous section on $\Lambda^{2,2}$ is still valid; while the residue of each node of $\pi^{(2)}$ has changed by $1$ mod 2 due to $t_2=1$. Thus, for the weighted generating function for $\pi^{(2)}$, we have to replace the variable $x$ by $x^{-1}$. 
It follows that 
\begin{align}\label{eq:Lambda^{1,2}-gf-0}
	G_1(x,q)&:=\sum_{\omega=-\infty}^{\infty} \sum_{n \ge 0} |\Lambda_{\omega}^{1,2}(n)|\, x^{\omega} q^n \notag \\ 
	&=\sum_{(\pi^{(1)},\pi^{(2)})\in \Lambda^{1,2}} x^{\omega(\pi^{(1)})+\omega(\pi^{(2)})} q^{|\pi^{(1)}|+|\pi^{(2)}|}\notag\\
	&= \sum_{n\ge 0}  \sum_{(\sigma, \nu)} x^{(-1)^{n}\big(\lceil{n/2 \rceil - |\sigma|_{\alt}}\big)+ \omega(\nu)} q^{n(n+1)/2+|\sigma|+|\nu|},
\end{align}
where  the inner sum in the last line is over all $(\sigma, \nu)$ with $\sigma$ an ordinary partition into parts $\le n$ and $\nu$ a strict partition into parts $\le n+1$. Thus, this generating function $G_1(x,q)$ can be expressed as follows.

\begin{theorem}\label{th:Lambda^{1,2}-sum}
	We have
	\begin{align}
		&\sum_{\omega=-\infty}^{\infty} \sum_{n \ge 0} |\Lambda_{\omega}^{1,2}(n)| x^{\omega} q^n \notag\\
		&=\sum_{n\ge 0} \frac{x^{n} q^{n(2n+1)}}{(q^2;q^2)_{2n}}  \sum_{j=0}^{2n} x^{-j} q^j  \begin{bmatrix} 2n \\ j \end{bmatrix}_{q^2} \sum_{k=-n}^{n+1} x^{k} q^{2k^2-k}  \begin{bmatrix} 2n +1\\ n+k \end{bmatrix}_{q^2} \notag\\
		&\quad + \sum_{n\ge 0} \frac{x^{-(n+1)} q^{(n+1)(2n+1)}}{(q^2;q^2)_{2n+1}} \sum_{j=0}^{2n+1} x^{j} q^j \begin{bmatrix} 2n+1\\ j \end{bmatrix}_{q^2} \sum_{k=-n-1}^{n+1} x^{k} q^{2k^2-k} \begin{bmatrix} 2n +2 \\ n+1+k\end{bmatrix}_{q^2}.
	\end{align}
\end{theorem}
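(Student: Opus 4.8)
The plan is to mirror the proof of Theorem~\ref{th:Lambda^{2,2}-sum} almost verbatim, with the roles of the two components adjusted for the shift ${\bf t}=(0,1)$. The starting point is the generating function decomposition~\eqref{eq:Lambda^{1,2}-gf-0}, which already factors the sum over $\pi=(\pi^{(1)},\pi^{(2)})\in\Lambda^{1,2}$ according to $\ell:=\ell(\pi^{(2)})$. For fixed $\ell$, the conjugate-restriction condition~(ii) reads $\pi_1^{(1)}\le \ell(\pi^{(2)})+(t_2-t_1)=\ell+1$, so $\pi^{(1)}$ ranges over strict partitions with parts $\le \ell+1$, while $\pi^{(2)}$, after stripping the staircase $\delta(\pi^{(2)})$, contributes an ordinary partition $\sigma$ with parts $\le \ell$ weighted by its alternating sum. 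The only change from the $a=2$ case is the parity sign on the exponent of $x$ coming from $\pi^{(2)}$ (now $(-1)^{\ell}$ rather than $(-1)^{\ell-1}$) and the bound $\ell+1$ in place of $\ell$ for $\pi^{(1)}$.

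First I would rewrite \eqref{eq:Lambda^{1,2}-gf-0}, splitting the sum according to the parity of $\ell$: writing $\ell=2n$ or $\ell=2n+1$, one gets
\begin{align*}
G_1(x,q)=\sum_{n\ge 0} x^{n} q^{n(2n+1)}\, g_{2n}(x^{-1},q)\, f_{2n+1}(x,q) +\sum_{n\ge 0} x^{-(n+1)} q^{(n+1)(2n+1)}\, g_{2n+1}(x,q)\, f_{2n+2}(x,q),
\end{align*}
where the prefactors $q^{n(2n+1)}=q^{\ell(\ell+1)/2}$ come from $|\delta(\pi^{(2)})|$ and the powers $x^{\pm}$ from the terms $(-1)^{\ell}\lceil \ell/2\rceil$; the $\sigma$-partition with parts $\le \ell$ is tracked by $g_\ell(x^{\mp 1},q)$ (the inverse coming from the sign) and the strict partition $\pi^{(1)}$ with parts $\le \ell+1$ is tracked by $f_{\ell+1}(x,q)$, recalling that $\sum_\pi x^{\omega(\pi)}q^{|\pi|}$ over strict partitions with parts $\le N$ is exactly $f_N(x,q)=\sum_k B_N(k,q)x^k$ by the definition of $C_0-C_1$ for a single strict partition.

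Then I would substitute the closed forms: Theorem~\ref{zeng} for $g_{2n}$ and $g_{2n+1}$ (the second line of that theorem, in terms of Gaussian polynomials base $q^2$), and Theorem~\ref{berkovich2} for $f_{2n+1}$ and $f_{2n+2}$ (i.e.\ $f_N(x,q)=\sum_k x^k q^{2k^2-k}\qbinom{N}{\lfloor N/2\rfloor+k}_{q^2}$, with ranges of $k$ dictated by nonvanishing of the $q$-binomial). For $f_{2n+1}$ the index runs $-n\le k\le n+1$, and for $f_{2n+2}$ it runs $-n-1\le k\le n+1$; for $g_\ell$ the $j$-sum runs $0\le j\le \ell$. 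Collecting the four pieces and relabeling gives precisely the two displayed sums in the statement. The one point requiring care—the main obstacle, such as it is—is bookkeeping the sign conventions: one must verify that replacing $x\mapsto x^{-1}$ in the $g_\ell$ factor (because $\pi^{(2)}$'s residues are shifted by $t_2=1$, so its $\omega$-contribution flips) is applied to the correct factor and that the residual power $x^{\pm n}$, $x^{\mp(n+1)}$ matches $(-1)^{\ell}\lceil\ell/2\rceil$ with the right overall sign; a single check of a small case (e.g.\ $n\le 1$, or cross-checking against the $a=2$ computation) suffices to pin this down. Everything else is a direct transcription of the $\Lambda^{2,2}$ argument.
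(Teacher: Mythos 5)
Your proposal is correct and is essentially identical to the paper's proof: the paper likewise splits \eqref{eq:Lambda^{1,2}-gf-0} by the parity of $\ell(\pi^{(2)})$ to obtain $G_1(x,q)=\sum_{n\ge 0} x^{n} q^{n(2n+1)} g_{2n}(x^{-1},q)f_{2n+1}(x,q)+\sum_{n\ge 0} x^{-(n+1)} q^{(n+1)(2n+1)}g_{2n+1}(x,q) f_{2n+2}(x,q)$ and then substitutes Theorems~\ref{berkovich2} and~\ref{zeng}. Your sign/bound bookkeeping (the $x^{-1}$ in the $g_{2n}$ factor, the bound $\ell+1$ for $\pi^{(1)}$, and the index ranges for $j$ and $k$) all check out against the stated formula.
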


\begin{proof}
	Note that by \eqref{eq:Lambda^{1,2}-gf-0},
	\begin{align*}
		&\sum_{\omega=-\infty}^{\infty} \sum_{n \ge 0} |\Lambda_{\omega}^{1,2}(n )| x^{\omega} q^n \\
		&=\sum_{n\ge 0} x^{n} q^{n(2n+1)} g_{2n}(x^{-1},q)  f_{2n+1}(x,q) +\sum_{n\ge 0} x^{-(n+1)} q^{(n+1)(2n+1)}  g_{2n+1}(x,q) f_{2n+2}(x,q).
	\end{align*}
	Making use of Theorems \ref{berkovich2} and \ref{zeng} gives the desired result.
\end{proof}

We now compute the coefficient of $x^{\omega}$: 
\begin{align}
	[x^{\omega}]G_1(x,q)
	&=\sum_{n\ge 0} \sum_{k=-\infty}^{\infty} \frac{q^{2(n^2+n+k^2) +\omega}}{(q^2;q^2)_{2n}} \begin{bmatrix} 2n\\ n+k - \omega \end{bmatrix}_{q^2}  \begin{bmatrix} 2n+1\\ n+k \end{bmatrix}_{q^2} \notag \\
	& \quad  +\sum_{n\ge 0} \sum_{k=-\infty}^{\infty} \frac{q^{2((n+1)^2+k^2-k)-\omega}}{(q^2;q^2)_{2n+1}} \begin{bmatrix}  2n+1 \\ n+k-\omega \end{bmatrix} _{q^2} \begin{bmatrix} 2n+2 \\ n+1+k\end{bmatrix}_{q^2}. \label{g1omega}
\end{align}


\section{The bivariate generating functions for $\Lambda^{1,2}$ and $\Lambda^{2,2}$}\label{sec:bivariate}

In Theorems \ref{th:Lambda^{2,2}-sum} and \ref{th:Lambda^{1,2}-sum}, we have expressed the bivariate generating functions for $\Lambda^{1,2}$ and $\Lambda^{2,2}$ as triple $q$-summations.  In this section, we will simplify these triple summations and hence provide a proof of Theorem~\ref{thm:1.2}. 

For notational convenience, we rewrite the identities in the theorem using $q$-Pochhammer symbols as follows:
	\begin{align}\label{eq:Lambda12-bi-gf}
		\sum_{\omega=-\infty}^\infty \sum_{n\ge 0} |\Lambda_{\omega}^{1,2}(n)| x^{\omega}q^{n} = \frac{(-q^2;q^2)_{\infty}(-xq,-x^{-1}q,q^2;q^2)_\infty}{(q^2;q^2)_{\infty}},
	\end{align}
and
	\begin{align}\label{eq:Lambda22-bi-gf}
		\sum_{\omega=-\infty}^\infty \sum_{n\ge 0} |\Lambda_{\omega}^{2,2}(n)| x^{\omega}q^{n} &= \frac{(-q;q^2)_\infty(-x,-x^{-1}q^2,q^2;q^2)_\infty}{2(q^2;q^2)_\infty}+\frac{(q;q^2)_\infty(x,x^{-1}q^2,q^2;q^2)_\infty}{2(q^2;q^2)_\infty}.
	\end{align}

Our proof  relies on some basic hypergeometric series manipulations of the coefficients in \eqref{g2omega} and \eqref{g1omega}. Before we give the proof, we discuss some applications of these identities. 

First of all, they lead us to the following interesting basic hypergeometric series identities.

\begin{corollary}
	We have
	\begin{align*}
		\sum_{r,s\ge 0} \frac{q^{r^2+s^2+r+s}(q^2;q^2)_{r+s+1}}{(q^2;q^2)_r (q^2;q^2)_r (q^2;q^2)_s (q^2;q^2)_{s+1}}=\frac{(-q^2;q^2)_{\infty}}{(q^2;q^2)_{\infty}}.
	\end{align*}
\end{corollary}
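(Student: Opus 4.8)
The plan is to compute the coefficient of $x^{0}$ in the bivariate generating function $G_{1}(x,q)=\sum_{\omega}\sum_{n\ge 0}|\Lambda^{1,2}_{\omega}(n)|x^{\omega}q^{n}$ in two different ways. For the closed form, I start from \eqref{eq:Lambda12-bi-gf} and apply Jacobi's triple product \eqref{eq:JTP} with $q\mapsto q^{2}$ and $z\mapsto -xq$, which gives $(-xq,-x^{-1}q,q^{2};q^{2})_{\infty}=\sum_{k\in\mathbb{Z}}x^{k}q^{k^{2}}$ and hence
\[
G_{1}(x,q)=\frac{(-q^{2};q^{2})_{\infty}}{(q^{2};q^{2})_{\infty}}\sum_{k\in\mathbb{Z}}x^{k}q^{k^{2}},\qquad\text{so}\qquad [x^{0}]G_{1}(x,q)=\frac{(-q^{2};q^{2})_{\infty}}{(q^{2};q^{2})_{\infty}}.
\]
(Equivalently, this is \eqref{number1} at $\omega=0$.) This is the right-hand side of the claimed identity.

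For the other evaluation, I set $\omega=0$ in \eqref{g1omega}, which writes $[x^{0}]G_{1}(x,q)$ as $A+B$ with
\[
A=\sum_{n\ge 0}\sum_{k}\frac{q^{2(n^{2}+n+k^{2})}}{(q^{2};q^{2})_{2n}}\qbinom{2n}{n+k}_{q^{2}}\qbinom{2n+1}{n+k}_{q^{2}},\quad B=\sum_{n\ge 0}\sum_{k}\frac{q^{2((n+1)^{2}+k^{2}-k)}}{(q^{2};q^{2})_{2n+1}}\qbinom{2n+1}{n+k}_{q^{2}}\qbinom{2n+2}{n+1+k}_{q^{2}}.
\]
In $A$ I substitute $(r,s)=(n+k,\,n-k)$; since the Gaussian binomials vanish unless $0\le n+k\le 2n$, this is a bijection onto $\{(r,s):r,s\ge 0,\ r+s\text{ even}\}$ with $n=(r+s)/2$. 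One checks $2(n^{2}+n+k^{2})=r(r+1)+s(s+1)$, that the product of binomials becomes $\qbinom{r+s}{r}_{q^{2}}\qbinom{r+s+1}{r}_{q^{2}}$, and, using $\tfrac{1}{(q^{2};q^{2})_{r+s}}\qbinom{r+s}{r}_{q^{2}}=\tfrac{1}{(q^{2};q^{2})_{r}(q^{2};q^{2})_{s}}$, that the $A$-summand reduces to $\tfrac{q^{r(r+1)+s(s+1)}}{(q^{2};q^{2})_{r}(q^{2};q^{2})_{s}}\qbinom{r+s+1}{r}_{q^{2}}$. In $B$ I substitute $(r,s)=(n+1-k,\,n+k)$ (using $\qbinom{2n+1}{n+k}_{q^{2}}=\qbinom{2n+1}{n+1-k}_{q^{2}}$); the admissible range of $k$ makes this a bijection onto $\{(r,s):r,s\ge 0,\ r+s\text{ odd}\}$, and the analogous computation shows the $B$-summand reduces to the same expression. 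Adding the two and using $\qbinom{r+s+1}{r}_{q^{2}}=(q^{2};q^{2})_{r+s+1}/\big((q^{2};q^{2})_{r}(q^{2};q^{2})_{s+1}\big)$,
\[
A+B=\sum_{r,s\ge 0}\frac{q^{r(r+1)+s(s+1)}}{(q^{2};q^{2})_{r}(q^{2};q^{2})_{s}}\qbinom{r+s+1}{r}_{q^{2}}=\sum_{r,s\ge 0}\frac{q^{r^{2}+s^{2}+r+s}(q^{2};q^{2})_{r+s+1}}{(q^{2};q^{2})_{r}^{2}(q^{2};q^{2})_{s}(q^{2};q^{2})_{s+1}},
\]
which is the left-hand side of the identity. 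Comparing the two evaluations of $[x^{0}]G_{1}(x,q)$ completes the proof.

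The only step that is not purely mechanical is the re-indexing: one must verify that the two substitutions really are bijections onto the even-sum and odd-sum halves of $\{(r,s):r,s\ge 0\}$ (this is exactly where one must track the admissible range of $k$ in \eqref{g1omega}) and that the $q$-exponents and products of Gaussian binomials transform as stated; after that the identity is immediate. As an aside, feeding the Euler expansion $(-xq;q^{2})_{\infty}=\sum_{m\ge 0}q^{m^{2}}x^{m}/(q^{2};q^{2})_{m}$ into the same $[x^{0}]$-extraction instead recovers the classical Durfee-square identity $\sum_{m\ge 0}q^{2m^{2}}/(q^{2};q^{2})_{m}^{2}=1/(q^{2};q^{2})_{\infty}$, but that route is not needed here.
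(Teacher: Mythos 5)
Your proposal is correct and follows essentially the same route as the paper: the paper also obtains this identity by extracting the coefficient of $x^{0}$ from \eqref{eq:Lambda12-bi-gf}, using the re-indexed form \eqref{eq:G1-omega-rs} (which is exactly your substitutions $(r,s)=(n+k,n-k)$ and $(r,s)=(n-k+1,n+k)$) for the summation side and Jacobi's triple product \eqref{eq:JTP} for the product side. Your verification of the re-indexing bijections and the Pochhammer rewriting fills in precisely the mechanical details the paper leaves to the reader.
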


\begin{proof}
	This result corresponds to the coefficient of $x^0$ in \eqref{eq:Lambda12-bi-gf}. Note that for the summation side, we may use \eqref{eq:G1-omega-rs}, and rewrite the $q$-binomial notation as $q$-Pochhammer symbols. For the product side, we apply Jacobi's triple product identity in \eqref{eq:JTP}. 
\end{proof}

\begin{corollary}
	We have
	\begin{align*}
		& \sum_{r,s\ge 0} \frac{q^{r^2+s^2+2s}(q^2;q^2)_{r+s}}{(q^2;q^2)_r(q^2;q^2)_s (q^2;q^2)_r (q^2;q^2)_s}
		+ \sum_{r,s\ge 0} \frac{q^{r^2+s^2+2r+2s+2}(q^2;q^2)_{r+s+1}}{(q^2;q^2)_{r}(q^2;q^2)_{r+1} (q^2;q^2)_{s}(q^2;q^2)_{s+1}}\\
		&\quad = \frac{(-q;q^2)_{\infty}}{(q^2;q^2)_{\infty}}.
	\end{align*}
\end{corollary}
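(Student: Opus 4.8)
The plan is to identify the left-hand side with $\bigl([x^0]+[x^1]\bigr)G_2(x,q)$, the sum of the coefficients of $x^0$ and $x^1$ in $G_2(x,q)=\sum_{\omega}\sum_{n\ge 0}|\Lambda_{\omega}^{2,2}(n)|\,x^\omega q^n$, and then to evaluate this sum via the product formula \eqref{eq:Lambda22-bi-gf}.

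For the summation side, I would start from the explicit expansion \eqref{g2omega} of $[x^\omega]G_2$ and specialize it to $\omega=0$ and $\omega=1$; each is the sum of a ``$(q^2;q^2)_{2n}$-series'' and a ``$(q^2;q^2)_{2n+1}$-series''. In each of the four resulting double sums I make a substitution of the form $(r,s)=(n+k,\,n-k)$ or $(r,s)=(n+k,\,n-k+1)$ --- in one or two cases followed by a unit shift of both indices and/or the relabeling $r\leftrightarrow s$ --- use the symmetry $\qbinom{N}{j}_{q^2}=\qbinom{N}{N-j}_{q^2}$, and rewrite the $q$-binomials as $q$-Pochhammer symbols. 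The ``$(q^2;q^2)_{2n}$''-halves yield summands with $r+s$ even and the ``$(q^2;q^2)_{2n+1}$''-halves yield summands with $r+s$ odd. One then checks that the $\omega=0$ ``$(q^2;q^2)_{2n}$''-series and the $\omega=1$ ``$(q^2;q^2)_{2n+1}$''-series reassemble into $\sum_{r,s\ge 0}\frac{q^{r^2+s^2+2s}(q^2;q^2)_{r+s}}{(q^2;q^2)_r^2(q^2;q^2)_s^2}$ (the two parity classes of $r+s$ together covering all pairs $r,s\ge 0$), and likewise that the $\omega=0$ ``$(q^2;q^2)_{2n+1}$''-series and the $\omega=1$ ``$(q^2;q^2)_{2n}$''-series reassemble into $\sum_{r,s\ge 0}\frac{q^{r^2+s^2+2r+2s+2}(q^2;q^2)_{r+s+1}}{(q^2;q^2)_r(q^2;q^2)_{r+1}(q^2;q^2)_s(q^2;q^2)_{s+1}}$. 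Adding the two, the left-hand side of the corollary is precisely $\bigl([x^0]+[x^1]\bigr)G_2(x,q)$.

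For the product side, I would invoke Theorem~\ref{thm:1.2} in the form \eqref{eq:Lambda22-bi-gf} and apply Jacobi's triple product identity \eqref{eq:JTP} with base $q^2$ to the two theta-products: $(q^2,-x,-q^2/x;q^2)_\infty=\sum_{m\in\mathbb{Z}}x^m q^{m^2-m}$ and $(q^2,x,q^2/x;q^2)_\infty=\sum_{m\in\mathbb{Z}}(-1)^m x^m q^{m^2-m}$. Thus the $x^0$- and $x^1$-coefficients of the first theta-product are both $1$, whereas those of the second are $1$ and $-1$; so the $(q;q^2)_\infty$-term contributes nothing to $[x^0]+[x^1]$, and \eqref{eq:Lambda22-bi-gf} gives $\bigl([x^0]+[x^1]\bigr)G_2(x,q)=\frac{(-q;q^2)_\infty}{2(q^2;q^2)_\infty}(1+1)=\frac{(-q;q^2)_\infty}{(q^2;q^2)_\infty}$, which is the asserted value.

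The step I expect to be the main obstacle is the index bookkeeping in the second paragraph: the substitutions must be arranged so that the quadratic exponents come out to be exactly $r^2+s^2+2s$ and $r^2+s^2+2r+2s+2$, and so that the four double sums pair off correctly according to the parity of $r+s$. This is mechanical but error-prone, and the places where the $q$-binomial symmetry, the unit shift, and the swap $r\leftrightarrow s$ are invoked must be tracked carefully.
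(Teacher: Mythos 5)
Your proposal is correct and is essentially the paper's own proof: the paper likewise obtains the summation side by adding the $\omega=0$ and $\omega=1$ coefficients extracted via \eqref{eq:G2-omega-rs} (the $r\equiv s$ and $r\not\equiv s$ halves recombining into the two double sums after the shift $r\mapsto r+1$ in the second), and evaluates $[x^0]+[x^1]$ of the right-hand side of \eqref{eq:Lambda22-bi-gf} by Jacobi's triple product, with the $(q;q^2)_\infty$-term cancelling. No substantive difference.
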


\begin{proof}
	This result follows by summing the coefficients of $x^0$ and $x^1$ in \eqref{eq:Lambda22-bi-gf}. Note that for the summation side, we make use of \eqref{eq:G2-omega-rs}, and also rewrite the $q$-binomial coefficients as $q$-Pochhammer symbols. For the product side, we apply the Jacobi triple product identity in \eqref{eq:JTP}. 
\end{proof}

Furthermore, it immediately follows from \eqref{eq:Lambda12-bi-gf} and \eqref{eq:Lambda22-bi-gf} that
\begin{equation}
\sum_{n\ge 0} |\Lambda^{1,2} (n)| q^n = (-q;q)_{\infty} (-q;q^2)_{\infty} \label{g_1(1)}
\end{equation}
and
\begin{equation}
\sum_{n\ge 0} |\Lambda^{2,2}( n)| q^n = (-q;q)_{\infty} (-q^2;q^2)_{\infty}.  \label{q_2(1)}
\end{equation}

Also, since $\omega(\pi)\equiv n \pmod{2},$ if we sum over $\omega \equiv n \pmod{2}$, each of the generating function looks succinct.  
\begin{itemize} [leftmargin=*,align=left]
\renewcommand{\labelitemi}{$\triangleright$}
\item The $a=1$ case: 
\medskip
\begin{itemize}[leftmargin=*,align=left]
	\item[$\triangleright$] If $n \equiv 0 \bmod{2}$,  then
	\begin{align*}
		|\Lambda^{1,2}(n) |= \sum_{ \omega \equiv 0 \bmod{2}} | \Lambda^{1,2}_{\omega}(n)|.
	\end{align*}
	Hence,
	\begin{align*}
\sum_{\substack{n \ge 0 \\ n \equiv 0 \bmod{2}}}  | \Lambda^{1,2}(n)| q^n
&=\frac{1}{2}\left( (-q;q^2)^2_{\infty} (-q^2;q^2)_{\infty}+(q;q^2)^2_{\infty} (-q^2;q^2)_{\infty} \right) \\
&=\frac{1}{2} \left((-q;q^2)^2_{\infty} +(q;q^2)^2_{\infty}\right) (-q^2;q^2)_{\infty}\\
&=\frac{(-q^2;q^2)_{\infty} (-q^4; q^8)_{\infty}^2 (q^8;q^8)_{\infty}  }{(q^2;q^2)_{\infty}}.
\end{align*}

\item[$\triangleright$] If $n \equiv 1 \bmod{2}$, then 
\begin{align*}
|\Lambda^{1,2}(n) |= \sum_{ \omega \equiv 1 \bmod{2}} | \Lambda^{1,2}_{\omega}(n)|.
\end{align*}
Hence,
\begin{align*}
\sum_{\substack{ n \ge 0 \\ n \equiv 1 \bmod{2}}}  | \Lambda^{1,2}(n)| q^n
&=\frac{1}{2}\left( (-q;q^2)_{\infty}^2(-q^2;q^2)_{\infty}- (q;q^2)_{\infty}^2(-q^2;q^2)_{\infty} \right) \\
&=\frac{1}{2} \left((-q;q^2)^2_{\infty} - (q;q^2)^2_{\infty}\right) (-q^2;q^2)_{\infty}\\
&=\frac{2q (-q^2;q^2)_{\infty} (q^{16};q^{16})_{\infty}^2}{(q^2;q^2)_{\infty} (q^8;q^8)_{\infty}}.
\end{align*}
\end{itemize}

\begin{remark}
The last equality in each of the above generating functions can be derived as follows. We first note that
$$
(q;q^2)_{\infty}^2=\frac{(q;q)_\infty^2}{(q^2;q^2)_{\infty}^2}.
$$
Therefore, we may evaluate $(-q;q^2)_{\infty}^2 \pm (q;q^2)_{\infty}^2$ by recalling the 2-dissection formula of Ramanujan's classical theta function $\phi(-q)$ \cite[p. 49, Entry 25]{Ber1991}:
\begin{align*}
    \phi(-q):=\sum_{n=-\infty}^{\infty} (-1)^n q^{n^2} = \frac{(q;q)_\infty^2}{(q^2;q^2)_{\infty}} = \frac{(q^8;q^8)_{\infty}^5}{(q^4;q^4)^2_{\infty} (q^{16};q^{16})_{\infty}^2} - \frac{2q(q^{16};q^{16})_{\infty}^2}{(q^8;q^8)_{\infty}}.
\end{align*}
\end{remark}

\item The $a=2$ case:
\medskip
\begin{itemize}[leftmargin=*,align=left]
	\item[$\triangleright$] If $n \equiv 0 \bmod{2}$, then
	\begin{align*}
		|\Lambda^{2,2}(n) |= \sum_{ \omega \equiv 0 \bmod{2}} | \Lambda^{2,2}_{\omega}(n)|.
	\end{align*}
	Hence,
	\begin{align*}
	\sum_{\substack{n\ge 0 \\ n \equiv 0 \bmod{2}}}  | \Lambda^{2,2}(n)| q^n 
		&=\frac{1}{2}\left((-q;q^2)_{\infty} (-q^2;q^2)^2_{\infty}+(q;q^2)_{\infty} (-q^2;q^2)^2_{\infty} \right)\\
		&=\frac{1}{2}\left((-q;q^2)_{\infty} +(q;q^2)_{\infty}\right) (-q^2;q^2)_{\infty}^2 \\
		&=\frac{(-q^2;q^2)_{\infty}(-q^6,-q^{10}, q^{16};q^{16})_{\infty}}{(q^2;q^2)_{\infty}}.
	\end{align*}
	
	\item[$\triangleright$] If $n \equiv 1 \bmod{2}$,  then
	\begin{align*}
		|\Lambda^{2,2}(n) |= \sum_{\omega \equiv 1 \bmod{2}} | \Lambda^{2,2}_{\omega}(n)|.
	\end{align*}
	Hence,
	\begin{align*}
		\sum_{\substack{n \ge 0 \\ n \equiv 1 \bmod{2}}}  | \Lambda^{2,2}(n)| q^n 
		&=\frac{1}{2}\left((-q;q^2)_{\infty} (-q^2;q^2)^2_{\infty}- (q;q^2)_{\infty} (-q^2;q^2)^2_{\infty} \right)\\
		&=\frac{1}{2} \left((-q;q^2)_{\infty} - (q;q^2)_{\infty}\right) (-q^2;q^2)_{\infty}^2 \\
		&=\frac{q (-q^2;q^2)_{\infty}(-q^2,-q^{14}, q^{16};q^{16})_{\infty}}{(q^2;q^2)_{\infty}}.
	\end{align*}
\end{itemize}

\begin{remark}
	The last equality in each of the above generating functions can be derived as follows. First, note that
	\begin{align*}
		(q;q^2)_\infty = \frac{(q;q)_\infty}{(q^2;q^2)_\infty}.
	\end{align*}
	Therefore, we may evaluate $(-q;q^2)_{\infty} \pm (q;q^2)_{\infty}$ by recalling the 2-dissection formula of $(q;q)_\infty$ \cite[p.~332, (34.10.2)]{Hir2017}:
	\begin{align*}
		(q;q)_\infty &= (q^{12},q^{20};q^{32})_\infty (q^2,q^{14},q^{16};q^{16})_\infty\\
		&\quad - q (q^{4},q^{28};q^{32})_\infty (q^6,q^{10},q^{16};q^{16})_\infty.
	\end{align*}
\end{remark}
\end{itemize}

\subsection{Proof of \eqref{eq:Lambda12-bi-gf}}

Recall that
\begin{align*}
	G_1(x):=G_1(x,q)=\sum_{\omega=-\infty}^\infty \sum_{n\ge 0} |\Lambda_{\omega}^{1,2}(n)| x^{\omega}q^{n}.
\end{align*}
In \eqref{g1omega}, letting $r=n+k$ and $s=n-k$ in the first double summation and letting $r=n-k+1$ and $s=n+k$ in the second double summation, we have
\begin{align}\label{eq:G1-omega-rs}
	[x^{\omega}]G_1(x)
	&= \sum_{\substack{r,s=-\infty\\ r\equiv s \bmod 2}}^\infty \frac{q^{r^2+r+s^2+s-\omega}}{(q^2;q^2)_{r+s}}\qbinom{r+s}{r-\omega}_{q^2} \qbinom{r+s+1}{r}_{q^2}\notag\\
	&\quad+ \sum_{\substack{r,s=-\infty\\ r\not\equiv s \bmod 2}}^\infty \frac{q^{r^2+r+s^2+s+\omega}}{(q^2;q^2)_{r+s}}\qbinom{r+s}{r+\omega}_{q^2} \qbinom{r+s+1}{r}_{q^2}.
\end{align}

First, we deduce from \eqref{eq:G1-omega-rs} that
\begin{align}
	&G_1(x)+G_1(x^{-1})\notag\\
	&\qquad=\sum_{\omega=-\infty}^\infty \big(x^{\omega}+x^{-\omega}\big)\sum_{r,s=-\infty}^\infty \frac{q^{r^2+r+s^2+s+\omega}}{(q^2;q^2)_{r+s}}\qbinom{r+s}{r+\omega}_{q^2} \qbinom{r+s+1}{r}_{q^2}.
\end{align}
Throughout, we introduce a family of auxiliary series
\begin{align}
	H^+_\omega=H^+_\omega(q):=\sum_{r,s=-\infty}^\infty \frac{q^{r^2+r+s^2+s+\omega}}{(q^2;q^2)_{r+s}}\qbinom{r+s}{r+\omega}_{q^2} \qbinom{r+s+1}{r}_{q^2}.
\end{align}
Then,
\begin{align*}
	H^+_\omega &= \sum_{s=-\infty}^\infty \frac{q^{s^2+s+\omega}}{(q^2;q^2)_{s-\omega} (q^2;q^2)_{s+1}}\sum_{r\ge 0} \frac{q^{r^2+r}(q^2;q^2)_{r+s+1}}{(q^2;q^2)_{r+\omega} (q^2;q^2)_r}\\
	&= \lim_{u\to 1}\sum_{s=-\infty}^\infty \frac{q^{s^2+s+\omega}}{(q^2;q^2)_{s-\omega} (uq^2;q^2)_{\omega}} \sum_{r\ge 0}\frac{q^{2\binom{r}{2}+2r}(q^{4+2s};q^2)_r}{(uq^{2+2\omega};q^2)_r (q^2;q^2)_r}\\
	&= \lim_{u\to 1}\sum_{s=-\infty}^\infty \frac{q^{s^2+s+\omega}}{(q^2;q^2)_{s-\omega} (uq^2;q^2)_{\omega}} \lim_{\tau\to 0} {}_{2}\phi_{1}\left(\begin{matrix} q^{4+2s},1/\tau\\ uq^{2+2\omega} \end{matrix}; q^2, -q^2 \tau\right)\\
	\text{\tiny (by \eqref{eq:Heine-2})}&=\frac{(-q^2;q^2)_\infty}{(q^2;q^2)_\infty}\sum_{s=-\infty}^\infty \frac{q^{s^2+s+\omega}}{(q^2;q^2)_{s-\omega}} \lim_{\tau\to 0} {}_{2}\phi_{1}\left(\begin{matrix} -q^{4+2s-2\omega},1/\tau\\ -q^{2} \end{matrix}; q^2, q^{2+2\omega} \tau\right)\\
	&=\frac{(-q^2;q^2)_\infty}{(q^2;q^2)_\infty}\sum_{r,s=-\infty}^\infty \frac{q^{s^2+s+\omega}}{(q^2;q^2)_{s-\omega}} \frac{(-1)^r q^{r^2+r+2r\omega}(-q^{4+2s-2\omega};q^2)_r}{(q^4;q^4)_r}.
\end{align*}
Notice that we add the limit $u\to 1$ to make our computation more rigorous by avoiding the zero denominator in $(q^{2+2\omega};q^2)_r$ when $\omega<0$. However, since this step is more or less straightforward, it will be omitted in our subsequent calculations. Now, letting $s\mapsto s+\omega-1$ in the above yields
\begin{align*}
	H^+_{\omega} &= \frac{q^{\omega^2}(-q^2;q^2)_\infty}{(q^2;q^2)_\infty} \sum_{r,s\ge 0} \frac{(-1)^r q^{r^2+r+s^2-s+2r\omega+2s\omega}(1-q^{2s})(-q^{2};q^2)_{r+s}}{(q^4;q^4)_r(q^4;q^4)_s}\\
	&= \frac{q^{\omega^2}(-q^2;q^2)_\infty}{(q^2;q^2)_\infty} \sum_{M\ge 1} \frac{q^{2M\omega}}{(q^2;q^2)_M}\sum_{\substack{r,s\ge 0\\ r+s=M}} (-1)^r q^{r^2+r+s^2-s}(1-q^{2s}) \qbinom{M}{r}_{q^4}.
\end{align*}
Notice that for $M\ge 1$,
\begin{align*}
	&\sum_{\substack{r,s\ge 0\\ r+s=M}} (-1)^r q^{r^2+r+s^2-s}(1-q^{2s}) \qbinom{M}{r}_{q^4}\\
	&=q^{M^2-M}\sum_{r\ge 0}(-1)^r q^{4\binom{r}{2}+(4-2M)r} \qbinom{M}{r}_{q^4}\\
	&\quad-q^{M^2+M}\sum_{r\ge 0}(-1)^r q^{4\binom{r}{2}+(2-2M)r} \qbinom{M}{r}_{q^4}\\
	\text{\tiny (by \eqref{eq:q-Bin})}&=q^{M^2-M} (q^{4-2M};q^4)_M-q^{M^2+M}(q^{2-2M};q^4)_M\\
	&=\begin{cases}
		-(-1)^N q^{2N^2+2N} (q^2;q^4)_N^2, & \text{if $M=2N$},\\
		(-1)^N q^{2N^2+2N} (q^2;q^4)_N(q^2;q^4)_{N+1}, & \text{if $M=2N+1$}.
	\end{cases}
\end{align*}
Thus,
\begin{align*}
	H^+_\omega= \frac{q^{\omega^2}(-q^2;q^2)_\infty}{(q^2;q^2)_\infty}\left(1-(1-q^{2\omega})\sum_{N\ge 0}\frac{(-1)^N q^{2N^2+2N+4N\omega} (q^2;q^4)_N}{(q^4;q^4)_N}\right).
\end{align*}
Now, we rewrite the summation over $N$ as follows,
\begin{align*}
	\sum_{N\ge 0}\frac{(-1)^N q^{2N^2+2N+4N\omega} (q^2;q^4)_N}{(q^4;q^4)_N}&=\lim_{\tau\to 0} {}_{2}\phi_{1}\left(\begin{matrix} q^2,1/\tau\\ \tau \end{matrix}; q^4, q^{4+4\omega} \tau\right)\\
	\text{\tiny (by \eqref{eq:Heine-2})}&= (q^{4+4\omega};q^4)_\infty \lim_{\tau\to 0} {}_{2}\phi_{1}\left(\begin{matrix} q^{6+4\omega}/\tau,1/\tau\\ q^{4+4\omega} \end{matrix}; q^4, \tau^2\right)\\
	&=(q^{4+4\omega};q^4)_\infty \sum_{N=-\infty}^\infty \frac{q^{4N^2+2N+4N\omega}}{(q^4;q^4)_N (q^{4+4\omega};q^4)_N}\\
	&=(q^4;q^4)_\infty \sum_{N=-\infty}^\infty \frac{q^{4N^2+2N+4N\omega}}{(q^4;q^4)_N (q^4;q^4)_{N+\omega}}.
\end{align*}
Thus,
\begin{align*}
	H^+_\omega = \frac{q^{\omega^2}(-q^2;q^2)_\infty}{(q^2;q^2)_\infty}\left(1-(q^4;q^4)_\infty (1-q^{2\omega}) \sum_{N=-\infty}^\infty \frac{q^{4N^2+2N+4N\omega}}{(q^4;q^4)_N (q^4;q^4)_{N+\omega}}\right).
\end{align*}
Also,
\begin{align*}
	H^+_{-\omega}&=\frac{q^{\omega^2}(-q^2;q^2)_\infty}{(q^2;q^2)_\infty}\left(1-(q^4;q^4)_\infty (1-q^{-2\omega}) \sum_{N=-\infty}^\infty \frac{q^{4N^2+2N-4N\omega}}{(q^4;q^4)_N (q^4;q^4)_{N-\omega}}\right)\\
	&=\frac{q^{\omega^2}(-q^2;q^2)_\infty}{(q^2;q^2)_\infty}\left(1-(q^4;q^4)_\infty (1-q^{-2\omega}) \sum_{N=-\infty}^\infty \frac{q^{4(N+\omega)^2+2(N+\omega)-4(N+\omega)\omega}}{(q^4;q^4)_{N+\omega} (q^4;q^4)_{N}}\right)\\
	&=\frac{q^{\omega^2}(-q^2;q^2)_\infty}{(q^2;q^2)_\infty}\left(1-(q^4;q^4)_\infty (q^{2\omega}-1) \sum_{N=-\infty}^\infty \frac{q^{4N^2+2N+4N\omega}}{(q^4;q^4)_N (q^4;q^4)_{N+\omega}}\right).
\end{align*}
We conclude that for any integer $\omega$,
\begin{align}\label{eq:H-relation}
	H^+_\omega+H^+_{-\omega} = \frac{2q^{\omega^2}(-q^2;q^2)_\infty}{(q^2;q^2)_\infty}.
\end{align}
Therefore,
\begin{align}\label{eq:G1-sum}
	G_1(x)+G_1(x^{-1}) &= \sum_{\omega=-\infty}^\infty \big(x^{\omega}+x^{-\omega}\big) H^+_{\omega}\notag\\
	&=\sum_{\omega=-\infty}^\infty x^{\omega} \big(H^+_{\omega}+H^+_{-\omega}\big) \notag\\
	\text{\tiny (by \eqref{eq:H-relation})}&=\sum_{\omega=-\infty}^\infty x^{\omega} \cdot  \frac{2q^{\omega^2}(-q^2;q^2)_\infty}{(q^2;q^2)_\infty}\notag\\
	\text{\tiny (by \eqref{eq:JTP})}&=\frac{2(-q^2;q^2)_{\infty}(-xq,-x^{-1}q,q^2;q^2)_\infty}{(q^2;q^2)_{\infty}}.
\end{align}

On the other hand, it also follows from \eqref{eq:G1-omega-rs} that
\begin{align}
	&G_1(x)-G_1(x^{-1})\notag\\
	&\qquad=\sum_{\omega=-\infty}^\infty \big(x^{\omega}-x^{-\omega}\big)\sum_{r,s=-\infty}^\infty \frac{(-1)^{r+s}q^{r^2+r+s^2+s+\omega}}{(q^2;q^2)_{r+s}}\qbinom{r+s}{r+\omega}_{q^2} \qbinom{r+s+1}{r}_{q^2}.
\end{align}
Now, we define
\begin{align}
	H^-_\omega=H^-_\omega(q):=\sum_{r,s=-\infty}^\infty \frac{(-1)^{r+s}q^{r^2+r+s^2+s+\omega}}{(q^2;q^2)_{r+s}}\qbinom{r+s}{r+\omega}_{q^2} \qbinom{r+s+1}{r}_{q^2}.
\end{align}
Then,
\begin{align*}
	H^-_\omega &= \sum_{s=-\infty}^\infty \frac{(-1)^s q^{s^2+s+\omega}}{(q^2;q^2)_{s-\omega} (q^2;q^2)_{s+1}}\sum_{r\ge 0} \frac{(-1)^r q^{r^2+r}(q^2;q^2)_{r+s+1}}{(q^2;q^2)_{r+\omega} (q^2;q^2)_r}\\
	&= \sum_{s=-\infty}^\infty \frac{(-1)^s q^{s^2+s+\omega}}{(q^2;q^2)_{s-\omega} (q^2;q^2)_{\omega}} \sum_{r\ge 0}\frac{(-1)^r q^{2\binom{r}{2}+2r}(q^{4+2s};q^2)_r}{(q^{2+2\omega};q^2)_r (q^2;q^2)_r}\\
	&= \sum_{s=-\infty}^\infty \frac{(-1)^s q^{s^2+s+\omega}}{(q^2;q^2)_{s-\omega} (q^2;q^2)_{\omega}} \lim_{\tau\to 0} {}_{2}\phi_{1}\left(\begin{matrix} q^{4+2s},1/\tau\\ q^{2+2\omega} \end{matrix}; q^2, q^2 \tau\right)\\
	\text{\tiny (by \eqref{eq:Heine-2})}&=\sum_{r,s=-\infty}^\infty \frac{(-1)^{r+s} q^{s^2+s+\omega}}{(q^2;q^2)_{s-\omega}} \frac{q^{r^2+r+2r\omega}(q^{4+2s-2\omega};q^2)_r}{(q^2;q^2)_r^2}\\
	\text{\tiny ($s\mapsto s+\omega-1$)}&= -(-1)^{\omega}q^{\omega^2} \sum_{r,s\ge 0} \frac{(-1)^{r+s} q^{r^2+r+s^2-s+2r\omega+2s\omega}(1-q^{2s})(q^{2};q^2)_{r+s}}{(q^2;q^2)_r^2(q^2;q^2)_s^2}.
\end{align*}
We also have
\begin{align*}
	H^-_{-\omega}&=\sum_{r=-\infty}^\infty \frac{(-1)^r q^{r^2+r-\omega}}{(q^2;q^2)_{r-\omega} (q^2;q^2)_r} \sum_{s\ge -1} \frac{(-1)^s q^{s^2+s} (q^2;q^2)_{r+s+1}}{(q^2;q^2)_{s+\omega} (q^2;q^2)_{s+1}}\\
	&=-\sum_{r=-\infty}^\infty \frac{(-1)^r q^{r^2+r-\omega}}{(q^2;q^2)_{r-\omega} (q^2;q^2)_r} \sum_{s\ge 0} \frac{(-1)^s q^{s^2-s} (q^2;q^2)_{r+s}}{(q^2;q^2)_{s+\omega-1} (q^2;q^2)_{s}}\\
	&=-\sum_{r=-\infty}^\infty \frac{(-1)^r q^{r^2+r-\omega}}{(q^2;q^2)_{r-\omega} (q^2;q^2)_{\omega-1}}  \lim_{\tau\to 0} {}_{2}\phi_{1}\left(\begin{matrix} q^{2+2r},1/\tau\\ q^{2\omega} \end{matrix}; q^2, \tau\right)\\
	\text{\tiny (by \eqref{eq:Heine-2})}&= -\sum_{r=-\infty}^\infty \frac{(-1)^r q^{r^2+r-\omega}}{(q^2;q^2)_{r-\omega}}\sum_{s\ge 1}\frac{(-1)^s q^{s^2-s+2s\omega} (q^{2+2r-2\omega};q^2)_s}{(q^2;q^2)_s(q^2;q^2)_{s-1}}\\
	&=-\sum_{r=-\infty}^\infty \frac{(-1)^r q^{r^2+r-\omega}}{(q^2;q^2)_{r-\omega}}\sum_{s\ge 0}\frac{(-1)^s q^{s^2-s+2s\omega} (1-q^{2s})(q^{2+2r-2\omega};q^2)_s}{(q^2;q^2)_s(q^2;q^2)_{s}}\\
	\text{\tiny ($r\mapsto r+\omega$)}&= -(-1)^{\omega}q^{\omega^2} \sum_{r,s\ge 0} \frac{(-1)^{r+s} q^{r^2+r+s^2-s+2r\omega+2s\omega}(1-q^{2s})(q^{2};q^2)_{r+s}}{(q^2;q^2)_r^2(q^2;q^2)_s^2}.
\end{align*}
Thus, for any integer $\omega$,
\begin{align}\label{eq:H^*-relation}
	H^-_\omega=H^-_{-\omega}.
\end{align}
We conclude that
\begin{align}\label{eq:G1-diff}
	G_1(x)-G_1(x^{-1})&= \sum_{\omega=-\infty}^\infty \big(x^{\omega}-x^{-\omega}\big) H^-_{\omega}\notag\\
	&=\sum_{\omega=-\infty}^\infty x^{\omega} \big(H^-_{\omega}-H^-_{-\omega}\big) \notag\\
	\text{\tiny (by \eqref{eq:H^*-relation})}&=0.
\end{align}

Finally, summing \eqref{eq:G1-sum} and \eqref{eq:G1-diff} yields \eqref{eq:Lambda12-bi-gf}. 

\subsection{Proof of \eqref{eq:Lambda22-bi-gf}}

Recall that
\begin{align*}
	G_2(x):=G_2(x,q)=\sum_{\omega=-\infty}^\infty \sum_{n\ge 0} |\Lambda_{\omega}^{2,2}(n)| x^{\omega}q^{n}.
\end{align*}
In \eqref{g2omega}, letting $r=n+k$ and $s=n-k$ in the first double summation and letting $r=n-k+1$ and $s=n+k$ in the second double summation, we have
\begin{align}\label{eq:G2-omega-rs}
	[x^{\omega}]G_2(x)
	&= \sum_{\substack{r,s=-\infty\\ r\equiv s \bmod 2}}^\infty \frac{q^{r^2+s^2+2s+\omega}}{(q^2;q^2)_{r+s}}\qbinom{r+s}{r-\omega}_{q^2} \qbinom{r+s}{r}_{q^2}\notag\\
	&\quad+ \sum_{\substack{r,s=-\infty\\ r\not\equiv s \bmod 2}}^\infty \frac{q^{r^2+s^2+2s+1-\omega}}{(q^2;q^2)_{r+s}}\qbinom{r+s}{r-1+\omega}_{q^2} \qbinom{r+s}{r}_{q^2}.
\end{align}

We start with the difference of $G_2(x)$ and $xG_2(x^{-1})$. It follows from \eqref{eq:G2-omega-rs} that
\begin{align}
	&G_2(x)-xG_2(x^{-1})\notag\\
	&\qquad=\sum_{\omega=-\infty}^\infty \big(x^{\omega}-x^{1-\omega}\big)\sum_{r,s=-\infty}^\infty \frac{(-1)^{r+s}q^{r^2+s^2+2s+\omega}}{(q^2;q^2)_{r+s}}\qbinom{r+s}{r-\omega}_{q^2} \qbinom{r+s}{r}_{q^2}.
\end{align}
We define
\begin{align}
	I^-_\omega=I^-_\omega(q):=\sum_{r,s=-\infty}^\infty \frac{(-1)^{r+s}q^{r^2+s^2+2s+\omega}}{(q^2;q^2)_{r+s}}\qbinom{r+s}{r-\omega}_{q^2} \qbinom{r+s}{r}_{q^2}.
\end{align}
Then,
\begin{align*}
	I^-_\omega &= \sum_{s=-\infty}^\infty \frac{(-1)^s q^{s^2+2s+\omega}}{(q^2;q^2)_{s+\omega} (q^2;q^2)_{s}}\sum_{r\ge 0} \frac{(-1)^r q^{r^2}(q^2;q^2)_{r+s}}{(q^2;q^2)_{r-\omega} (q^2;q^2)_r}\\
	&= \sum_{s=-\infty}^\infty \frac{(-1)^s q^{s^2+2s+\omega}}{(q^2;q^2)_{s+\omega} (q^2;q^2)_{-\omega}} \sum_{r\ge 0}\frac{(-1)^r q^{2\binom{r}{2}+r}(q^{2+2s};q^2)_r}{(q^{2-2\omega};q^2)_r (q^2;q^2)_r}\\
	&= \sum_{s=-\infty}^\infty \frac{(-1)^s q^{s^2+2s+\omega}}{(q^2;q^2)_{s+\omega} (q^2;q^2)_{-\omega}} \lim_{\tau\to 0} {}_{2}\phi_{1}\left(\begin{matrix} q^{2+2s},1/\tau\\ q^{2-2\omega} \end{matrix}; q^2, q \tau\right)\\
	\text{\tiny (by \eqref{eq:Heine-2})}&=\frac{(q;q^2)_\infty}{(q^2;q^2)_\infty}\sum_{s=-\infty}^\infty \frac{(-1)^s q^{s^2+2s+\omega}}{(q^2;q^2)_{s+\omega}} \lim_{\tau\to 0} {}_{2}\phi_{1}\left(\begin{matrix} q^{1+2s+2\omega},1/\tau\\ q \end{matrix}; q^2, q^{2-2\omega} \tau\right)\\
	&=\frac{(q;q^2)_\infty}{(q^2;q^2)_\infty}\sum_{r,s=-\infty}^\infty \frac{(-1)^s q^{s^2+2s+\omega}}{(q^2;q^2)_{s+\omega}} \frac{(-1)^r q^{r^2+r-2r\omega}(q^{1+2s+2\omega};q^2)_r}{(q^2;q^2)_r(q;q^2)_r}.
\end{align*}
Letting $s\mapsto s-\omega$ in the above yields
\begin{align*}
	I^-_{\omega} &= \frac{(-1)^{\omega}q^{\omega^2-\omega}(q;q^2)_\infty}{(q^2;q^2)_\infty} \sum_{r,s\ge 0} \frac{(-1)^{r+s} q^{r^2+r+s^2+2s-2r\omega-2s\omega}(q;q^2)_{r+s}}{(q^2;q^2)_r(q;q^2)_r(q^2;q^2)_s(q;q^2)_s}\\
	&=\frac{(-1)^{\omega}q^{\omega^2-\omega}(q;q^2)_\infty}{(q^2;q^2)_\infty} + \frac{(-1)^{\omega}q^{\omega^2-\omega}(q;q^2)_\infty}{(q^2;q^2)_\infty}\sum_{M\ge 1} (-1)^M q^{-2M\omega} (q;q^2)_M S_{1,M},
\end{align*}
where
\begin{align*}
	S_{1,M}:=\sum_{\substack{r,s\ge 0\\ r+s=M}} \frac{q^{r^2+r+s^2+2s}}{(q^2;q^2)_r(q;q^2)_r(q^2;q^2)_s(q;q^2)_s}.
\end{align*}
Similarly,
\begin{align*}
	I^-_{1-\omega}&= \sum_{r=-\infty}^\infty \frac{(-1)^r q^{r^2-\omega+1}}{(q^2;q^2)_{r+\omega-1} (q^2;q^2)_r}\sum_{s\ge 0} \frac{(-1)^s q^{s^2+2s}(q^2;q^2)_{r+s}}{(q^2;q^2)_{s-\omega+1} (q^2;q^2)_{s}}\\
	&= \sum_{r=-\infty}^\infty \frac{(-1)^r q^{r^2-\omega+1}}{(q^2;q^2)_{r+\omega-1} (q^2;q^2)_{1-\omega}} \sum_{s\ge 0} \frac{(-1)^s q^{2\binom{s}{2}+3s}(q^{2+2r};q^2)_s}{(q^{4-2\omega};q^2)_{s} (q^2;q^2)_{s}}\\
	&= \sum_{r=-\infty}^\infty \frac{(-1)^r q^{r^2-\omega+1}}{(q^2;q^2)_{r+\omega-1} (q^2;q^2)_{1-\omega}} \lim_{\tau\to 0} {}_{2}\phi_{1}\left(\begin{matrix} q^{2+2r},1/\tau\\ q^{4-2\omega} \end{matrix}; q^2, q^3 \tau\right)\\
	\text{\tiny (by \eqref{eq:Heine-2})}&=\frac{(q^3;q^2)_\infty}{(q^2;q^2)_\infty}\sum_{r=-\infty}^\infty \frac{(-1)^r q^{r^2-\omega+1}}{(q^2;q^2)_{r+\omega-1}} \lim_{\tau\to 0} {}_{2}\phi_{1}\left(\begin{matrix} q^{1+2r+2\omega},1/\tau\\ q^3 \end{matrix}; q^2, q^{4-2\omega} \tau\right)\\
	&=\frac{(q^3;q^2)_\infty}{(q^2;q^2)_\infty}\sum_{r,s=-\infty}^\infty \frac{(-1)^r q^{r^2-\omega+1}}{(q^2;q^2)_{r+\omega-1}} \frac{(-1)^s q^{s^2+3s-2s\omega}(q^{1+2r+2\omega};q^2)_s}{(q^2;q^2)_s(q^3;q^2)_s}\\
	&=\frac{(q;q^2)_\infty}{(q^2;q^2)_\infty}\sum_{r,s=-\infty}^\infty \frac{(-1)^r q^{r^2-\omega+1}}{(q^2;q^2)_{r+\omega-1}} \frac{(-1)^s q^{s^2+3s-2s\omega}(q^{1+2r+2\omega};q^2)_s}{(q^2;q^2)_s(q;q^2)_{s+1}}.
\end{align*}
Letting $r\mapsto r-\omega+1$ and $s\mapsto s-1$ in the above yields
\begin{align*}
	I^-_{1-\omega} &= \frac{(-1)^{\omega}q^{\omega^2-\omega}(q;q^2)_\infty}{(q^2;q^2)_\infty}\sum_{r,s\ge 0} \frac{(-1)^{r+s} q^{r^2+2r+s^2+s-2r\omega-2s\omega}(q;q^2)_{r+s}}{(q^2;q^2)_r(q;q^2)_{r+1}(q^2;q^2)_{s-1}(q;q^2)_s}\\
	&=\frac{(-1)^{\omega}q^{\omega^2-\omega}(q;q^2)_\infty}{(q^2;q^2)_\infty}\sum_{M\ge 1} (-1)^M q^{-2M\omega} (q;q^2)_M S_{2,M},
\end{align*}
where
\begin{align*}
S_{2,M}:=\sum_{\substack{r,s\ge 0\\ r+s=M}} \frac{q^{r^2+2r+s^2+s}}{(q^2;q^2)_r(q;q^2)_{r+1}(q^2;q^2)_{s-1}(q;q^2)_s}.
\end{align*}
We claim that for $M\ge 1$,
\begin{align}
	S_{1,M}=S_{2,M}=\frac{q^{M(M+3)/2}(-q;q)_{M-1}}{(q;q^2)_{M}(q;q)_M}.
\end{align}
To see this, we have
\begin{align*}
	S_{1,M}&=\sum_{\substack{r,s\ge 0\\ r+s=M}} \frac{q^{r^2+r+s^2+2s}}{(q^2;q^2)_r(q;q^2)_r(q^2;q^2)_s(q;q^2)_s}\\
	&=\frac{q^{M^2+2M}}{(q;q^2)_M(q^2;q^2)_M}{}_{2}\phi_{1}\left(\begin{matrix} q^{-2M},q^{1-2M}\\ q \end{matrix}; q^2, q^{2M}\right)\\
	\text{\tiny (by \eqref{71+})}&=\frac{q^{M^2+2M}}{(q;q^2)_M(q^2;q^2)_M}\frac{(-q^{-M},q^M;q)_\infty}{2(q^{2M};q^2)_\infty}\\
	&=\frac{q^{M(M+3)/2}(-q;q)_{M-1}}{(q;q^2)_{M}(q;q)_M},
\end{align*}
and
\begin{align*}
	S_{2,M}&=\sum_{\substack{r,s\ge 0\\ r+s=M}} \frac{q^{r^2+2r+s^2+s}}{(q^2;q^2)_r(q;q^2)_{r+1}(q^2;q^2)_{s-1}(q;q^2)_s}\\
	&=\lim_{u\to 1}\frac{1-u}{1-q}\sum_{\substack{r,s\ge 0\\ r+s=M}} \frac{q^{r^2+2r+s^2+s}}{(q^2;q^2)_r(q^3;q^2)_{r}(u;q^2)_{s}(q;q^2)_s}\\
	&=\frac{q^{M^2+M}}{(1-q)(q;q^2)_M(q^2;q^2)_{M-1}}{}_{2}\phi_{1}\left(\begin{matrix} q^{2-2M},q^{1-2M}\\ q^3 \end{matrix}; q^2, q^{2M}\right)\\
	\text{\tiny (by \eqref{71+})}&=\frac{q^{M^2+M}}{(q;q^2)_M(q^2;q^2)_{M-1}}\frac{(-q^{1-M},q^{1+M};q)_\infty}{2(q^{2M};q^2)_\infty}\\
	&=\frac{q^{M(M+3)/2}(-q;q)_{M-1}}{(q;q^2)_{M}(q;q)_M},
\end{align*}
thereby confirming the claim. It follows that for any integer $\omega$,
\begin{align}\label{eq:I^*-relation}
	I^-_{\omega}-I^-_{1-\omega}=\frac{(-1)^{\omega}q^{\omega^2-\omega}(q;q^2)_\infty}{(q^2;q^2)_\infty}.
\end{align}
We conclude that
\begin{align}\label{eq:G2-diff}
	G_2(x)-xG_2(x^{-1}) &= \sum_{\omega=-\infty}^\infty \big(x^{\omega}-x^{1-\omega}\big) I^-_{\omega}\notag\\
	&=\sum_{\omega=-\infty}^\infty x^{\omega} \big(I^-_{\omega}-I^-_{1-\omega}\big) \notag\\
	\text{\tiny (by \eqref{eq:I^*-relation})}&=\sum_{\omega=-\infty}^\infty x^{\omega} \cdot  \frac{(-1)^{\omega}q^{\omega^2-\omega}(q;q^2)_\infty}{(q^2;q^2)_\infty}\notag\\
	\text{\tiny (by \eqref{eq:JTP})} &= \frac{(q;q^2)_\infty(x,x^{-1}q^2,q^2;q^2)_\infty}{(q^2;q^2)_\infty}.
\end{align}

On the other hand, we deduce from \eqref{eq:G2-omega-rs} that
\begin{align}
	&G_2(x)+xG_2(x^{-1})\notag\\
	&\qquad=\sum_{\omega=-\infty}^\infty \big(x^{\omega}+x^{1-\omega}\big)\sum_{r,s=-\infty}^\infty \frac{q^{r^2+s^2+2s+\omega}}{(q^2;q^2)_{r+s}}\qbinom{r+s}{r-\omega}_{q^2} \qbinom{r+s}{r}_{q^2}.
\end{align}
We define
\begin{align}
	I^+_\omega=I^+_\omega(q):=\sum_{r,s=-\infty}^\infty \frac{q^{r^2+s^2+2s+\omega}}{(q^2;q^2)_{r+s}}\qbinom{r+s}{r-\omega}_{q^2} \qbinom{r+s}{r}_{q^2}.
\end{align}
Note that
\begin{align*}
    I^+_\omega(q) = (-1)^\omega I^-_\omega(-q).
\end{align*}
It follows that for any integer $\omega$,
\begin{align}\label{eq:I-relation}
	I^+_{\omega}+I^+_{1-\omega}&=(-1)^\omega I^-_\omega(-q) + (-1)^{1-\omega} I^-_{1-\omega}(-q)\notag\\
	&=(-1)^{\omega}\big(I^-_\omega(-q)-I^-_{1-\omega}(-q)\big)\notag\\
	\text{\tiny (by \eqref{eq:I^*-relation})}&=\frac{q^{\omega^2-\omega}(-q;q^2)_\infty}{(q^2;q^2)_\infty}.
\end{align}
We conclude that
\begin{align}\label{eq:G2-sum}
	G_2(x)+xG_2(x^{-1}) &= \sum_{\omega=-\infty}^\infty \big(x^{\omega}+x^{1-\omega}\big) I^+_{\omega}\notag\\
	&=\sum_{\omega=-\infty}^\infty x^{\omega} \big(I^+_{\omega}+I^+_{1-\omega}\big) \notag\\
	\text{\tiny (by \eqref{eq:I-relation})}&=\sum_{\omega=-\infty}^\infty x^{\omega} \cdot  \frac{q^{\omega^2-\omega}(-q;q^2)_\infty}{(q^2;q^2)_\infty}\notag\\
	\text{\tiny (by \eqref{eq:JTP})} &= \frac{(-q;q^2)_\infty(-x,-x^{-1}q^2,q^2;q^2)_\infty}{(q^2;q^2)_\infty}.
\end{align}

Finally, summing \eqref{eq:G2-diff} and \eqref{eq:G2-sum} yields \eqref{eq:Lambda22-bi-gf}. 

\subsection{Proof of Corollary~\ref{coro:1.4}}

For notational convenience, we rewrite the identities in the corollary as follows: 
\begin{equation}\label{coro-number1}
\sum_{n\geq 0}|\Lambda_\omega^{1,2}(n)|q^n=\frac{q^{\omega^2}(-q^2;q^2)_\infty}{(q^2;q^2)_\infty}
\end{equation}
and
\begin{equation}\label{coro-number2}
\sum_{n\geq 0}|\Lambda_\omega^{2,2}(n)|q^n=\frac{q^{\omega^2-\omega}(-q;q^2)_\infty}{(q^2;q^2)_\infty}.
\end{equation}
If we expand the right hand side of \eqref{eq:Lambda12-bi-gf} using Jacobi's triple product identity \eqref{eq:JTP}, then it becomes
\begin{equation*}
\frac{(-q^2;q^2)_{\infty}}{(q^2;q^2)_{\infty}} \sum_{\omega =-\infty}^{\infty} x^{\omega} q^{\omega^2}.
\end{equation*}
Equating the coefficient of $x^{\omega}$, we deduce \eqref{coro-number1}. 

Similarly, \eqref{coro-number2} follows from \eqref{eq:Lambda12-bi-gf} with an application of Jacobi's triple product identity \eqref{eq:JTP} to its right side and after some cancellations.  We omit the details.


\section{The generating function for general $\Lambda^{a,m}$}\label{sec5}

 In this section, we discuss the generating function for $\Lambda^{a,m}$. Our main goal is to give a $q$-series proof of Theorem~\ref{thm:AM}. 

We start by recalling the conditions for $\pi=(\pi^{(1)},\ldots, \pi^{(m)})\in \Lambda^{a,m }$ from Section~\ref{sec:2-restricted}:
\begin{enumerate}
	\item[(i)] each $\pi^{(i)}$ is a strict partition;
	\item[(ii)] $\pi^{(i)}_1\le \ell(\pi^{(i+1)})$ for $i\neq a$ and $\pi^{(a)}_1\le \ell(\pi^{(a+1)})+1$.  Here, if $a=m$, then $\ell(\pi^{(a+1)})=\infty$. 
\end{enumerate}

Our first observation is that the generating function for $m$-multipartitions in $\Lambda^{a,m}$ can be represented as a multisum. Moreover, although $\Lambda^{a,m}$ is defined for $m\ge 2$, the following result still holds for $m=1$, so we will assume that $m\ge 1$ throughout this section. In particular, $\Lambda^{1,1}$ is just the set of strict partitions, i.e., partitions into distinct parts. Another note to make is that we can even define $\Lambda^{0,m}$ by taking the sequence $$t_1=\cdots =t_m=1,$$ but this set is exactly $\Lambda^{m,m}$.

\begin{theorem}\label{th:gen-sum}
	For $m\ge 1$ and $0 \le a\le m$,
	\begin{align}
		&\sum_{n\ge 0} |\Lambda^{a,m}(n)| q^n \notag \\
		&=\sum_{N_1,\ldots, N_m\ge 0}\frac{q^{\sum_{i=1}^{m}\binom{N_i+1}{2}}}{(q;q)_{N_m}}\qbinom{N_2+\delta_{a+1,2}}{N_1}\qbinom{N_3+\delta_{a+1,3}}{N_2}\cdots \qbinom{N_{m}+\delta_{a+1,m}}{N_{m-1}}\label{gen-q-binomial}\\
		&=\sum_{\substack{N_m\ge N_{m-1}\ge \cdots \ge N_{a+1} \\ N_{a+1}+1\ge N_a\ge \cdots \ge N_2\ge 0}} \frac{q^{\sum_{i=2}^{m} \binom{N_i+1}{2}} (-q;q)_{N_{2+\delta_{a+1,2}}} \big(1-q^{N_{a+1}+1}\big) }{ \big(1-q^{N_{a+1}-N_{a}+1} \big) \prod_{i=2}^{m}(q;q)_{N_i-N_{i-1}}}. \label{gen1}
	\end{align}
	Here, $\delta$ is the Kronecker delta function. Also, in \eqref{gen1}, we take $N_1=0$ and $N_{m+1}=\infty$.
\end{theorem}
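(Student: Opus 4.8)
The plan is to translate the combinatorial description of $\Lambda^{a,m}$ directly into a generating function by encoding each strict component partition $\pi^{(i)}$ as a shifted Young diagram, and then carrying out the sum component by component from $i=1$ up to $i=m$. Recall that a strict partition $\mu$ with $\ell(\mu)=N$ parts, all of size $\le L$, is uniquely written as $\mu=\delta+\rho^{\trans}$ where $\delta=(N,N-1,\ldots,1)$ and $\rho$ is an ordinary partition with at most $N$ columns (equivalently parts $\le N$) and at most $L-N$ rows; hence its generating function, tracking the number of parts by an auxiliary variable and the size by $q$, is $q^{\binom{N+1}{2}}\qbinom{L}{N}_{q}$ once we restrict the largest part to be $\le L$ and then relax. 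Writing $N_i=\ell(\pi^{(i)})$, the two chain conditions are precisely: $\pi^{(i)}_1\le N_{i+1}$ for $i\ne a$ and $\pi^{(a)}_1\le N_{a+1}+1$, plus $N_m$ unbounded (when $a=m$ the last condition disappears and $\pi^{(m)}$ is an arbitrary strict partition with $\ell=N_m$). First I would fix $N_1,\ldots,N_m$, observe that for a fixed number of parts $N_i$ the largest part bound forces $\pi^{(i)}$ into a box of height $N_{i+1}+\delta_{a+1,i+1}$ (or height $\infty$ for $i=m$), and use the $q$-binomial box formula to get $q^{\binom{N_i+1}{2}}\qbinom{N_{i+1}+\delta_{a+1,i+1}}{N_i}_q$ for $i<m$ and $q^{\binom{N_m+1}{2}}/(q;q)_{N_m}$ for $i=m$. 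Multiplying over $i$ and summing over all $N_i\ge 0$ gives \eqref{gen-q-binomial}; note $N_1=0$ is irrelevant since $\qbinom{N_2+\delta}{0}=1$ and the $i=1$ factor is just $q^{0}=1$.

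Next I would pass from \eqref{gen-q-binomial} to \eqref{gen1}. The key structural point is that $\qbinom{N_{i+1}+\delta}{N_i}_q$ vanishes unless $N_i\le N_{i+1}+\delta$, so the multisum is automatically supported on the chain $N_{a+1}+1\ge N_a\ge N_{a-1}\ge\cdots\ge N_2\ge 0$ and $N_m\ge N_{m-1}\ge\cdots\ge N_{a+1}$ stated in \eqref{gen1}. On this support I would rewrite each $q$-binomial in Pochhammer form. For the "upper chain" $i>a$ the relevant factors telescope: $\prod_{i>a}\qbinom{N_{i+1}}{N_i}_q \cdot \frac{1}{(q;q)_{N_m}}$ collapses (using $\qbinom{A}{B}_q=(q;q)_A/((q;q)_B(q;q)_{A-B})$ and cancellation of the internal $(q;q)_{N_i}$) into $\big(\prod_{i>a}(q;q)_{N_i-N_{i-1}}\big)^{-1}$ times the single surviving numerator $(q;q)_{N_{a+1}}$ — wait, more precisely the cascade leaves $1/\prod_{i=a+1}^{m}(q;q)_{N_i-N_{i-1}}$ with the understanding that at the top $N_m$ is free. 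For the "lower chain" $i\le a$ a similar telescoping of $\qbinom{N_{i+1}+\delta_{a+1,i+1}}{N_i}_q$ for $i=2,\ldots,a$ produces $1/\prod_{i=2}^{a}(q;q)_{N_i-N_{i-1}}$, except the very first one carries the shift $\delta_{a+1,2}$ (relevant only when $a=1$, i.e. $m=2$ with the special index), which is why the factor $(-q;q)_{N_{2+\delta_{a+1,2}}}$ appears; and the junction between the two chains at index $a$/$a+1$ is where the $\qbinom{N_{a+1}+1}{N_a}_q$ sits, giving the $(1-q^{N_{a+1}+1})/(1-q^{N_{a+1}-N_a+1})$ quotient via the trivial relation \eqref{eq:q-binomial-trivial}. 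Assembling all pieces, the $q$-power becomes $q^{\sum_{i=2}^m\binom{N_i+1}{2}}$ (the $i=1$ term is zero), and one lands on \eqref{gen1}.

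The main obstacle I anticipate is bookkeeping the three special/degenerate indices simultaneously: (a) the boundary $a=m$, where there is no "$a+1$" component, $\delta_{a+1,\cdot}$ is identically $0$ for $2\le\cdot\le m$, and the role of "$N_{a+1}$ free at the top" must be reconciled with the displayed chain and with the convention $N_{m+1}=\infty$; (b) the boundary $a=0$ (allowed since $0\le a\le m$), where $\Lambda^{0,m}=\Lambda^{m,m}$ and every $\delta_{1,i}=0$ so the $(-q;q)$ factor and the $(1-q^{N_1+1})/(1-q^{N_1-N_0+1})$ piece must be interpreted with $N_1=0$; and (c) the appearance of $N_{2+\delta_{a+1,2}}$, which silently means "$N_3$ when $a=1$, $N_2$ otherwise", so the telescoping of the lower chain behaves differently for $m=2$. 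I would handle these by first proving \eqref{gen-q-binomial} cleanly with no casework (the box argument is uniform), and only then doing the $q$-binomial-to-Pochhammer rewriting with the convention $\qbinom{N_{m+1}+\delta}{N_m}_q:=1/(q;q)_{N_m}$ built in, checking the two extreme values $a=0$ and $a=m$ by direct substitution at the end. The intermediate telescoping identities are routine once the indexing is pinned down, so the real work is entirely in setting up consistent conventions; I would state them explicitly before the computation rather than mid-stream.
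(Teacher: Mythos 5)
Your derivation of \eqref{gen-q-binomial} is correct and is essentially the paper's argument: fix $N_i=\ell(\pi^{(i)})$, use the box formula $q^{\binom{N_i+1}{2}}\qbinom{N_{i+1}+\delta_{a+1,i+1}}{N_i}$ for $i<m$ together with $q^{\binom{N_m+1}{2}}/(q;q)_{N_m}$ for the unconstrained top component, and multiply over $i$.

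The passage to \eqref{gen1}, however, has a genuine gap. Rewriting the $q$-binomial coefficients in Pochhammer form and telescoping, as you propose, produces only ratios of $(q;q)$-factors; no amount of such cancellation can manufacture the factor $(-q;q)_{N_2+\delta_{a+1,2}}$, and your stated reason for its appearance (that ``the very first one carries the shift $\delta_{a+1,2}$, which is why the factor $(-q;q)_{\cdots}$ appears'') is not a correct mechanism. The missing step --- and the only non-routine ingredient in \eqref{gen1} --- is that one must first evaluate the $N_1$-summation in closed form:
\[
\sum_{N_1\ge 0} q^{\binom{N_1+1}{2}} \qbinom{N_2+\delta_{a+1,2}}{N_1} \;=\; (-q;q)_{N_2+\delta_{a+1,2}},
\]
which is the $q$-binomial theorem \eqref{eq:q-Bin} with $z=-q$ (equivalently, Euler's generating function for partitions into distinct parts bounded by $N_2+\delta_{a+1,2}$). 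This is precisely what the paper does; after that, the remaining rewriting of the $q$-binomials as Pochhammer quotients --- which also supplies the support inequalities and the factor $(1-q^{N_{a+1}+1})/(1-q^{N_{a+1}-N_a+1})$ coming from the junction binomial $\qbinom{N_{a+1}+1}{N_a}$ --- is routine, as you say. A related caution: the combinatorially correct subscript on the $(-q;q)$ factor is $N_2+\delta_{a+1,2}$ (the bound on the parts of $\pi^{(1)}$), not $N_{2+\delta_{a+1,2}}$; your literal reading ``$N_3$ when $a=1$'' follows the typeset statement of \eqref{gen1} but is not what the $N_1$-summation produces, so you should not build your indexing conventions around it.
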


\begin{proof}
	We make use of the fact that the generating function for strict partitions with exactly $M$ parts and largest part at most $N$ is
	\begin{align*}
		q^{\binom{M+1}{2}}\qbinom{N}{M}.
	\end{align*}
	Now, assume that each $\pi^{(i)}$ has exactly $N_i$ parts. Then, for each $1\le i\le m-1$, the strict partition $\pi^{(i)}$ has $N_i$ parts with its largest part at most $N_{i+1}+\delta_{a+1,i+1}$, and therefore the generating function is 
	\begin{align*}
		q^{\binom{N_{i}+1}{2}}\qbinom{N_{i+1}+\delta_{a+1,i+1}}{N_i}.
	\end{align*}
	Furthermore, since the strict partition $\pi^{(m)}$ has exactly $N_m$ parts, its generating function is
	\begin{align*}
		q^{\binom{N_{m}+1}{2}}\frac{1}{(q;q)_{N_m}}.
	\end{align*}
	Thus, \eqref{gen-q-binomial} follows. For \eqref{gen1}, we calculate the inner summation in \eqref{gen-q-binomial} over $N_1$ by \eqref{eq:q-Bin}. Then from \eqref{gen-q-binomial},
	\begin{align*}
		&\sum_{n\ge 0} |\Lambda^{a,m}(n)| q^n \\
		&=\sum_{N_m,\ldots, N_2\ge 0}\frac{q^{\sum_{i=2}^{m}\binom{N_i+1}{2}}(-q;q)_{N_{2}+\delta_{a+1,2} }}{(q;q)_{N_m}}\qbinom{N_3+\delta_{a+1,3}}{N_2}\qbinom{N_4+\delta_{a+1,4}}{N_3}\cdots \qbinom{N_{m}+\delta_{a+1,m}}{N_{m-1}},
	\end{align*}
and \eqref{gen1} follows by rewriting the $q$-binomial coefficients using the $q$-Pochhammer symbols.
\end{proof}

For notational convenience, we reverse the order of $\pi^{(i)}$, i.e., replacing $i$ by $m+1-i$ in \eqref{gen-q-binomial}, which also affects the order of $t_i$'s in ${\bf t}=(t_1,\ldots, t_m)$. Namely, we have to replace $a$ by $m+1-a$.
Then the generating function in Theorem \ref{th:gen-sum} becomes
\begin{align}
	&\sum_{n\ge 0} |\Lambda^{a, m}(n) | q^n \notag\\
	&=\sum_{N_1,\ldots, N_m\ge 0}\frac{q^{\sum_{i=1}^{m}\binom{N_i+1}{2}}}{(q;q)_{N_1}}\qbinom{N_1+\delta_{m-a,1}}{N_2}\qbinom{N_2+\delta_{m-a,2}}{N_3}\cdots \qbinom{N_{m-1}+\delta_{m-a,m-1}}{N_{m}}. \notag 
\end{align}

Thus, Theorem~\ref{thm:AM} is equivalent to the following theorem.
\begin{theorem}\label{thm2.1}
	For $m\ge 1$ and $0 \le a\le m$,
		\begin{align}\label{eq:gen4}
		&\frac{(q^{a+1}, q^{m+1-a}, q^{m+2};q^{m+2})_{\infty}}{(q;q)_{\infty} (q;q^2)_{\infty}}\notag\\
		&\qquad=\sum_{N_1,\ldots, N_m\ge 0}\frac{q^{\sum_{i=1}^{m}\binom{N_i+1}{2}}}{(q;q)_{N_1}}\qbinom{N_1+\delta_{m-a,1}}{N_2}\qbinom{N_2+\delta_{m-a,2}}{N_3}\cdots \qbinom{N_{m-1}+\delta_{m-a,m-1}}{N_{m}}.
	\end{align}
\end{theorem}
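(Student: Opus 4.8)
The plan is to prove the multisum identity \eqref{eq:gen4} by induction on $m$, treating the Andrews--Gordon-type product on the left as built up one factor at a time from the right-hand multisum. The base case $m=1$ reads
\begin{align*}
\frac{(q^{a+1},q^{2-a},q^3;q^3)_\infty}{(q;q)_\infty(q;q^2)_\infty}=\sum_{N_1\ge 0}\frac{q^{\binom{N_1+1}{2}}}{(q;q)_{N_1}},
\end{align*}
with $a\in\{0,1\}$; by Euler's identity $\sum_{N\ge 0}q^{\binom{N+1}{2}}/(q;q)_N=(-q;q)_\infty=(q^2;q^2)_\infty/(q;q)_\infty$, and one checks the left side collapses to the same thing for both values of $a$ (the Jacobi triple product turns $(q^{a+1},q^{2-a},q^3;q^3)_\infty$ into $\sum_{n}(-1)^nq^{3\binom n2+(a+1)n}$, which equals $(q;q)_\infty(q^2;q^2)_\infty/(q;q^2)_\infty\cdot$[correction factor]; I would verify this explicitly). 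The value $a=0$ and $a=m$ should also be checked to agree, consistent with the remark in the excerpt that $\Lambda^{0,m}=\Lambda^{m,m}$.

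For the inductive step I would peel off the innermost sum. Group the summand of the right-hand side of \eqref{eq:gen4} by isolating the $N_m$-summation: the only places $N_m$ appears are in $q^{\binom{N_m+1}{2}}$ and in the last $q$-binomial $\qbinom{N_{m-1}+\delta_{m-a,m-1}}{N_m}$. By the finite $q$-binomial theorem \eqref{eq:q-Bin} (read in the form $\sum_{N}q^{\binom N2}z^N\qbinom{M}{N}=(-z;q)_{M}$, after matching $\binom{N_m+1}{2}=\binom{N_m}{2}+N_m$ so $z=q$), this inner sum evaluates to $(-q;q)_{N_{m-1}+\delta_{m-a,m-1}}$, which is $(q^2;q^2)_{N_{m-1}+\delta}/(q;q)_{N_{m-1}+\delta}$. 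Substituting this back, the $(q;q)_{N_{m-1}}$ in the new leading denominator partially cancels, and — after a change of variables shifting indices — one wants the remaining $(m-1)$-fold sum to match the $m-1$ instance of the same family but with the modulus decreased by $2$ and with the gap condition/offset parameter suitably shifted. Concretely I expect the recursion to read something like: the $m$-variable sum with offset at position $m-a$ equals (up to an explicit theta-type prefactor supplied by the $(q^2;q^2)$ that appeared) the $(m-1)$-variable sum with offset relocated, which on the product side corresponds to $(q^{a+1},q^{m+1-a},q^{m+2};q^{m+2})_\infty\mapsto(q^{a+1},q^{m-a},q^{m+1};q^{m+1})_\infty$ times the extra factor. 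Matching these two product expressions is a routine Jacobi-triple-product bookkeeping step.

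An alternative — and possibly cleaner — route is to identify the right-hand multisum directly with a known Andrews--Gordon / Bressoud multisum. Reversing indices, the sum has the shape $\sum q^{N_1^2+\cdots}\big/\prod (q;q)_{N_i-N_{i+1}}$ familiar from the Andrews--Gordon identities, except the Gaussian-polynomial-with-offset structure encodes the even-modulus Bressoud refinement and the $\delta_{m-a,\cdot}$ marks where the ``extra'' row sits. So I would state the target as: after the substitutions $n_i=N_i+\cdots$, \eqref{eq:gen4} is exactly Bressoud's even-modulus identity \cite{Bress} (or the Andrews--Gordon identity \cite{And1966,Gordon} when one of the degenerate parameter values holds, accounting for the $(q;q^2)_\infty$ in the denominator). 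Then the proof reduces to a careful dictionary between the two parametrizations. I would likely present the induction as the primary argument and mention this identification as a remark.

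The main obstacle I anticipate is the combinatorics of the offset parameter $\delta_{m-a,\cdot}$ under the recursion: when $a=m-1$ the offset sits on the very last binomial being summed, so the $q$-binomial-theorem step produces $(-q;q)_{N_{m-1}+1}$ rather than $(-q;q)_{N_{m-1}}$, and I have to track how that ``$+1$'' migrates to become the offset for the $(m-1)$-variable instance — and simultaneously confirm that it is the parameter $m+1-a$ (not $a+1$) in the product that decrements. Getting the edge cases $a\in\{0,1,m-1,m\}$ to be consistent with the interior cases, and making sure the theta prefactors telescope correctly down to the $m=1$ base case, is where the real care is needed; the rest is manipulation of $q$-Pochhammer symbols and Heine-type transformations already catalogued in Section~\ref{sec:prel}.
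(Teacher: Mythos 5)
Your base case is fine, and your instinct to sum out the innermost variable with the $q$-binomial theorem matches what the paper does in Theorem~\ref{th:gen-sum} (obtaining the factor $(-q;q)_{N+\delta}$). But the inductive step as you describe it does not close. After summing over $N_m$ you are left with
\begin{align*}
\sum_{N_1,\ldots,N_{m-1}\ge 0}\frac{q^{\sum_{i=1}^{m-1}\binom{N_i+1}{2}}\,(-q;q)_{N_{m-1}+\delta_{m-a,m-1}}}{(q;q)_{N_1}}\qbinom{N_1+\delta_{m-a,1}}{N_2}\cdots\qbinom{N_{m-2}+\delta_{m-a,m-2}}{N_{m-1}},
\end{align*}
and the factor $(-q;q)_{N_{m-1}+\delta}$ depends on the summation variable $N_{m-1}$, so it is not a ``theta-type prefactor'' that can be pulled out; the remaining sum is a genuinely different object from the $(m-1)$-variable instance of \eqref{eq:gen4}. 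On the product side the obstruction is visible too: the ratio of $(q^{a+1},q^{m+1-a},q^{m+2};q^{m+2})_\infty$ to $(q^{a+1},q^{m-a},q^{m+1};q^{m+1})_\infty$ is not an infinite product of the kind your recursion would require, so the moduli cannot telescope from $m+2$ down to $3$ one step at a time. The recursion you posit is false as stated.

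Your alternative route --- ``identify the multisum with a known Andrews--Gordon/Bressoud multisum via a dictionary of substitutions'' --- points in the right direction but hides the entire difficulty. The relevant known identities are not Bressoud's even-modulus theorem (the product here carries the extra $(q;q^2)_\infty$ and allows all moduli $m+2$); they are the parity identities of Andrews \cite{And2010} and Kim--Yee \cite{KY2013}, in which the parameter $a$ enters as an exponent shift $-\sum_{i=1}^{a}N_{2i}$ rather than as a ``$+1$'' inside a single $q$-binomial coefficient $\qbinom{N_{m-a}+1}{N_{m-a+1}}$. No change of variables converts one form into the other; their equality is itself a theorem. The paper's proof consists precisely of (i) a symmetry property (Theorem~\ref{th:sym-a}) reducing to $a\le m/2$, and (ii) a transformation formula (Theorem~\ref{th:2a-general}) trading the offset $q$-binomial for the exponent shift, both resting on the double-induction Lemma~\ref{le:1-2-a-i}. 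That machinery is the missing content of your proposal; without it, neither of your two routes yields a proof.
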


Interestingly, we have a piece of evidence from an identity due to Andrews \cite[Theorem 3, eq.~(1.9) with $a=2$]{And2010} in combination to Kim and Yee \cite[Theorem 1.4, eq.~(1.2) with $a=1$]{KY2013}:
\begin{align*}
	\sum_{N_1,\ldots, N_m\ge 0}\frac{q^{\sum_{i=1}^{m}\binom{N_i+1}{2}}}{(q;q)_{N_1}}\qbinom{N_1}{N_2}\qbinom{N_2}{N_3}\cdots \qbinom{N_{m-1}}{N_{m}} = \frac{ (q, q^{m+1}, q^{m+2};q^{m+2})_{\infty}}{ (q;q)_{\infty} (q;q^2)_{\infty}},
\end{align*}
which corresponds to the case $a=1$. 

\begin{remark}
    For other multisum representations for the infinite products in Theorem \ref{thm2.1}, see for instance, Chen, Sang and Shi \cite[Theorem 1.8, eq.~(1.7)]{chen} and Sang and Shi \cite[Theorem 1.11, eq.~(1.4)]{shi}. The main combinatorial objects considered in those two papers are overpartitions.
\end{remark}

\subsection{Proof of Theorem \ref{thm2.1}}

For the proof, there are three main ingredients:

\begin{enumerate}[label=\textup{(\arabic*)}, itemindent=*, leftmargin=*]
	\item Theorem \ref{th:sym-a}: \emph{A symmetry property} that allows us to reduce \eqref{eq:gen4} to the cases of $a\le \frac{m}{2} +1$;
	
	\item Theorem \ref{th:2a-general}: \emph{A $q$-binomial coefficient multisum transformation formula} that allows us to rewrite the summation side of \eqref{eq:gen4} as an alternative form;
	
	\item Theorems \ref{th:Andrews-even} and \ref{th:KY-odd}: \emph{Two identities of Andrews and Kim--Yee} that build a connection between the product side of \eqref{eq:gen4} and the above alternative summation side.
\end{enumerate}

For notational convenience, we map $a\mapsto m-a$ in \eqref{eq:gen4} and shall prove the following equivalent identity.

\begin{theorem}\label{th:gAG-original}
	For $m\ge 1$ and $0\le a\le m$,
	\begin{align}\label{eq:gAG-m>=2a}
		& \frac{ (q^{a+1},q^{m+1-a},q^{m+2};q^{m+2})_\infty}{(q;q)_\infty (q;q^2)_{\infty}} \notag \\ &\qquad =\sum_{N_1,\ldots, N_m\ge 0}\frac{q^{\sum_{i=1}^{m}\binom{N_i+1}{2}}}{(q;q)_{N_1}}\qbinom{N_1+\delta_{a,1}}{N_2}\qbinom{N_2+\delta_{a,2}}{N_3}\cdots \qbinom{N_{m-1}+\delta_{a,m-1}}{N_{m}}.
	\end{align}
\end{theorem}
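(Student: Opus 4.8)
The plan is to prove the equivalent identity \eqref{eq:gAG-m>=2a} in three stages, exactly along the lines of the three ingredients listed above: first a reduction in $a$ via symmetry, then a transformation of the multisum into Andrews--Gordon shape, and finally an appeal to the parity refinements of Andrews and Kim--Yee. Write $S_a^{(m)}(q)$ for the right-hand side of \eqref{eq:gAG-m>=2a}.

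\emph{Stage 1: Symmetry (Theorem \ref{th:sym-a}).} I would first show $S_a^{(m)}(q)=S_{m-a}^{(m)}(q)$. Since the product $(q^{a+1},q^{m+1-a},q^{m+2};q^{m+2})_\infty$ is manifestly invariant under $a\mapsto m-a$ (it swaps the arguments $q^{a+1}$ and $q^{m+1-a}$), this symmetry reduces the theorem to the range $a\le\tfrac m2+1$, and in particular identifies the two edge cases $a=0$ and $a=m$, consistent with $\Lambda^{0,m}=\Lambda^{m,m}$. The cleanest route is combinatorial, working directly with the $(2,{\bf t})$-restricted multipartitions of Section~\ref{sec:2-restricted}: componentwise conjugation together with a re-reading of the chain $\pi^{(i)}_1\le\ell(\pi^{(i+1)})+\delta$ should interchange the location of the single ``$+1$'' shift. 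Alternatively one can give a purely $q$-series derivation by repeatedly applying the Pascal-type recurrences \eqref{binomtri1}--\eqref{binomtri2} to slide the $\delta$-shift through the telescoping chain of $q$-binomial coefficients and re-index.

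\emph{Stage 2: Multisum transformation (Theorem \ref{th:2a-general}).} With $a$ in the reduced range, I would collapse the nested sum $S_a^{(m)}(q)$ one index at a time, starting from the innermost sum over $N_m$ and working outward, invoking the $q$-binomial theorem \eqref{eq:q-Bin} at each step (this is what manufactures the $(q;q^2)_\infty^{-1}$-type factor present on the product side). The result is an Andrews--Gordon/Bressoud-shaped multisum in new variables $n_1\ge n_2\ge\cdots$, with the usual quadratic exponent $n_1^2+n_2^2+\cdots$ and with the \emph{linear} part of the exponent recording the position $a$ of the shift. This bookkeeping is the step I expect to be the main obstacle: the shift $\delta_{a,i}$ is pinned to the fixed index $i=a$ while the summation variable being eliminated sweeps past it, so one must check carefully that after all the collapses the ``$+1$'' has landed in precisely the linear terms needed to produce $q^{a+1}$ and $q^{m+1-a}$. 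I would organize this as an induction on $m$, or as an iterated application of a single-step $q$-binomial summation lemma.

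\emph{Stage 3: Matching the product (Theorems \ref{th:Andrews-even} and \ref{th:KY-odd}).} Once $S_a^{(m)}(q)$ is in Andrews--Gordon form, the evaluation is immediate: Andrews' identity from \cite{And2010} handles the subcase governed by one parity condition and the Kim--Yee identity from \cite{KY2013} handles the other, and the reduction of Stage~1 guarantees that for each admissible $a$ exactly one of the two applies. The $a=1$ instance quoted in the excerpt,
\[
\sum_{N_1,\ldots,N_m\ge 0}\frac{q^{\sum_{i=1}^{m}\binom{N_i+1}{2}}}{(q;q)_{N_1}}\qbinom{N_1}{N_2}\qbinom{N_2}{N_3}\cdots\qbinom{N_{m-1}}{N_m}=\frac{(q,q^{m+1},q^{m+2};q^{m+2})_\infty}{(q;q)_\infty(q;q^2)_\infty},
\]
is the baseline consistency check for this final substitution. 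Combining the three stages yields \eqref{eq:gAG-m>=2a}, hence Theorem \ref{th:gAG-original}, and therefore Theorems \ref{thm2.1} and \ref{thm:AM}.
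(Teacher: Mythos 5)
Your three-stage architecture coincides with the plan the paper itself announces (symmetry, a multisum transformation, then the Andrews and Kim--Yee evaluations), so the overall approach is the right one. However, as written the proposal leaves the two load-bearing steps unsupplied, and in one case misdescribes the mechanism. For Stage 1, the symmetry $a\leftrightarrow m-a$ of the sum side is a genuinely nontrivial identity: the combinatorial route you suggest does not obviously exist (the components are \emph{strict} partitions, and componentwise conjugation destroys strictness, so ``re-reading the chain'' after conjugation is not a bijection between $\Lambda^{a,m}(n)$ and $\Lambda^{m-a,m}(n)$ without further work), while the $q$-series route of ``sliding the shift via \eqref{binomtri1}--\eqref{binomtri2}'' only gets you as far as expressing each side as the unshifted sum plus a telescoped family of sums carrying linear exponents $\sum_{i=a}^{j}(N_i+1)$ (resp.\ $\sum_{i=m-a}^{j}(N_i+1)$); the real content is that these two families agree, which is Lemma~\ref{le:1-2-a-i}(ii), proved in the paper by a fairly elaborate double induction. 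Nothing in your sketch substitutes for that lemma.

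For Stage 2, the mechanism you describe is not what is needed and would not work as stated. Theorem~\ref{th:2a-general} does not collapse inner summations and does not ``manufacture the $(q;q^2)_\infty^{-1}$ factor'' --- that factor arises only at the very end from the Andrews/Kim--Yee product evaluations. The transformation keeps all $2a$ summation indices and the same chain of $q$-binomial coefficients; it trades the single shifted factor $\qbinom{N_a+1}{N_{a+1}}$ for the linear exponent $-\sum_{i=1}^{a}N_{2i}$, uniformly in an arbitrary weight $F(N_{2a})$ attached to the last index (which is how it is then applied with $F$ equal to the untouched tail of the sum). Its proof again rests on Lemma~\ref{le:1-2-a-i} (via Corollary~\ref{coro:b-1-important}), not on iterated applications of \eqref{eq:q-Bin}. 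Finally, in Stage 3 the dichotomy between Theorem~\ref{th:Andrews-even} and Theorem~\ref{th:KY-odd} is governed by the parity of $m$ (even versus odd number of binomial factors), not by the range of $a$ achieved in Stage 1, and you need the general-$a$ forms \eqref{eq:And-1.9} and \eqref{eq:KY-1.2} with the linear term $-\sum_{i=1}^{a}N_{2i}$, not merely the $a=1$ baseline identity you quote. So the proposal is a correct outline but with genuine gaps at precisely the points where the proof is hard.
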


\begin{proof}
	We first observe that the $a=0$ or $m$ cases of \eqref{eq:gAG-m>=2a} are exactly \eqref{eq:And-1.9} and \eqref{eq:KY-1.2} with $a=0$. Further, by the symmetry property in Theorem \ref{th:sym-a}, it suffices to show \eqref{eq:gAG-m>=2a} for $1\le a\le \frac{m}{2}$.
	Now in Theorem \ref{th:2a-general}, we choose
	\begin{align*}
		F(N_{2a})\mapsto \sum_{N_{2a+1},\ldots,N_{m}\ge 0}q^{\binom{N_{2a+1}+1}{2}+\cdots+\binom{N_m+1}{2}}\qbinom{N_{2a}}{N_{2a+1}}\cdots \qbinom{N_{m-1}}{N_m}.
	\end{align*}
	Then,
	\begin{align*}
		\RHS\eqref{eq:gAG-m>=2a}
		&=\sum_{N_1,\ldots,N_{m}\ge 0}\frac{q^{\sum_{i=1}^{m} \binom{N_i+1}{2}}}{(q;q)_{N_1}} \qbinom{N_1}{N_2}\cdots \qbinom{N_{a-1}}{N_{a}}  \qbinom{N_a+1}{N_{a+1}} \qbinom{N_{a+1}}{N_{a+2}}\cdots  \qbinom{N_{m-1}}{N_m}\\
		&=\sum_{N_1,\ldots,N_{m}\ge 0}\frac{q^{\sum_{i=1}^m \binom{N_i+1}{2}-\sum_{i=1}^a N_{2i} }}{(q;q)_{N_1}}\qbinom{N_1}{N_2}\cdots \qbinom{N_{m-1}}{N_{m}}\\
		&=\frac{(q^{a+1},q^{m+1-a},q^{m+2};q^{m+2})_\infty}{(q;q)_\infty(q;q^2)_{\infty}},
	\end{align*}
	where the second equality follows from Theorem~\ref{th:2a-general} and we make the use of \eqref{eq:And-1.9} and \eqref{eq:KY-1.2} for the last equality.
\end{proof}

\subsubsection{A symmetry property}

Notice that the product side of \eqref{eq:gAG-m>=2a} has some sort of symmetry in the sense that if we replace $a$ by $m-a$, the product stays invariant. We will show that the same symmetry property also holds true for the summation side.

\begin{theorem}\label{th:sym-a}
	For $m\ge 1$ and $1\le a\le m-1$,
	\begin{align}\label{eq:sym-a}
		&\sum_{N_1,\ldots,N_{m}\ge 0}\frac{q^{\sum_{i=1}^m \binom{N_i+1}{2}}}{(q;q)_{N_1}} \qbinom{N_1}{N_2}\cdots \qbinom{N_{a-1}}{N_{a}}  \qbinom{N_a+1}{N_{a+1}} \qbinom{N_{a+1}}{N_{a+2}}\cdots \qbinom{N_{m-1}}{N_{m}}\notag\\
		&= \sum_{N_1,\ldots,N_{m}\ge 0}\frac{q^{ \sum_{i=1}^m \binom{N_i+1}{2}}}{(q;q)_{N_1}} \qbinom{N_1}{N_2}\cdots \qbinom{N_{m-a-1}}{N_{m-a}}  \qbinom{N_{m-a}+1}{N_{m-a+1}} \qbinom{N_{m-a+1}}{N_{m-a+2}}\cdots \qbinom{N_{m-1}}{N_{m}}.
	\end{align}
\end{theorem}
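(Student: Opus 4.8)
The plan is to prove the symmetry by manipulating the multisum directly, reducing both sides to a common form, or equivalently by exhibiting a chain of equalities that transports the ``$+1$'' shift from position $a$ to position $m-a$ one step at a time. Set
$$
S_a := \sum_{N_1,\ldots,N_m\ge 0}\frac{q^{\sum_{i=1}^m\binom{N_i+1}{2}}}{(q;q)_{N_1}}\qbinom{N_1}{N_2}\cdots\qbinom{N_{a-1}}{N_a}\qbinom{N_a+1}{N_{a+1}}\qbinom{N_{a+1}}{N_{a+2}}\cdots\qbinom{N_{m-1}}{N_m}.
$$
The goal is $S_a=S_{m-a}$. First I would record the key local identity that moves a unit up or down one index. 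Using the $q$-Pochhammer form of $\qbinom{N+1}{M}$ together with the recurrences \eqref{binomtri1} and \eqref{binomtri2}, one expects a relation of the shape
$$
\sum_{M\ge 0} q^{\binom{M+1}{2}}\qbinom{N+1}{M}\qbinom{M}{K}
= \sum_{M\ge 0} q^{\binom{M+1}{2}}\qbinom{N}{M}\qbinom{M+1}{K},
$$
valid for all $N,K\ge 0$; this is the engine of the argument. Summed against the rest of the multisum it says precisely $S_a = S_{a+1}$-with-the-shift-at-$a{+}1$ replaced by a shift-at-$a$ — i.e. it lets the shift hop by one position without changing the total sum, provided the neighbouring factors are the ``plain'' (unshifted) $q$-binomials. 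Iterating, the shift can be walked from position $a$ to any position, so in particular $S_a=S_{m-a}$.

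Concretely I would proceed as follows. Step 1: prove the displayed single-variable identity. The cleanest route is to expand the left side by \eqref{eq:q-Bin} (the $q$-binomial theorem applied to $(q^?;q)$) or, more straightforwardly, to note that both sides are the generating function for a pair of nested strict partitions with a prescribed offset, and so must agree; alternatively one verifies it by writing $\qbinom{N+1}{M}=\qbinom{N}{M}+q^{N+1-M}\qbinom{N}{M-1}$ and $\qbinom{M+1}{K}=\qbinom{M}{K}+q^{K}\qbinom{M}{K-1}$ and matching terms after a shift $M\mapsto M-1$, the $q^{\binom{M+1}{2}}$ factor absorbing the stray powers of $q$ via $\binom{M+1}{2}=\binom{M}{2}+M$. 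Step 2: substitute this into $S_a$ at the block $\qbinom{N_{a-1}}{N_a}\qbinom{N_a+1}{N_{a+1}}$, summing over $N_a$, to obtain the same multisum but with the ``$+1$'' now attached to $\qbinom{N_{a-1}+1}{N_a}$ and the factor at index $a$ reverted to $\qbinom{N_a}{N_{a+1}}$; this is exactly $S_{a-1}$. By symmetry of the manipulation (it also runs the other way, moving the shift up by one, using the mirror substitution at $\qbinom{N_a+1}{N_{a+1}}\qbinom{N_{a+1}}{N_{a+2}}$), we get $S_a=S_{a-1}=\cdots=S_1$ and also $S_a=S_{a+1}=\cdots=S_{m-1}$, forcing all $S_a$ to coincide; in particular $S_a=S_{m-a}$.

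The main obstacle I anticipate is handling the endpoints, where the symmetry of the interior step breaks down: when the shift reaches $a=1$ the neighbouring factor is $\tfrac{1}{(q;q)_{N_1}}$ rather than a $q$-binomial, and when it reaches $a=m-1$ the factor $\qbinom{N_{m-1}}{N_m}$ together with the free sum over $N_m$ plays a special role. So one should not literally push the shift all the way to an endpoint; instead, it suffices to move it from position $a$ to position $m-a$ through interior positions only (since $1\le a\le m-1$ and $1\le m-a\le m-1$, and the path can stay in $\{2,\ldots,m-2\}$ except possibly at its two ends, which are the two positions we actually want), so the endpoint factors are never disturbed. Verifying that the bookkeeping of $q$-exponents $\sum_i\binom{N_i+1}{2}$ is preserved under each hop — and that no convergence/interchange-of-summation issue arises (all series are power series in $q$ with nonnegative coefficients, so Tonelli applies) — is routine but is the place where care is needed.
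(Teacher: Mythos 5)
There is a genuine gap: the single\-/variable ``hopping'' identity that you designate as the engine of the argument,
\begin{equation*}
\sum_{M\ge 0} q^{\binom{M+1}{2}}\qbinom{N+1}{M}\qbinom{M}{K}
\;=\;\sum_{M\ge 0} q^{\binom{M+1}{2}}\qbinom{N}{M}\qbinom{M+1}{K},
\end{equation*}
is false. Take $N=K=0$: the left side is $1+q$ (the $M=0$ and $M=1$ terms), while the right side reduces to its $M=0$ term, namely $1$. Carrying out your own verification sketch via \eqref{binomtri1} and \eqref{binomtri2} shows that the two sides in fact differ by
$q^{N+1}\sum_{M} q^{\binom{M+1}{2}}\qbinom{N}{M}\qbinom{M+1}{K}-q^{K}\sum_{M} q^{\binom{M+1}{2}}\qbinom{N}{M}\qbinom{M}{K-1}$,
which does not vanish. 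The conclusion you would draw from it --- that $S_1=S_2=\cdots=S_{m-1}$ --- is also false, and necessarily so: by Theorem~\ref{th:2a-general} combined with \eqref{eq:And-1.9} and \eqref{eq:KY-1.2} (whose proofs do not use the symmetry being established here), $S_a$ equals $(q^{a+1},q^{m+1-a},q^{m+2};q^{m+2})_\infty/\big((q;q)_\infty(q;q^2)_\infty\big)$ for $a\le m/2$, and already for $m=4$ one has $S_1/S_2=(q^2,q^4;q^6)_\infty/(q^3;q^6)_\infty^2\neq 1$. So no local, sum\-/preserving move of the ``$+1$'' by a single position can exist; the symmetry $S_a=S_{m-a}$ is a global phenomenon, not a composite of local hops, and the Tonelli/bookkeeping issues you flag are not where the difficulty lies.

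For comparison, the paper's proof expands $\qbinom{N_a+1}{N_{a+1}}$ by \eqref{binomtri1}: the ``plain'' term reproduces the unshifted multisum, while the other term, after replacing $N_{a+1}$ by $N_{a+1}+1$, moves the shift to position $a+1$ at the cost of an extra factor $q^{N_a+1}$. Iterating until the shift falls off the right end yields
\begin{equation*}
S_a=\sum_{N_1,\ldots,N_m\ge0}\frac{q^{\sum_{i}\binom{N_i+1}{2}}}{(q;q)_{N_1}}\qbinom{N_1}{N_2}\cdots\qbinom{N_{m-1}}{N_m}
+\sum_{j=a}^{m-1}\sum_{N_1,\ldots,N_m\ge0}\frac{q^{\sum_{i}\binom{N_i+1}{2}+\sum_{i=a}^{j}(N_i+1)}}{(q;q)_{N_1}}\qbinom{N_1}{N_2}\cdots\qbinom{N_{m-1}}{N_m},
\end{equation*}
and likewise with $a$ replaced by $m-a$; the theorem then reduces to the equality of the two tail sums, which is exactly the content of Lemma~\ref{le:1-2-a-i}(ii) and is proved there by a double induction. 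To salvage your plan you would need a correct account of what changes when the shift moves by one position, and that correction term is precisely what the telescoping plus Lemma~\ref{le:1-2-a-i} control; it cannot be dispensed with.
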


The proof of this symmetry property is based on a key lemma given in Lemma \ref{le:1-2-a-i}.

\begin{proof}[Proof of Theorem \ref{th:sym-a}]
	First, \eqref{eq:sym-a} is trivial when $a=m-a$. Now, without loss of generality, we assume that $a\ge m-a+1$. By \eqref{binomtri1}, 
	$$
	\qbin{N_a+1}{N_{a+1}}=\qbinom{N_a}{N_{a+1}}+q^{N_a+1-N_{a+1}}\qbinom{N_a}{N_{a+1}-1}.
	$$
	Hence,
	\begin{align*}
		&\LHS\eqref{eq:sym-a}\\
		&=\sum_{N_1,\ldots,N_{m}\ge 0}\frac{q^{\sum_{i=1}^{m} \binom{N_{i}+1}{2}}}{(q;q)_{N_1}}
		\qbinom{N_1}{N_2}\cdots \qbinom{N_{a-1}}{N_{a}}  \qbinom{N_{a}+1}{N_{a+1}} \qbinom{N_{a+1}}{N_{a+2}}\cdots \qbinom{N_{m-1}}{N_{m}}\\
		&=\sum_{N_1,\ldots,N_{m}\ge 0}\frac{q^{\binom{N_1+1}{2}+\cdots+\binom{N_{m}+1}{2}}}{(q;q)_{N_1}}\qbinom{N_1}{N_2}\cdots \qbinom{N_{m-1}}{N_{m}}\\
		&\quad+\sum_{N_1,\ldots,N_{m}\ge 0}\frac{q^{ \sum_{i=1}^{m} \binom{N_i+1}{2} + (N_{a}+1)}}{(q;q)_{N_1}}
		\qbinom{N_1}{N_2}\cdots \qbinom{N_{a}}{N_{a+1}} \qbinom{N_{a+1}+1}{N_{a+2}} \qbinom{N_{a+2}}{N_{a+3}}  \cdots \qbinom{N_{m-1}}{N_{m}},
		\end{align*}
		where $N_{a+1}$ is replaced by $N_{a+1}+1$ to get the second sum in the last line. 
If we apply \eqref{binomtri1} to ${{N_{a+1}+1}\brack {N_{a+2}}}$ in the second sum above, then
\begin{align*}
&\LHS\eqref{eq:sym-a}\\		
		&=\sum_{N_1,\ldots,N_{m}\ge 0}\frac{q^{\binom{N_1+1}{2}+\cdots+\binom{N_{m}+1}{2}}}{(q;q)_{N_1}}\qbinom{N_1}{N_2}\cdots \qbinom{N_{m-1}}{N_{m}}\\
		 \\
		&\quad +\sum_{N_1,\ldots, N_m} \frac{q^{\sum_{i=1}^{m} \binom{N_i+1}{2} + (N_{a}+1)}}{(q;q)_{N_1}}
		\qbinom{N_{1}}{N_{2}} \cdots \qbinom{N_{m-1}}{N_{m}}\\
		&\quad+\sum_{N_1,\ldots,N_{m}\ge 0}\frac{q^{\sum_{i=1}^{m} \binom{N_i+1}{2}+ (N_a+1)+(N_{a+1}+1) }}{(q;q)_{N_1}}
		 \qbinom{N_1}{N_2}\cdots \qbinom{N_{a+1}}{N_{a+2}}  \qbinom{N_{a+2}+1}{N_{a+3}} \qbinom{N_{a+3}}{N_{a+4}} \cdots \qbinom{N_{m-1}}{N_{m}}.
	\end{align*}
Upon repeating this process, we are eventually led to
	\begin{align*}
		\LHS\eqref{eq:sym-a}
		&= \sum_{N_1,\ldots,N_{m}\ge 0}\frac{q^{\sum_{i=1}^{m} \binom{N_i+1}{2} }}{(q;q)_{N_1}}\qbinom{N_1}{N_2}\cdots \qbinom{N_{m-1}}{N_{m}}\notag\\
		&+ \sum_{j=a}^{m-1} \left(\sum_{N_1,\ldots N_{m}\ge 0}\frac{q^{\sum_{i=1}^{m} \binom{N_i+1}{2}+ \sum_{i=a}^{j} (N_{i}+1) }}{(q;q)_{N_1}}\qbinom{N_1}{N_2}\cdots \qbinom{N_{m-1}}{N_{m}} \right).
	\end{align*} 
	Replacing $a$ with $m-a$ also gives
	\begin{align*}
		\RHS\eqref{eq:sym-a}
		&=\sum_{N_1,\ldots,N_{m}\ge 0}\frac{q^{\sum_{i=1}^{m} \binom{N_i+1}{2}}}{(q;q)_{N_1}}\qbinom{N_1}{N_2}\cdots \qbinom{N_{m-1}}{N_{m}}\notag\\
		&+ \sum_{j=m-a}^{m-1} \left( \sum_{N_1,\ldots N_{m}\ge 0}\frac{q^{\sum_{i=1}^{m} \binom{N_i+1}{2}+\sum_{i=m-a}^{j}  (N_{i}+1) }}{(q;q)_{N_1}}\qbinom{N_1}{N_2}\cdots \qbinom{N_{m-1}}{N_{m}}\right).
	\end{align*} 
	In Lemma \ref{le:1-2-a-i} (ii), taking $(m,a,i)\mapsto (1,a-1,m-a)$, choosing $\uk=\big((1)_{m}\big)$, and fixing $F(N)\equiv 1$ for all $N\ge 0$, we shall see that $\LHS\eqref{eq:sym-a}=\RHS\eqref{eq:sym-a}$.
\end{proof}

\subsubsection{A $q$-binomial coefficient multisum transformation formula}

For $1\le a\le m-1$, exactly one $q$-binomial coefficient on the summation side of \eqref{eq:gAG-m>=2a} is of the form ${N_i+1 \brack N_{i+1}}$. Our next result allows us to get rid of the annoying ``plus one'' in $N_i+1$.

\begin{theorem}\label{th:2a-general}
	Let $\{F(N)\}_{N\ge 0}$ be a family of series in $q$. For $a\ge 1$,
	\begin{align}\label{eq:2a-general}
		&\sum_{N_1,\ldots,N_{2a}\ge 0}\frac{q^{ \sum_{i=1}^ { 2a } \binom{N_i+1}{2}} F(N_{2a})}{(q;q)_{N_1}} \qbinom{N_1}{N_2}\cdots \qbinom{N_{a-1}}{N_{a}}  \qbinom{N_a+1}{N_{a+1}} \qbinom{N_{a+1}}{N_{a+2}}\cdots \qbinom{N_{2a-1}}{N_{2a}}\notag\\
		&= \sum_{N_1,\ldots,N_{2a}\ge 0}\frac{q^{ \sum_{i=1}^ { 2a}  \binom{N_i+1}{2} -\sum_{i=1}^a N_{2i}} F(N_{2a})}{(q;q)_{N_1}}\qbinom{N_1}{N_2}\qbinom{N_2}{N_3}\cdots \qbinom{N_{2a-1}}{N_{2a}}.
	\end{align}
\end{theorem}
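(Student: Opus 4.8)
The plan is to remove the arbitrary family $\{F(N)\}$ by linearity, and then push the single ``plus one'' through the chain of $q$-binomial coefficients by repeated use of the $q$-Pascal recurrence \eqref{binomtri1}. Both sides of \eqref{eq:2a-general} are $\mathbb{Z}[[q]]$-linear in $\{F(N)\}_{N\ge 0}$ (and all the inner sums converge $q$-adically since $\binom{N_{2a}+1}{2}\to\infty$), so it suffices to treat $F(N)=\delta_{N,k}$ for each fixed $k\ge 0$. Putting $N_{2a}=k$, using $\binom{k+1}{2}-k=\binom{k}{2}$, and cancelling the common factor $q^{\binom{k+1}{2}}$, the task reduces to the finite $q$-series identity asserting that the multisum
\[
\sum_{N_1,\dots,N_{2a-1}\ge0}\frac{q^{\binom{N_1+1}{2}+\cdots+\binom{N_{2a-1}+1}{2}}}{(q;q)_{N_1}}\qbinom{N_1}{N_2}\cdots\qbinom{N_a+1}{N_{a+1}}\cdots\qbinom{N_{2a-1}}{k}
\]
equals $q^{-k}$ times the same expression with $\qbinom{N_a+1}{N_{a+1}}$ replaced by $\qbinom{N_a}{N_{a+1}}$ and the $q$-exponent decreased by $N_2+N_4+\cdots+N_{2a-2}$.

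\textbf{Migrating the ``plus one''.} Apply \eqref{binomtri1} to the unique factor $\qbinom{N_a+1}{N_{a+1}}$ and iterate. At the $j$-th stage, writing $\qbinom{N_{a+j}+1}{N_{a+j+1}}=\qbinom{N_{a+j}}{N_{a+j+1}}+q^{N_{a+j}+1-N_{a+j+1}}\qbinom{N_{a+j}}{N_{a+j+1}-1}$ and then applying the reindexing $N_{a+j+1}\mapsto N_{a+j+1}+1$ in the second summand moves the ``plus one'' one step to the right: the recurrence factor $q^{N_{a+j}+1-N_{a+j+1}}$ combines with the $q^{N_{a+j+1}+1}$ coming from $\binom{N_{a+j+1}+1}{2}\mapsto\binom{N_{a+j+1}+2}{2}$ to leave behind the monomial weight $q^{N_{a+j}+1}$. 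Crucially, because the bottom entry of the last binomial is the constant $k$ rather than a summation variable, this migration terminates: once the ``plus one'' sits in $\qbinom{N_{2a-1}+1}{k}$, one final application of \eqref{binomtri1} produces two genuinely ``all-plain'' terms, the second containing $\qbinom{N_{2a-1}}{k-1}$, which is rewritten in terms of $\qbinom{N_{2a-1}}{k}$ directly from the definition of the $q$-binomial coefficient (cf.\ \eqref{eq:q-binomial-trivial}). The outcome is an explicit presentation of the left side as a finite $\mathbb{Z}[[q]]$-combination of all-plain multisums, each carrying a product of factors $q^{N_i+1}$ (and, for the last term, an additional $q^{N_{2a-1}+1-k}$).

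\textbf{Collapsing the telescope; the obstacle.} It remains to recombine this finite sum of all-plain multisums into the single multisum $q^{-k}\sum(\cdots)$ on the right. The engine is the elementary identity $(1-q^{L})/(q;q)_{L}=1/(q;q)_{L-1}$ together with the reindexing it permits; this already settles the case $a=1$, where the claim reduces to $\sum_{L\ge0}q^{\binom{L+k}{2}}\bigl(1-q^{L}-q^{L+k}\bigr)/(q;q)_{L}=0$, which holds because $\sum_{L}q^{\binom{L+k}{2}}(1-q^{L})/(q;q)_{L}=\sum_{L\ge1}q^{\binom{L+k}{2}}/(q;q)_{L-1}=\sum_{L\ge0}q^{\binom{L+k+1}{2}}/(q;q)_{L}=\sum_{L}q^{\binom{L+k}{2}}q^{L+k}/(q;q)_{L}$ cancels the $q^{L+k}$-term. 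For general $a$ one iterates this cancellation, most cleanly by induction on $a$ (with, if needed, a secondary recursion on $k$ to handle the last term's bottom $k-1$), the inductive step being precisely a telescoping of the above shape. I expect the bookkeeping to be the main obstacle: tracking the exact weights and index shifts produced at each Pascal stage and then checking that they accumulate to exactly $q^{-(N_2+N_4+\cdots+N_{2a})}=q^{-\sum_{i=1}^{a}N_{2i}}$. A convenient way to keep this manageable is to record the effect of the repeated Pascal applications once and for all as a stand-alone $q$-binomial multisum transformation lemma, and then feed \eqref{eq:2a-general} into it.
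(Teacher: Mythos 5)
Your reduction to $F(N)=\delta_{N,k}$, the Pascal-migration of the single ``plus one'' via \eqref{binomtri1}, and your complete treatment of the base case $a=1$ are all sound, and they match the opening moves of the paper's proof (which likewise expands the left side by repeatedly applying \eqref{binomtri1}, exactly as in the proof of Theorem~\ref{th:sym-a}, and then inducts on $a$). The gap is in the step you defer as ``bookkeeping'': recombining the resulting finite family of all-plain multisums, weighted by $q^{\sum_{i=b}^{j}(N_i+1)}$ for $j=b,\dots,2b-2$ plus a shifted last term, into the single right-hand multisum. This is not an iteration of your $a=1$ cancellation. That cancellation runs on the identity $(1-q^{L})/(q;q)_L=1/(q;q)_{L-1}$ applied to the \emph{outermost} variable $N_1$, the only one carrying a $1/(q;q)_{N_1}$ factor; for $a\ge 2$ the leftover weights $q^{N_b+1},q^{N_b+N_{b+1}+2},\dots$ sit on \emph{inner} variables that appear only inside $q$-binomial coefficients, so that mechanism is simply unavailable there.

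What is actually needed is a genuine identity among these weighted multisums, namely (in the paper's notation) Corollary~\ref{coro:b-1-important}: the sum of the $b-1$ multisums weighted by $q^{\sum_{i=b}^{j}(N_i+1)}$, $j=b,\dots,2b-2$, equals the sum of the $b$ multisums weighted by $q^{\sum_{i=b-1}^{j}(N_i+1)}$, $j=b-1,\dots,2b-2$. This is the heart of the theorem; the paper derives it from Lemma~\ref{le:1-2-a-i}, whose proof is a multi-page double induction built on the decomposition formula of Lemma~\ref{le:q-binomial-1-2}, and then completes the inductive step by comparing both sides of \eqref{eq:2a-general} against an auxiliary series with the ``plus one'' moved from position $b$ to $b-1$, invoking the inductive hypothesis on the right-hand side. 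Your proposal neither states nor proves any identity of this type, so as written the argument does not close for $a\ge 2$. (A smaller unexecuted point: rewriting the terminal $\qbinom{N_{2a-1}}{k-1}$ via \eqref{eq:q-binomial-trivial} relates it to $\qbinom{N_{2a-1}+1}{k}$, not to $\qbinom{N_{2a-1}}{k}$, so the ``secondary recursion on $k$'' you allude to would also need to be made precise.)
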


We still use Lemma \ref{le:1-2-a-i} to establish Theorem \ref{th:2a-general}. However, since the proof of this lemma is far too lengthy, it will be presented after the preparation of all necessary lemmas.

\subsubsection{Identities of Andrews and Kim--Yee}

The product side of \eqref{th:gAG-original} in connection to a certain multisum has already appeared in the work of Andrews \cite{And2010} and Kim and Yee \cite{KY2013}. In particular, Andrews considered the $m$ even case and Kim and Yee studied the $m$ odd case. We record an equivalent form of their results.

\begin{theorem}[Andrews]\label{th:Andrews-even}
	For $m\ge 1$ and $a\ge 0$ with $m\ge a$,
	\begin{align}\label{eq:And-1.9}
	    &\frac{(q^{a+1},q^{2m+1-a},q^{2m+2};q^{2m+2})_\infty}{(q;q)_\infty(q;q^2)_{\infty}}\notag\\
		&\qquad=\sum_{N_1,\ldots,N_{2m}\ge 0}\frac{q^{\sum_{i=1}^m \binom{N_i+1}{2} - \sum_{i=1}^a N_{2i} }}{(q;q)_{N_1}} \qbinom{N_1}{N_2}\qbinom{N_2}{N_3}\cdots \qbinom{N_{2m-1}}{N_{2m}}.
	\end{align}
\end{theorem}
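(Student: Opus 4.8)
The plan is to recognize the multisum on the right-hand side as a specialization of the Andrews--Gordon--Bressoud machinery and to match it against Andrews' parity-refined generalized Rogers--Ramanujan identities from \cite{And2010}. First I would observe that the summand
\[
\frac{q^{\sum_{i=1}^{2m}\binom{N_i+1}{2}-\sum_{i=1}^{a}N_{2i}}}{(q;q)_{N_1}}\qbinom{N_1}{N_2}\qbinom{N_2}{N_3}\cdots\qbinom{N_{2m-1}}{N_{2m}}
\]
telescopes under the standard substitution $n_j=N_j-N_{j+1}\ge 0$ (with $N_{2m+1}=0$): the exponent $\sum_i\binom{N_i+1}{2}$ becomes a quadratic form $\sum_{i\le j}n_in_j+\tfrac12\sum_i(\text{something})$, and the products of $q$-binomials collapse to $1/\big((q;q)_{n_1}\cdots(q;q)_{n_{2m}}\big)$ after using $\qbinom{N_i}{N_{i+1}}=\frac{(q;q)_{N_i}}{(q;q)_{N_{i+1}}(q;q)_{n_i}}$ and cancelling the chain. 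This turns the left side of the telescoping into exactly the Bressoud-type multisum $\sum q^{Q(n)+L(n)}/\prod(q;q)_{n_i}$ where $Q$ is the Andrews--Gordon quadratic form on $2m$ variables and the linear shift $-\sum_i N_{2i}=-\sum_i(n_{2i}+n_{2i+1}+\cdots)$ encodes the parameter $a$.

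The second step is to quote Andrews' Theorem 3 of \cite{And2010} (equation (1.9) there, as the excerpt indicates) in the form that its multisum equals $\dfrac{(q^{a+1},q^{2m+1-a},q^{2m+2};q^{2m+2})_\infty}{(q;q)_\infty(q;q^2)_\infty}$, and to check that after the telescoping substitution above our multisum is \emph{literally} Andrews' multisum with his index parameter matched to our $a$ (with his $k$ equal to our $m$, so that the modulus is $2m+2$ and the numerator factors are $q^{a+1}$ and $q^{2m+1-a}$). Since the excerpt already states this correspondence explicitly ("Theorem 3, eq.~(1.9)"), this step is really a dictionary-matching exercise: I would write out Andrews' summand, perform the same $n_j=N_j-N_{j+1}$ reindexing on our side, and verify the two quadratic forms and the two linear forms agree coefficient by coefficient, paying attention to the boundary conventions $N_1$ appearing alone in $1/(q;q)_{N_1}$ versus Andrews' normalization.

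The main obstacle I expect is \textbf{bookkeeping of the linear term and the boundary index}. The quadratic part $\sum_{i=1}^{2m}\binom{N_i+1}{2}$ is symmetric and unambiguous, but the shift $-\sum_{i=1}^{a}N_{2i}$ only touches the even-indexed $N$'s, and after telescoping each $N_{2i}=\sum_{\ell\ge 2i}n_\ell$, so the resulting linear form in the $n$'s is $-\sum_{\ell}\lfloor\ell/2\rfloor\wedge a\cdots$ — one must be careful that this matches Andrews' convention, which typically writes the shift as subtracting a prescribed sub-sum of the $n_i$'s determined by where the "defect" node sits. A secondary subtlety: the factor $(q;q)_{N_1}$ in the denominator (rather than $(q;q)_{N_1}$ \emph{and} a $q$-binomial above it) reflects the truncation $\pi^{(m)}$ unbounded in our combinatorial setup, and I would need to confirm this is exactly Andrews' "$j=1$" boundary rather than an off-by-one variant. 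Once the dictionary is pinned down, no further analytic work is needed — the identity is \emph{Andrews' theorem}, merely re-indexed; I would state this clearly so the reader sees that Theorem~\ref{th:Andrews-even} is a restatement, not a new computation.
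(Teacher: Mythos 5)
Your strategy is the same as the paper's: substitute $n_j=N_j-N_{j+1}$ (with $n_{2m}=N_{2m}$) to collapse the chain of $q$-binomials into $1/\prod_i (q;q)_{n_i}$, and then identify the result with Andrews' Theorem~3, eq.~(1.9) of \cite{And2010}. Two points need attention, one cosmetic and one substantive. The cosmetic one: your dictionary ``his $k$ equal to our $m$'' is off. Andrews' multisum has $k-1$ summation variables and modulus $2k+2$ in a variable that must then be halved (one replaces $q^2$ by $q$ throughout), so matching $2m$ summation variables and modulus $2m+2$ forces $k\mapsto 2m+1$ and $b\mapsto 2a+2$; the exponent $\sum_i N_i^2 + n_1+n_3+\cdots+n_{2a-1}+N_{2a+1}+\cdots+N_{2m}$ then halves to exactly $\sum_i\binom{N_i+1}{2}-\sum_{i=1}^a N_{2i}$ plus the alternating corrections you anticipate. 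You explicitly plan to verify the correspondence coefficient by coefficient, so this would get caught, but as written the parameter identification is wrong.

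The substantive issue is the boundary case $a=m$. Andrews states Theorem~3 of \cite{And2010} only for $k\ge b$, which under the correct dictionary is $2m+1\ge 2a+2$, i.e.\ $a\le m-1$. The theorem you are proving claims the range $m\ge a$, and the extremal case $a=m$ (equivalently $b=k+1$) is genuinely needed later in the paper (it arises when Theorem~\ref{th:gAG-original} is specialized with $a=m/2$ for even $m$). A bare citation of Andrews' Theorem~3 therefore leaves this case unproven; the paper has to assemble it separately from other formulas in \cite{And2010} (its (4.28) with $A=2k+2$, together with (4.5), (4.27) at $x=1$, and (2.8)). Your proposal as it stands establishes the identity only for $a<m$, so you should either supply that extension or restrict the claimed range.
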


\begin{proof}
	Let $N_i=n_i+n_{i+1}+\cdots +n_{k-1}$. Recall that \cite[Theorem 3, eq.~(1.9)]{And2010} tells us that for $k, b\ge 2$ and $k\ge b-1$ with $k$ odd and $b$ even,
	\begin{align}
	    &\frac{(-q^2;q^2)_\infty (q^{b},q^{2k+2-b},q^{2k+2};q^{2k+2})_\infty}{(q^2;q^2)_\infty}\notag\\
		&\qquad=\sum_{n_1,\ldots,n_{k-1}\ge 0}\frac{q^{\sum_{i=1}^{k-1}N_i^2+n_1+n_3+\cdots+n_{b-3}+N_{b-1}+N_{b}+\cdots +N_{k-1}}}{(q^2;q^2)_{n_1}(q^2;q^2)_{n_2}\cdots (q^2;q^2)_{n_{k-1}}}.
	\end{align}
	Although Andrews only stated this result for $k\ge b$ in \cite[Theorem 3]{And2010}, the case of $b=k+1$ had also been shown by him by making use of \cite[(4.28) with $A=2k+2$]{And2010}, and then recalling \cite[(4.5) and (4.27) for $x=1$]{And2010} with \cite[(2.8)]{And2010} invoked.
	
	Let us choose $(b,k)\mapsto (2a+2,2m+1)$. Since $k\ge b-1$, we have $m\ge a$. Now we replace $q^2$ with $q$. Noticing that $n_i=N_i-N_{i+1}$ for $1\le i\le 2m-1$ and $n_{2m}=N_{2m}$, we have
	\begin{align*}
		&\frac{(-q;q)_\infty (q^{a+1},q^{2m+1-a},q^{2m+2};q^{2m+2})_\infty}{(q;q)_\infty}\\
		&=\sum_{n_1,\ldots,n_{2m}\ge 0}\frac{q^{\sum_{i=1}^{2m}\frac{N_i^2}{2}+\frac{1}{2}(n_1+n_3+\cdots+n_{2a-1}+N_{2a+1}+N_{2a+2}+\cdots +N_{2m})}}{(q;q)_{n_1}(q;q)_{n_2}\cdots (q;q)_{n_{2m}}}\\
		&=\sum_{n_1,\ldots,n_{2m}\ge 0}\frac{q^{\sum_{i=1}^{2m}\frac{N_i^2}{2}+\frac{1}{2}(N_1-N_2+N_3-N_4+\cdots+N_{2a-1}-N_{2a}+N_{2a+1}+N_{2a+2}+\cdots +N_{2m})}}{(q;q)_{N_1-N_2}(q;q)_{N_2-N_3}\cdots (q;q)_{N_{2m}}}\\
		&=\sum_{N_1,\ldots,N_{2m}\ge 0}\frac{q^{\binom{N_1+1}{2}+\binom{N_2}{2}+\cdots+\binom{N_{2a-1}+1}{2}+\binom{N_{2a}}{2}+\binom{N_{2a+1}+1}{2}+\cdots+\binom{N_{2m}+1}{2}}}{(q;q)_{N_1}}\qbinom{N_1}{N_2}\cdots \qbinom{N_{2m-1}}{N_{2m}},
	\end{align*}
	confirming the desired result.
\end{proof}

\begin{theorem}[Kim--Yee]\label{th:KY-odd}
	For $m\ge 1$ and $a\ge 0$ with $m\ge a+1$,
	\begin{align}\label{eq:KY-1.2}
	    &\frac{(q^{a+1},q^{2m-a},q^{2m+1};q^{2m+1})_\infty}{(q;q)_\infty(q;q^2)_{\infty}}\notag\\
		&\qquad=\sum_{N_1,\ldots,N_{2m-1}\ge 0}\frac{q^{ \sum_{i=1}^m \binom{N_i+1}{2} -\sum_{i=1}^a N_{2i} }}{(q;q)_{N_1}}\qbinom{N_1}{N_2}\qbinom{N_2}{N_3}\cdots \qbinom{N_{2m-2}}{N_{2m-1}}.
	\end{align}
\end{theorem}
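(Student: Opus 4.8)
The plan is to mimic, essentially verbatim, the proof of Theorem~\ref{th:Andrews-even}: I would feed in the odd-modulus companion identity of Kim and Yee \cite[Theorem 1.4, eq.~(1.2)]{KY2013} where Andrews' even-modulus identity \cite[Theorem 3, eq.~(1.9)]{And2010} was used in that proof, and then carry out the same change of variables and exponent rearrangement.

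Concretely, I would first quote Kim and Yee's result in its original shape, that is, as a Rogers--Ramanujan--Gordon type multisum with denominators $(q^2;q^2)_{n_i}$ and an exponent of the form $\sum_i N_i^2 + L$, where $N_i = n_i + n_{i+1} + \cdots$ and the linear form $L$ records which residue class is singled out, the two non-trivial theta factors on the product side carrying exponents $q^b$ and $q^{2k+2-b}$ against the modulus $q^{2k+2}$. I would then specialise $(b,k) \mapsto (2a+2, 2m)$, so that the admissibility constraint of their identity becomes exactly $2m \ge 2a+2$, i.e.\ $m \ge a+1$, which is precisely the hypothesis of Theorem~\ref{th:KY-odd}. (If their published statement excludes the boundary value $b=k$, then the case $a = m-1$ would, as with Andrews' treatment of $b = k+1$ inside the proof of Theorem~\ref{th:Andrews-even}, have to be extracted separately from the auxiliary recurrences of \cite{KY2013}.)

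Next, replacing $q^2$ by $q$ throughout turns the modulus $q^{2k+2}$ into $q^{k+1} = q^{2m+1}$ and the left-hand side into $(q^{a+1}, q^{2m-a}, q^{2m+1}; q^{2m+1})_\infty \big/ \big( (q;q)_\infty (q;q^2)_\infty \big)$, which is the product in \eqref{eq:KY-1.2}. On the right-hand side, the substitution $n_i = N_i - N_{i+1}$ for $1 \le i \le 2m-2$ (with $n_{2m-1} = N_{2m-1}$) telescopes $\prod_{i=1}^{2m-1}(q;q)_{n_i}$ into $(q;q)_{N_1} \big/ \prod_{i=1}^{2m-2}\qbinom{N_i}{N_{i+1}}$, while the halved exponent $\tfrac{1}{2}\big(\sum_i N_i^2 + L\big)$ must be reorganised, using $\binom{N+1}{2} - \binom{N}{2} = N$, into $\sum_i \binom{N_i+1}{2} - \sum_{i=1}^{a} N_{2i}$; here the even-indexed variables $N_2, N_4, \ldots, N_{2a}$ are exactly those whose contribution drops from $\binom{N+1}{2}$ to $\binom{N}{2}$. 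Assembling these two computations yields \eqref{eq:KY-1.2}.

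The variable change and the telescoping are routine; the genuine points of care are (i) pinning down the precise parametrisation and range of validity of the Kim--Yee identity (including, if needed, the boundary value of $b$, exactly as in Andrews' proof) so that it lines up with the hypothesis $m \ge a+1$, and (ii) the bookkeeping that rewrites the linear part $L$ of the exponent as $-\sum_{i=1}^{a} N_{2i}$. I expect (i) to be the main obstacle.
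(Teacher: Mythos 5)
Your proposal follows essentially the same route as the paper: quote Kim--Yee's identity \cite[Theorem 1.4, eq.~(1.2)]{KY2013}, specialise the parameters (the paper takes $(b,k)\mapsto(2a+1,2m)$ in the parametrisation with theta exponents $q^{b+1}$ and $q^{2k+1-b}$, which is your substitution up to the shift $b\mapsto b+1$), replace $q^2$ by $q$, and perform the change of variables $n_i=N_i-N_{i+1}$ with the exponent bookkeeping you describe. The boundary caveat you raise does not in fact arise here, since in the paper's parametrisation $b=2a+1$ is odd and $k=2m$ is even, so $b<k$ always holds under $m\ge a+1$.
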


\begin{proof}
	Let $N_i=n_i+n_{i+1}+\cdots +n_{k-1}$. We know from \cite[Theorem 1.4, eq.~(1.2)]{KY2013} that for $k\ge b\ge 1$ with $k$ even and $b$ odd,
	\begin{align}
	    &\frac{(-q^2;q^2)_\infty (q^{b+1},q^{2k+1-b},q^{2k+2};q^{2k+2})_\infty}{(q^2;q^2)_\infty}\notag\\
		&\qquad=\sum_{n_1,\ldots,n_{k-1}\ge 0}\frac{q^{\sum_{i=1}^{k-1}N_i^2+n_1+n_3+\cdots+n_{b-2}+N_{b}+N_{b+1}+\cdots +N_{k-1}}}{(q^2;q^2)_{n_1}(q^2;q^2)_{n_2}\cdots (q^2;q^2)_{n_{k-1}}}.
	\end{align}
	Choosing $(b,k)\mapsto (2a+1,2m)$ and replacing $q^2$ with $q$, the desired result follows.
\end{proof}

\subsection{A decomposition formula for $q$-binomial coefficient multisums}

Now let us start with a decomposition formula that lays the foundation for our key lemmas to be established in the next section.

\begin{lemma}\label{le:q-binomial-1-2}
	Let $\{F(N)\}_{N\ge 0}$ be a family of series in $q$. Given an arbitrary integer $a\ge 2$, we have that, for any nonnegative integers $k_1,\ldots,k_{a-1}$,
	\begin{align}
		&\sum_{N_1,\ldots N_a\ge 0}\frac{q^{\sum_{i=1}^{a-2} \binom{N_i+k_i}{2}+\binom{N_{a-1}+k_{a-1}}{2}+\binom{N_{a}+k_{a-1}+1}{2}}F(N_a)}{(q;q)_{N_1}}\qbinom{N_1}{N_2}\qbinom{N_2}{N_3}\cdots \qbinom{N_{a-1}}{N_a}\notag\\
		&=\sum_{N_1,\ldots N_a\ge 0}\frac{q^{\sum_{i=1}^{a-2} \binom{N_i+k_i}{2}+\binom{N_{a-1}+k_{a-1}+1}{2}+\binom{N_{a}+k_{a-1}}{2}}F(N_a)}{(q;q)_{N_1}}\qbinom{N_1}{N_2}\qbinom{N_2}{N_3}\cdots \qbinom{N_{a-1}}{N_a}\notag\\
		&+\sum_{N_1,\ldots N_a\ge 0}\frac{q^{\sum_{i=1}^{a-2} \binom{N_i+k_i+1}{2}+\binom{N_{a-1}+k_{a-1}+1}{2}+\binom{N_{a}+k_{a-1}+1}{2}}F(N_a)}{(q;q)_{N_1}}\qbinom{N_1}{N_2}\qbinom{N_2}{N_3}\cdots \qbinom{N_{a-1}}{N_a}. \label{eq:q-binomial-1-2}
	\end{align}
\end{lemma}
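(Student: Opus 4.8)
The plan is to reduce the identity to a single algebraic relation among the quadratic exponents, isolating the only place where the two sides differ — namely the factors involving the indices $N_{a-1}$ and $N_a$. First I would observe that, by the obvious identity $\binom{n+k+1}{2} = \binom{n+k}{2} + (n+k)$, the three displayed multisums differ only through the powers of $q$ attached to the pairs $(N_{a-1},N_a)$ and, in the second sum on the right, the cascade of $N_i$'s with $i \le a-2$. The cleanest route is to think of everything as a multisum over $N_a \le N_{a-1} \le \cdots \le N_1$ (all $q$-binomials vanish otherwise), factor out the common piece $\dfrac{q^{\sum_{i=1}^{a-2}\binom{N_i+k_i}{2}}F(N_a)}{(q;q)_{N_1}}\prod_{i=1}^{a-1}\qbinom{N_i}{N_{i+1}}$, and check the remaining scalar identity
\[
q^{\binom{N_{a-1}+k_{a-1}}{2}+\binom{N_a+k_{a-1}+1}{2}}
= q^{\binom{N_{a-1}+k_{a-1}+1}{2}+\binom{N_a+k_{a-1}}{2}}
+ q^{\sum_{i=1}^{a-2}(N_i+k_i)}\,q^{\binom{N_{a-1}+k_{a-1}+1}{2}+\binom{N_a+k_{a-1}+1}{2}}.
\]
Using $\binom{n+k+1}{2}-\binom{n+k}{2}=n+k$ on both the $N_{a-1}$ and the $N_a$ terms, this collapses (after dividing by $q^{\binom{N_{a-1}+k_{a-1}}{2}+\binom{N_a+k_{a-1}}{2}}$) to
\[
q^{N_a+k_{a-1}+1} \;=\; q^{N_{a-1}+k_{a-1}+1} \;+\; q^{\sum_{i=1}^{a-2}(N_i+k_i)+N_{a-1}+N_a+2k_{a-1}+2},
\]
which is manifestly \emph{not} a term-by-term identity. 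This means the decomposition cannot be purely local: the factor $q^{\sum_{i=1}^{a-2}(N_i+k_i+1)-\binom{N_i+k_i}{2}}$ appearing in the second right-hand sum has to be absorbed by re-summing over $N_1,\dots,N_{a-2}$ first.

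So the actual mechanism I would use is a telescoping induction on $a$, driven by the Pascal recurrence \eqref{binomtri1} (or \eqref{binomtri2}) for the $q$-binomials, exactly as in the proof of Theorem~\ref{th:sym-a}. Concretely, write the left-hand side, apply \eqref{binomtri1} to the coefficient $\qbinom{N_{a-1}}{N_a}$ in the form
\[
\qbinom{N_{a-1}}{N_a}=\qbinom{N_{a-1}-1}{N_a}+q^{N_{a-1}-N_a}\qbinom{N_{a-1}-1}{N_a-1},
\]
or rather the complementary version that lets the ``extra $+1$'' in $\binom{N_a+k_{a-1}+1}{2}$ migrate from the last index to the second-to-last. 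The first resulting piece, after shifting $N_a \mapsto N_a$ and combining exponents via $\binom{N_a+k_{a-1}+1}{2} = \binom{N_a+k_{a-1}}{2}+(N_a+k_{a-1})$, should reproduce the first term on the right-hand side of \eqref{eq:q-binomial-1-2}; the second piece, where the recurrence has moved the weight up one level, is a multisum of the same shape as the left side but with $a$ replaced by $a-1$ and with $k_{a-2}$ bumped by $1$, to which the inductive hypothesis applies and produces precisely the second term. The base case $a=2$ is a direct two-line computation: apply \eqref{binomtri1} to $\qbinom{N_1}{N_2}$, note $\binom{N_2+k_1+1}{2}=\binom{N_2+k_1}{2}+N_2+k_1$ and $(1-q^{N_1})\,q^{\binom{N_1+1}{2}}/(q;q)_{N_1} = q^{\binom{N_1+1}{2}}/(q;q)_{N_1-1}$ is again of the required form, and reassemble.

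The main obstacle I anticipate is bookkeeping rather than conceptual: one must choose the \emph{right} form of the Pascal recurrence (the one that shifts the ``plus one'' upward in the chain, not downward) and be careful that the index shift $N_a \mapsto N_a-1$ or $N_{a-1}\mapsto N_{a-1}-1$ interacts correctly with the factor $(q;q)_{N_1}$ in the denominator and with the family $F(N_a)$ — note that $F$ is untouched throughout, which is why it may be arbitrary. A secondary subtlety is handling the boundary terms at $N_i=0$ and the vanishing of $q$-binomials with negative lower index, so that the telescoping genuinely closes up with no leftover boundary contribution; this is routine but must be stated. Once the inductive step is set up with the correct recurrence, the verification of the exponent arithmetic is mechanical.
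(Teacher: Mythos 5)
Your diagnosis of why the naive term-by-term matching fails is correct, and your instinct that the cascade of ``$+1$''s in the second right-hand sum must be absorbed by re-indexing over $N_1,\dots,N_{a-1}$ is exactly the right one --- but the inductive step you build on it does not work, and in fact no induction on $a$ is needed. Applying the Pascal rule \eqref{binomtri1} to $\qbinom{N_{a-1}}{N_a}$ in the left-hand side splits it into two pieces, neither of which is the first right-hand sum: for the first piece to ``reproduce the first term on the right'' one would need $q^{N_a+k_{a-1}}\qbinom{N_{a-1}-1}{N_a}=q^{N_{a-1}+k_{a-1}}\qbinom{N_{a-1}}{N_a}$ term by term, which is false. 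Moreover the second right-hand sum is a genuine $a$-fold sum in which every index $N_1,\dots,N_a$ still appears with a bumped exponent, so it cannot be ``produced precisely'' by the inductive hypothesis applied to a multisum ``with $a$ replaced by $a-1$''; that claim is unsubstantiated and, as far as I can tell, not correct. As stated, the inductive step is a gap.

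The identity you actually need is not Pascal's rule but \eqref{eq:q-binomial-trivial}, $(1-q^{M})\qbinom{N}{M}=(1-q^{N})\qbinom{N-1}{M-1}$, of which the observation $(1-q^{N_1})/(q;q)_{N_1}=1/(q;q)_{N_1-1}$ from your base case is the top link. Concretely: in the second right-hand sum replace $N_i$ by $N_i-1$ for $1\le i\le a-1$, leaving $N_a$ alone. Every $\binom{N_i+k_i+1}{2}$ reverts to $\binom{N_i+k_i}{2}$, the bottom binomial becomes $\qbinom{N_{a-1}-1}{N_a}$, and repeated use of \eqref{eq:q-binomial-trivial} telescopes
\[
\frac{1}{(q;q)_{N_1-1}}\qbinom{N_1-1}{N_2-1}\cdots\qbinom{N_{a-2}-1}{N_{a-1}-1}
=\frac{1-q^{N_{a-1}}}{(q;q)_{N_1}}\qbinom{N_1}{N_2}\cdots\qbinom{N_{a-2}}{N_{a-1}},
\]
so that all three sums now carry the identical prefactor $q^{\sum_{i\le a-2}\binom{N_i+k_i}{2}}F(N_a)\,(q;q)_{N_1}^{-1}\prod_{i\le a-2}\qbinom{N_i}{N_{i+1}}$, and the lemma collapses, for fixed $N_{a-1}$ and $N_a$, to the two-variable identity
\[
q^{N_a}\qbinom{N_{a-1}}{N_a}=q^{N_{a-1}}\qbinom{N_{a-1}}{N_a}+q^{N_a}\bigl(1-q^{N_{a-1}}\bigr)\qbinom{N_{a-1}-1}{N_a},
\]
which follows at once from $\bigl(1-q^{N_{a-1}}\bigr)\qbinom{N_{a-1}-1}{N_a}=\bigl(1-q^{N_{a-1}-N_a}\bigr)\qbinom{N_{a-1}}{N_a}$. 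This direct, non-inductive argument is the one the paper gives; your base case $a=2$ is just the instance in which the telescoping chain is empty.
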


\begin{proof}
First, by replacing $N_i$ by $N_i-1$ for $1\le i\le a-1$,  we can rewrite the second sum on the right hand side of \eqref{eq:q-binomial-1-2} as follows:
\begin{align*}
& \sum_{\substack{N_1,\ldots , N_{a-1} \ge 1,\\ N_a\ge 0} }\frac{q^{\sum_{i=1}^{a-2} \binom{N_i+k_i }{2} +\binom{N_{a-1}+k_{a-1}}{2}+\binom{N_{a}+k_{a-1}+1}{2}}F(N_a)}{(q;q)_{N_1}}\qbinom{N_1}{N_2} \cdots \qbinom{N_{a-2}}{N_{a-1}} \qbinom{N_{a-1}}{N_a} (1-q^{N_{a-1}-N_a})\\
&=\sum_{{N_1,\ldots, N_a\ge 0} }\frac{q^{\sum_{i=1}^{a-2} \binom{N_i+k_i }{2} +\binom{N_{a-1}+k_{a-1}}{2}+\binom{N_{a}+k_{a-1}+1}{2}}F(N_a)}{(q;q)_{N_1}}\qbinom{N_1}{N_2} \cdots \qbinom{N_{a-2}}{N_{a-1}} \qbinom{N_{a-1} -1}{N_a } (1-q^{N_{a-1}}),
\end{align*}
where the equality follows from \eqref{eq:q-binomial-trivial}.
 Thus, to prove \eqref{eq:q-binomial-1-2},  it is sufficient to prove that the coefficients of $F(N_a)$ on both sides match, i.e., for fixed $N_{a-1}$ and $N_a$, we need to show
\begin{align}
&  q^{N_{a}}   \qbinom{N_{a-1}}{N_a}
=  q^{N_{a-1}}   \qbinom{N_{a-1}}{N_a} 
 +  q^{ N_{a}}  \qbinom{N_{a-1}-1}{N_a} (1-q^{N_{a-1}}). \label{eq:pf-lem-q-binomial-1-2}
\end{align}
This follows from the definition of the $q$-binomial coefficients.  
\end{proof}

\subsection{Key lemmas}

For notational convenience, we write for $(k_1,k_2,\ldots,k_b)$ a finite list of nonnegative integers and $\{F(N)\}_{N\ge 0}$ a family of series in $q$,
\begin{align}\label{eq:S-sum-def}
	\fS\begin{pmatrix}
		k_1\\ k_2\\ \vdots\\ k_b
	\end{pmatrix}:=\fS_F\begin{pmatrix}
		k_1\\ k_2\\ \vdots\\ k_b
	\end{pmatrix}:=\sum_{N_1,\ldots N_b\ge 0}\frac{q^{\sum_{i=1}^b\binom{N_i+k_i}{2}}F(N_b)}{(q;q)_{N_1}}\qbinom{N_1}{N_2}\qbinom{N_2}{N_3}\cdots \qbinom{N_{b-1}}{N_b}.
\end{align}
For each $k_i$, we \emph{always} maintain the subscript $i$ to keep track of its corresponding summation index, i.e., the $\binom{N_i+k_i}{2}$ term in the power of $q$ in \eqref{eq:S-sum-def}. In particular, we write $k_i=(m)_i$ if $m$ is a specific value (e.g., $(k_{a-1}+1)_a$ in \eqref{eq:S-1-2} as $k_{a-1}+1$ is a specific value which corresponds to the summation index $a$, i.e., the $\binom{N_a+k_{a-1}+2}{2}$ in the power of $q$).

Notice that in Lemma \ref{le:q-binomial-1-2}, $\{F(N)\}_{N\ge 0}$ is an arbitrary family of series. So, by abuse of notation, if we take
\begin{align*}
	F(N_a)\mapsto \sum_{N_{a+1},\ldots N_{b}\ge 0}q^{\sum_{i=a+1}^b\binom{N_i+k_i}{2}}F(N_b)\qbinom{N_a}{N_{a+1}}\cdots \qbinom{N_{b-1}}{N_b},
\end{align*}
then Lemma \ref{le:q-binomial-1-2} implies the following result.

\begin{corollary}
	For $b\ge a\ge 2$,
	\begin{align}\label{eq:S-1-2}
		\fS\mbox{\scalebox{0.8}{$\begin{pmatrix}
					k_1\\ \vdots\\ k_{a-2}\\ k_{a-1}\\ (k_{a-1}+1)_a\\ k_{a+1}\\ \vdots\\ k_b
				\end{pmatrix}$}}
		=\fS\mbox{\scalebox{0.8}{$\begin{pmatrix}k_1\\ \vdots\\ k_{a-2}\\ (k_{a-1}+1)_{a-1}\\ (k_{a-1})_a\\ k_{a+1}\\ \vdots\\ k_b\end{pmatrix}$}}
		+\fS\mbox{\scalebox{0.8}{$\begin{pmatrix}(k_1+1)_1\\ \vdots\\ (k_{a-2}+1)_{a-2}\\ (k_{a-1}+1)_{a-1}\\ (k_{a-1}+1)_a \;\; \;\;\;  \\ k_{a+1}\\ \vdots\\ k_b\end{pmatrix}$}}.
	\end{align}
\end{corollary}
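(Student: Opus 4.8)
The plan is to obtain \eqref{eq:S-1-2} as an immediate specialization of Lemma~\ref{le:q-binomial-1-2}. The point is that in that lemma the family $\{F(N)\}_{N\ge 0}$ is entirely unconstrained, so I would apply it with $F$ replaced by the $(b-a)$-fold sum
\begin{align*}
	F(N_a)\ \longmapsto\ \sum_{N_{a+1},\ldots,N_b\ge 0} q^{\sum_{i=a+1}^b\binom{N_i+k_i}{2}}\, F(N_b)\, \qbinom{N_a}{N_{a+1}}\qbinom{N_{a+1}}{N_{a+2}}\cdots\qbinom{N_{b-1}}{N_b},
\end{align*}
where $k_{a+1},\ldots,k_b$ are the prescribed nonnegative integers and $\{F(N_b)\}_{N_b\ge 0}$ is again arbitrary; this is a perfectly good family of series in $q$ indexed by $N_a$, so the hypotheses of the lemma are met.

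First I would fix the reading convention that the row $(k_{a-1}+1)_a$ in \eqref{eq:S-1-2} means ``the value $k_{a-1}+1$ occupying the slot with summation index $a$,'' i.e.\ $k_a:=k_{a-1}+1$; with this in hand, after the substitution the prefactor $1/(q;q)_{N_1}$ together with the $q$-binomial chain $\qbinom{N_1}{N_2}\cdots\qbinom{N_{a-1}}{N_a}$ coming from Lemma~\ref{le:q-binomial-1-2} concatenates with the new tail chain $\qbinom{N_a}{N_{a+1}}\cdots\qbinom{N_{b-1}}{N_b}$ to produce exactly the chain appearing in $\fS$ as defined in \eqref{eq:S-sum-def}. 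Then I would simply read off the three terms: the left-hand side of the lemma becomes the left-hand side of \eqref{eq:S-1-2}; in the first summand on the right the only changes are $\binom{N_{a-1}+k_{a-1}}{2}\to\binom{N_{a-1}+k_{a-1}+1}{2}$ and $\binom{N_a+k_{a-1}+1}{2}\to\binom{N_a+k_{a-1}}{2}$, i.e.\ the row-list becomes $(k_1,\ldots,k_{a-2},(k_{a-1}+1)_{a-1},(k_{a-1})_a,k_{a+1},\ldots,k_b)$; and in the second summand every exponent $\binom{N_i+k_i}{2}$ with $i\le a-2$ is bumped by one while the $(a-1)$st and $a$th exponents both become $\binom{\,\cdot\,+k_{a-1}+1}{2}$, i.e.\ the row-list becomes $((k_1+1)_1,\ldots,(k_{a-2}+1)_{a-2},(k_{a-1}+1)_{a-1},(k_{a-1}+1)_a,k_{a+1},\ldots,k_b)$. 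Matching these against \eqref{eq:S-sum-def} gives precisely \eqref{eq:S-1-2}.

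I do not expect any genuine mathematical obstacle here; the proof is a one-step substitution into an already-proved lemma. The only thing demanding care — and hence the ``hard part,'' such as it is — is the bookkeeping of indices: the lemma is stated with $a$ summation variables and parameters $k_1,\ldots,k_{a-1}$ (the $a$th slot carrying the derived value $k_{a-1}+1$), whereas \eqref{eq:S-1-2} is phrased with $b$ rows, so one must check that the parameters $k_{a+1},\ldots,k_b$ smuggled inside the substituted $F$ end up attached to the correct summation indices and that no spurious shift has been introduced in the lower rows. Once the subscript notation $(m)_i$ is interpreted literally throughout, this verification is routine and the corollary follows.
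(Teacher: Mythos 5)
Your proposal is correct and is exactly the paper's own argument: the corollary is derived by substituting $F(N_a)\mapsto \sum_{N_{a+1},\ldots,N_b\ge 0}q^{\sum_{i=a+1}^b\binom{N_i+k_i}{2}}F(N_b)\qbinom{N_a}{N_{a+1}}\cdots\qbinom{N_{b-1}}{N_b}$ into Lemma~\ref{le:q-binomial-1-2} and reading off the three terms. The index bookkeeping you describe matches the paper's conventions, so nothing further is needed.
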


The above relation is a key to the next two lemmas.

\begin{lemma}\label{le:1-2-a-1}
	For $a\ge 1$, $m\ge 0$ and $\uk$ a finite list of nonnegative integers,
	\begin{align}\label{eq:1-2-a-1}
		\fS\mbox{\scalebox{0.8}{$\begin{pmatrix}(m)_1\\ \vdots\\(m)_{a-1}\\ (m)_a\\ (m+1)_{a+1}\\ \uk\end{pmatrix}$}}
		-\fS\mbox{\scalebox{0.8}{$\begin{pmatrix}(m)_1\\ \vdots\\ (m)_{a-1}\\ (m+1)_{a}\\ (m)_{a+1}\\ \uk\end{pmatrix}$}}
		=\fS\mbox{\scalebox{0.8}{$\begin{pmatrix}(m+1)_1\\ \vdots\\(m+1)_{a-1}\\(m+1)_{a}\\ (m+1)_{a+1}\\ \uk\end{pmatrix}$}}.
	\end{align}
\end{lemma}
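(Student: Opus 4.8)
The plan is to iterate the three-term relation \eqref{eq:S-1-2} of the previous corollary, peeling off the ``plus one'' at position $a+1$ and pushing it leftward through positions $a, a-1, \dots, 1$. Starting from the first term $\fS\left(\scalebox{0.7}{$\begin{smallmatrix}(m)_1\\ \vdots\\(m)_a\\(m+1)_{a+1}\\ \uk\end{smallmatrix}$}\right)$, apply \eqref{eq:S-1-2} with $(k_1,\dots,k_b)=((m)_1,\dots,(m)_{a+1},\uk)$ shifted appropriately so that the ``$(k_{a}+1)_{a+1}$'' slot matches our $(m+1)_{a+1}$. This rewrites the first term as the sum of $\fS\left(\scalebox{0.7}{$\begin{smallmatrix}(m)_1\\ \vdots\\(m)_{a-1}\\(m+1)_a\\(m)_{a+1}\\ \uk\end{smallmatrix}$}\right)$ — which is exactly the second term on the left side of \eqref{eq:1-2-a-1}, so it cancels — plus the ``doubly shifted'' term $\fS\left(\scalebox{0.7}{$\begin{smallmatrix}(m+1)_1\\ \vdots\\(m+1)_{a-1}\\(m+1)_a\\(m+1)_{a+1}\\ \uk\end{smallmatrix}$}\right)$, which is precisely the right side of \eqref{eq:1-2-a-1}. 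Thus a single clean application of \eqref{eq:S-1-2} suffices.

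The only subtlety is bookkeeping: one must check that the hypothesis $b\ge a\ge 2$ of the corollary is met. Here the relevant ``$a$'' in \eqref{eq:S-1-2} is our $a+1$, so we need $a+1\ge 2$, i.e. $a\ge 1$, which is exactly the hypothesis of the lemma; and $b$ — the total length of the list $((m)_1,\dots,(m+1)_{a+1},\uk)$ — is at least $a+1\ge 2$, so the hypothesis holds. One should also verify that the indexing convention of \eqref{eq:S-1-2}, where the shift is applied to entries $1,\dots,a-1$ of the first block and the relation is ``anchored'' at positions $a-1, a$, translates correctly under the relabeling: in our notation the anchor positions become $a$ and $a+1$, the entries getting the ``plus one'' in the last $\fS$-term on the right of \eqref{eq:S-1-2} become positions $1,\dots,a+1$, and the entry $k_{a-1}$ of \eqref{eq:S-1-2} is our common value $m$. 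Everything lines up.

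The potential obstacle, then, is not mathematical depth but notational care: \eqref{eq:S-1-2} is stated with a generic parameter $k_{a-1}$ appearing in three consecutive slots, and one must confirm that setting all of $k_1,\dots,k_{a-1}$ (in the notation of \eqref{eq:S-1-2}) equal to the single value $m$ is a legitimate specialization — it is, since $\{F(N)\}$ and the $k_i$ there are arbitrary. Once this is checked, the proof is a one-line invocation of \eqref{eq:S-1-2} followed by the observation that the first of the two resulting terms coincides with the subtracted term on the left-hand side of \eqref{eq:1-2-a-1}. I would present it in exactly that order: state the specialization of \eqref{eq:S-1-2}, display the resulting identity, and cancel.
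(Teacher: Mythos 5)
Your proposal is correct and is essentially the paper's own proof: the authors likewise prove the lemma by a single application of \eqref{eq:S-1-2} with its parameter $a$ replaced by $a+1$ and $k_1=\cdots=k_a=m$, $(k_{a+2},\ldots,k_b)=\uk$, after which the first term on the right of \eqref{eq:S-1-2} cancels against the subtracted term in \eqref{eq:1-2-a-1}. Your bookkeeping of the hypothesis $a+1\ge 2$ matches the lemma's condition $a\ge 1$ exactly as required.
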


\begin{proof}
	This is a direct consequence of \eqref{eq:S-1-2} by taking $k_1=\cdots = k_{a-1}=m$ and $(k_{a+1},\ldots,k_b)=\uk$, and then replacing $a$ by $a+1$.
\end{proof}

\begin{lemma}\label{le:1-2-a-i}
	Let $\uk$ be a finite list of nonnegative integers. The following are true.
	\begin{enumerate}[label=\textup{(\roman*)}, widest=ii, itemindent=*, leftmargin=*]
		\item For $a\ge i\ge 2$ and $m\ge 0$,
		\begin{align}\label{eq:1-2-a-i}
			\fS\mbox{\scalebox{0.8}{$\begin{pmatrix}(m)_1\\ \vdots\\ (m)_a\\ (m+1)_{a+1}\\ \vdots\\ (m+1)_{a+i-1}\\ (m+1)_{a+i}\\ \uk\end{pmatrix}$}}
			-\fS\mbox{\scalebox{0.8}{$\begin{pmatrix}(m)_1\\ \vdots\\ (m)_{a-1}\\ (m+1)_{a}\\ \vdots\\ (m+1)_{a+i-1}\\ (m)_{a+i}\\ \uk\end{pmatrix}$}}
			=\fS\mbox{\scalebox{0.8}{$\begin{pmatrix}(m)_1\\ \vdots\\ (m)_{i-1}\\ (m+1)_{i}\\ \vdots\\ (m+1)_{a+i-1}\\ (m+1)_{a+i}\\ \uk\end{pmatrix}$}}
			-\fS\mbox{\scalebox{0.8}{$\begin{pmatrix}(m)_1\\ \vdots\\ (m)_{i-2}\\ (m+1)_{i-1}\\ \vdots\\ (m+1)_{a+i-1}\\ (m)_{a+i}\\ \uk\end{pmatrix}$}}.
		\end{align}
		
		\item For $a\ge i\ge 1$ and $m\ge 0$,
		\begin{align}\label{eq:S-i-a+1}
			\fS\mbox{\scalebox{0.8}{$\begin{pmatrix}(m)_1\\ \vdots\\ (m)_a\\ (m+1)_{a+1}\\ (m)_{a+2}\\ \vdots\\ (m)_{a+i}\\ \uk\end{pmatrix}$}}
			+\cdots
			+\fS\mbox{\scalebox{0.8}{$\begin{pmatrix}(m)_1\\ \vdots\\ (m)_a\\ (m+1)_{a+1}\\ (m+1)_{a+2}\\ \vdots\\ (m+1)_{a+i}\\ \uk\end{pmatrix}$}}=
			\fS\mbox{\scalebox{0.8}{$\begin{pmatrix}(m)_1\\ \vdots\\ (m)_{i-1}\\ (m+1)_{i}\\ (m)_{i+1}\\ \vdots\\ (m)_{a+i}\\ \uk\end{pmatrix}$}}
			+\cdots
			+\fS\mbox{\scalebox{0.8}{$\begin{pmatrix}(m)_1\\ \vdots\\ (m)_{i-1}\\ (m+1)_{i}\\ (m+1)_{i+1}\\ \vdots\\ (m+1)_{a+i}\\ \uk\end{pmatrix}$}}.
		\end{align}
	\end{enumerate}
\end{lemma}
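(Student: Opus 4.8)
The plan is to establish parts~(i) and~(ii) in tandem by induction on $i$, treating $a$, $m$ and the tail $\uk$ as free parameters; the only machinery needed is the splitting relation \eqref{eq:S-1-2} and its packaged special case, Lemma~\ref{le:1-2-a-1}. Every $\fS$-column occurring in \eqref{eq:1-2-a-i} and \eqref{eq:S-i-a+1} has the normal form ``$m$ on an initial run of indices, then $m+1$ on a single contiguous block, then $m$ again''. The crucial point is that \eqref{eq:S-1-2} may be applied at the \emph{leftmost} index at which a column steps up from $m$ to $m+1$, and there the ``$+1$'' it inserts on all earlier indices keeps every exponent a nonnegative integer; since \eqref{eq:S-1-2} also permits the earlier indices to carry arbitrary values, the ``gapped'' columns it produces can be re-split in the same way, and this repeated splitting is what drives the induction.

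For the base case I take $i=1$ of part~(ii): it reads $\fS\bigl((m)_1,\dots,(m)_a,(m+1)_{a+1},\uk\bigr)=\sum_{j=1}^{a+1}\fS\bigl((m+1)_1,\dots,(m+1)_j,(m)_{j+1},\dots,(m)_{a+1},\uk\bigr)$, which follows by applying Lemma~\ref{le:1-2-a-1} to the left side, then again to the new lower column it creates, and so on, telescoping until the block of $m+1$'s has length one. Part~(i) is empty for $i=1$; in fact Lemma~\ref{le:1-2-a-1} \emph{is} the $i=1$ instance of part~(i) under the convention that a column indexed by $0$ is discarded, so no further input is needed to start.

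In the inductive step, assume both parts at $i-1$ for all admissible $a,m,\uk$, and first prove part~(i) at $i$. Apply \eqref{eq:S-1-2} to each of the two $\fS$'s on the left of \eqref{eq:1-2-a-i} at its leftmost step-up, obtaining in each case a full column (all $m+1$'s up to the right end of the original block) plus a gapped remainder in which the leading $m+1$ has been dragged one index leftward. The two full columns are reconciled using the induction hypothesis — part~(ii) at $i-1$, after peeling the leading $m+1$ — together with Lemma~\ref{le:1-2-a-1} for the length-one boundary block, while the gapped remainders are re-split and eventually matched, one admissible index count at a time, against part~(i) at $i-1$; the bookkeeping is that the displayed index count drops by one with each leftward drag, the surplus $m$'s being absorbed into $\uk$. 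The boundary value $a=i$, which is unreachable from larger $a$, is handled separately, either directly from Lemma~\ref{le:1-2-a-1} or from part~(ii) at $i$ once that is available. Then part~(ii) at $i$ follows from part~(i) at $i$ by a further telescoping: lifting all columns to the common index count $a+i$ and summing the identity of part~(i) over $a'=i,i+1,\dots,a$, the left-hand differences telescope, the right-hand differences reassemble into the top $a-i+1$ summands of \eqref{eq:S-i-a+1}, and part~(ii) at $i-1$ supplies the remaining summands $\sum_{j=1}^{i-1}$.

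I expect the inductive step for part~(i) to be the real obstacle: keeping the proliferating gapped columns straight, pairing each with the correct instance of an induction hypothesis, and reconciling the shifting index count are precisely what make the argument lengthy and error-prone, whereas the base case, the final telescoping that yields part~(ii), and the symmetry reduction already recorded in Theorem~\ref{th:sym-a} are comparatively routine.
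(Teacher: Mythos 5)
Your overall skeleton is the paper's: \eqref{eq:S-1-2} is the engine, the base case is the $i=1$ instance of part (ii) obtained by telescoping Lemma \ref{le:1-2-a-1}, and a double induction ties parts (i) and (ii) together; the only structural difference is that you induct on $i$ where the paper inducts on $a$. Two of your three components are verifiably correct: the base case, and the passage from part (i) at level $i$ (all $a$) together with part (ii) at level $i-1$ to part (ii) at level $i$ --- padding every column to length $a+i$ with trailing $(m)$'s and summing \eqref{eq:1-2-a-i} over $a'=i,\dots,a$ telescopes the left-hand differences, and combining the result with part (ii) at $(a,i-1)$ and at $(i-1,i-1)$ does reassemble \eqref{eq:S-i-a+1}.

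The genuine gap is the inductive step for part (i), which you flag as the crux but never execute, and whose proposed mechanism does not work as stated. One application of \eqref{eq:S-1-2} to each left-hand column of \eqref{eq:1-2-a-i} produces gapped columns of the shape (run of $m$'s, one $m+1$, one $m$, block of $m+1$'s, possibly a trailing $m$), which are not of the normal form occurring in part (i); and for $i=2$ there is no ``part (i) at $i-1$'' to match them against (your index-$0$ convention does not turn Lemma \ref{le:1-2-a-1} into the required two-term difference). Concretely, at $(a,i)=(2,2)$ your first round of splitting reduces \eqref{eq:1-2-a-i} to
$\fS\bigl((m)_1,(m+1)_2,(m)_3,(m+1)_4,\uk\bigr)-\fS\bigl((m+1)_1,(m)_2,(m+1)_3,(m)_4,\uk\bigr)=\fS\bigl((m+1)_1,(m)_2,(m+1)_3,(m+1)_4,\uk\bigr)$,
which is not an instance of Lemma \ref{le:1-2-a-1}, of part (i), or of part (ii) at any lower level. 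The step can be completed, compatibly with your induction on $i$, by the paper's device: unroll the \eqref{eq:S-1-2} chain fully on the first column of each side of \eqref{eq:1-2-a-i}, dragging the gap rightward until the remainder is exactly the corresponding second column; the two resulting lists of incremented columns (each carrying a block of $(m+2)$'s) are then equated by a single application of part (ii) at $(a-1,i-1)$ with $m\mapsto m+1$ and $\uk$ suitably extended. That is, the reconciliation goes through part (ii) at level $i-1$, not part (i); until that substitution is made, the hardest step of your argument is unproved.
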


\begin{proof}
	We prove this lemma by induction on $a$. The basic idea can be outlined as follows.
	\begin{enumerate}[label=(\arabic*), itemindent=*, leftmargin=*]
		\item We start with the base case, $a=1$, of Part (ii).
		
		\item Next, assuming that Part (ii) is true for some $a=b\ge 1$, we will show that Part (i) is true for $a=b+1$.
		
		\item Finally, we use Part (i) for $a=b+1$ and Part (ii) for $a=b$ to show Part (ii) for $a=b+1$, and therefore complete the proof.
	\end{enumerate}
	
	\textit{Step (1).} The $a=1$ (and thus $i=1$) case of Part (ii) is
	\begin{align*}
		\fS\begin{pmatrix}
			(m)_1 \\ (m+1)_2\\ \uk
		\end{pmatrix}
		=\fS\begin{pmatrix}
			(m+1)_1 \\ (m)_2\\ \uk
		\end{pmatrix}
		+\fS\begin{pmatrix}
			(m+1)_1 \\ (m+1)_2\\ \uk
		\end{pmatrix},
	\end{align*}
	which is a direct consequence of \eqref{eq:S-1-2}.
	
	\textit{Step (2).} Assume that Part (ii) is true for some $a=b\ge 1$. We will show that for any $j$ with $2\le j\le b+1$,
	\begin{align}\label{eq:1-2-b+1-i}
		\fS\mbox{\scalebox{0.8}{$\begin{pmatrix}(m)_1\\ \vdots\\ (m)_{b} \\ \;\;\;\; (m)_{b+1}\\ (m+1)_{b+2}\\ \vdots\\
		(m+1)_{b+j}\\(m+1)_{b+j+1}\\ \uk\end{pmatrix}$}}
		-\fS\mbox{\scalebox{0.8}{$\begin{pmatrix}(m)_1\\ \vdots\\ (m)_{b}\\ (m+1)_{b+1}\\ (m+1)_{b+2} \\ \vdots\\ (m+1)_{b+j}\\ (m)_{b+j+1}\\ \uk\end{pmatrix}$}}
		=\fS\mbox{\scalebox{0.8}{$\begin{pmatrix}(m)_1\\ \vdots\\ (m)_{j-2} \\ (m)_{j-1}\\ (m+1)_{j}\\ \vdots\\ (m+1)_{b+j}\\ (m+1)_{b+j+1}\\ \uk\end{pmatrix}$}}
		-\fS\mbox{\scalebox{0.8}{$\begin{pmatrix}(m)_1\\ \vdots\\ (m)_{j-2}\\  (m+1)_{j-1}\\(m+1)_{j} \;\;\;\;  \\ \vdots
		\\  (m+1)_{b+j}\\  (m)_{b+j+1}\\ \uk\end{pmatrix}$}}.
	\end{align}
	We repeatedly make use of \eqref{eq:S-1-2} to obtain
	\begin{align*}
		\fS\mbox{\scalebox{0.8}{$\begin{pmatrix}(m)_1\\ \vdots\\ (m)_{b} \\ (m)_{b+1}\\ (m+1)_{b+2}\\  (m+1)_{b+3}\\ \vdots\\ (m+1)_{b+j+1}\\ \uk\end{pmatrix}$}}
		&=\fS\mbox{\scalebox{0.8}{$\begin{pmatrix}(m)_1\\ \vdots\\ (m)_{b}\\ (m+1)_{b+1}\\ (m)_{b+2}\\ (m+1)_{b+3}\\ \vdots\\ (m+1)_{b+j+1}\\ \uk\end{pmatrix}$}}
		+\fS\mbox{\scalebox{0.8}{$\begin{pmatrix}(m+1)_1\\ \vdots\\ (m+1)_{b+j+1}\\ \uk\end{pmatrix}$}}\\
		&=\fS\mbox{\scalebox{0.8}{$\begin{pmatrix}(m)_1\\ \vdots\\ (m)_{b}\\ (m+1)_{b+1}\\ (m+1)_{b+2}\\ (m)_{b+3}\\ (m+1)_{b+4}\\ \vdots\\ (m+1)_{b+j+1}\\ \uk\end{pmatrix}$}}
		+\fS\mbox{\scalebox{0.8}{$\begin{pmatrix}(m+1)_1\\ \vdots\\ (m+1)_{b}\\ (m+2)_{b+1}\\ (m+1)_{b+2}  \\ (m+1)_{b+3}  \\ (m+1)_{b+4} \\ \vdots\\ (m+1)_{b+j+1}\\ \uk\end{pmatrix}$}}
		+\fS\mbox{\scalebox{0.8}{$\begin{pmatrix}(m+1)_1\\ \vdots\\ (m+1)_{b+j+1}\\ \uk\end{pmatrix}$}}\\
		&= \cdots\\
		&=\fS\mbox{\scalebox{0.8}{$\begin{pmatrix}(m)_1\\ \vdots\\ (m)_{b}\\ (m+1)_{b+1}\\ \vdots\\ (m+1)_{b+j-1}\\  (m+1)_{b+j}\\ (m)_{b+j+1}\\ \uk\end{pmatrix}$}}
		+\fS\mbox{\scalebox{0.8}{$\begin{pmatrix}(m+1)_1\\ \vdots\\ (m+1)_{b}\\ (m+2)_{b+1}\\ \vdots\\ (m+2)_{b+j-1}\\ (m+1)_{b+j}\\ (m+1)_{b+j+1}\\ \uk\end{pmatrix}$}}
		+\cdots
		+\fS\mbox{\scalebox{0.8}{$\begin{pmatrix}(m+1)_1\\ \vdots\\ (m+1)_{b}\\ (m+2)_{b+1}\\ (m+1)_{b+2}\\ \vdots \\ (m+1)_{b+j}\\ (m+1)_{b+j+1}\\ \uk\end{pmatrix}$}}
		+\fS\mbox{\scalebox{0.8}{$\begin{pmatrix}(m+1)_1\\ \vdots\\ (m+1)_{b+j+1}\\ \uk\end{pmatrix}$}}.
	\end{align*}
	Thus,
	\begin{align*}
		\LHS\eqref{eq:1-2-b+1-i}
		&=\fS\mbox{\scalebox{0.8}{$\begin{pmatrix}(m+1)_1\\ \vdots\\ (m+1)_{b}\\ (m+2)_{b+1}\\ \vdots\\ (m+2)_{b+j-1}\\ (m+1)_{b+j}\\ (m+1)_{b+j+1}\\ \uk\end{pmatrix}$}}
		+\cdots
		+\fS\mbox{\scalebox{0.8}{$\begin{pmatrix}(m+1)_1\\ \vdots\\ (m+1)_{b}\\ (m+2)_{b+1}\\ (m+1)_{b+2}\\ \vdots\\ (m+1)_{b+j}\\  (m+1)_{b+j+1}\\ \uk\end{pmatrix}$}}
		+\fS\mbox{\scalebox{0.8}{$\begin{pmatrix}(m+1)_1\\ \vdots\\ (m+1)_{b+j+1}\\ \uk\end{pmatrix}$}}.
	\end{align*}
	A similar application of \eqref{eq:S-1-2} to the right hand side of \eqref{eq:S-i-a+1} gives
	\begin{align*}
		\RHS\eqref{eq:1-2-b+1-i}
		&=\fS\mbox{\scalebox{0.8}{$\begin{pmatrix}(m+1)_1\\ \vdots\\ (m+1)_{j-2}\\ (m+2)_{j-1}\\ \vdots\\ (m+2)_{b+j-1}\\ (m+1)_{b+j}\\ (m+1)_{b+j+1}\\ \uk\end{pmatrix}$}}
		+\cdots
		+\fS\mbox{\scalebox{0.8}{$\begin{pmatrix}(m+1)_1\\ \vdots\\ (m+1)_{j-2}\\ (m+2)_{j-1}\\ (m+1)_{j}\\ \vdots\\   (m+1)_{b+j}\\  (m+1)_{b+j+1}\\ \uk\end{pmatrix}$}}
		+\fS\mbox{\scalebox{0.8}{$\begin{pmatrix}(m+1)_1\\ \vdots\\ (m+1)_{b+j+1}\\ \uk\end{pmatrix}$}}.
	\end{align*}
	Now, in \eqref{eq:S-i-a+1}, we first take $(m,a,i)\mapsto (m+1,b,j-1)$ and then with abuse of notation choose $\uk\mapsto \big((m+1)_{b+j},(m+1)_{b+j+1},\uk\big)$. It follows that $\LHS\eqref{eq:1-2-b+1-i}=\RHS\eqref{eq:1-2-b+1-i}$, and therefore \eqref{eq:1-2-b+1-i} holds true.
	
	\textit{Step (3).} We want to show Part (ii) for $a=b+1$. That is, for $1\le i\le b+1$,
	\begin{align}\label{eq:S-i-b}
		\fS\mbox{\scalebox{0.8}{$\begin{pmatrix}(m)_1\\ \vdots\\ (m)_{b+1}\\ (m+1)_{b+2}\\ (m)_{b+3}\\ \vdots\\ (m)_{b+i+1}\\ \uk\end{pmatrix}$}}
		+\cdots
		+\fS\mbox{\scalebox{0.8}{$\begin{pmatrix}(m)_1\\ \vdots\\ (m)_{b+1}\\ (m+1)_{b+2}\\ (m+1)_{b+3}\\  \vdots\\ (m+1)_{b+i+1}\\ \uk\end{pmatrix}$}}
		=
		\fS\mbox{\scalebox{0.8}{$\begin{pmatrix}(m)_1\\ \vdots\\ (m)_{i-1}\\ (m+1)_{i}\\ (m)_{i+1}\\ \vdots\\ (m)_{b+i+1}\\ \uk\end{pmatrix}$}}
		+\cdots
		+\fS\mbox{\scalebox{0.8}{$\begin{pmatrix}(m)_1\\ \vdots\\ (m)_{i-1}\\ (m+1)_{i} \\ (m+1)_{i+1}\\ \vdots\\  (m+1)_{b+i+1}\\ \uk\end{pmatrix}$}}.
	\end{align}
	First, in \eqref{eq:1-2-a-1}, we take $a\mapsto b+1$, and then with abuse of notation choose $\uk=\big((m)_{b+3},\uk\big)$. Thus,
	\begin{align*}
		\fS\mbox{\scalebox{0.8}{$\begin{pmatrix}(m)_1\\ \vdots\\ (m)_b \\ (m)_{b+1}\\ (m+1)_{b+2}\\ (m)_{b+3}\\ \uk\end{pmatrix}$}}
		-\fS\mbox{\scalebox{0.8}{$\begin{pmatrix}(m)_1\\ \vdots\\ (m)_{b}\\ (m+1)_{b+1}\\ (m)_{b+2}\\ (m)_{b+3}\\ \uk\end{pmatrix}$}}
		=\fS\mbox{\scalebox{0.8}{$\begin{pmatrix}(m+1)_1\\ \vdots\\ \vdots \\ (m+1)_{b+1} \\ (m+1)_{b+2}\\ (m)_{b+3}\\ \uk\end{pmatrix}$}}.
	\end{align*}
	Adding this with the $j=2$ case of \eqref{eq:1-2-b+1-i}, i.e.,
	\begin{align*}
		\fS\mbox{\scalebox{0.8}{$\begin{pmatrix}(m)_1\\ \vdots\\ (m)_b\\ (m)_{b+1}\\ (m+1)_{b+2}\\ (m+1)_{b+3}\\ \uk\end{pmatrix}$}}
		-\fS\mbox{\scalebox{0.8}{$\begin{pmatrix}(m)_1\\ \vdots\\ (m)_{b}\\ (m+1)_{b+1}\\ (m+1)_{b+2}\\ (m)_{b+3}\\ \uk\end{pmatrix}$}}
		=\fS\mbox{\scalebox{0.8}{$\begin{pmatrix}(m)_{1}\\ (m+1)_{2}\\ \vdots\\ \vdots  \\ (m+1)_{b+2} \\ (m+1)_{b+3}\\ \uk\end{pmatrix}$}}
		-\fS\mbox{\scalebox{0.8}{$\begin{pmatrix}(m+1)_{1}\\ \vdots\\ \vdots  \\ (m+1)_{b+1} \\ (m+1)_{b+2}\\ (m)_{b+3}\\ \uk\end{pmatrix}$}},
	\end{align*}
	we have
	\begin{align*}
		\fS\mbox{\scalebox{0.8}{$\begin{pmatrix}(m)_1\\ \vdots\\ (m)_b \\ (m)_{b+1}\\ (m+1)_{b+2}\\ (m)_{b+3}\\ \uk\end{pmatrix}$}}
		+\fS\mbox{\scalebox{0.8}{$\begin{pmatrix}(m)_1\\ \vdots\\ (m)_b \\ (m)_{b+1}\\ (m+1)_{b+2}\\ (m+1)_{b+3}\\ \uk\end{pmatrix}$}}
		&=\fS\mbox{\scalebox{0.8}{$\begin{pmatrix}(m)_1\\ \vdots\\ (m)_{b}\\ (m+1)_{b+1}\\ (m)_{b+2}\\ (m)_{b+3}\\ \uk\end{pmatrix}$}}
		+\fS\mbox{\scalebox{0.8}{$\begin{pmatrix}(m)_1\\ \vdots\\ (m)_{b}\\ (m+1)_{b+1}\\ (m+1)_{b+2}\\ (m)_{b+3}\\ \uk\end{pmatrix}$}}
		+\fS\mbox{\scalebox{0.8}{$\begin{pmatrix}(m)_{1}\\ (m+1)_{2}\\ \vdots\\ \vdots  \\ (m+1)_{b+2} \\ (m+1)_{b+3}\\ \uk\end{pmatrix}$}}.
	\end{align*}
	We then choose $\uk=\big((m)_{b+4},\uk\big)$ in the above and then add it with the $j=3$ case of \eqref{eq:1-2-b+1-i}. Further, if we repeat this procedure until adding the $j=i$ case of \eqref{eq:1-2-b+1-i}, it follows that, for $1\le i\le b+1$,
	\begin{align*}
		&\fS\mbox{\scalebox{0.8}{$\begin{pmatrix}(m)_1\\ \vdots\\ (m)_{b+1}\\ (m+1)_{b+2}\\ (m)_{b+3}\\ \vdots\\ (m)_{b+i+1}\\ \uk\end{pmatrix}$}}
		+\cdots
		+\fS\mbox{\scalebox{0.8}{$\begin{pmatrix}(m)_1\\ \vdots\\ (m)_{b+1}\\ (m+1)_{b+2}\\ (m+1)_{b+3} \\ \vdots\\ (m+1)_{b+i+1}\\ \uk\end{pmatrix}$}}=\fS\mbox{\scalebox{0.8}{$\begin{pmatrix}(m)_1\\ \vdots\\ (m)_{b}\\ (m+1)_{b+1}\\ (m)_{b+2}\\ \vdots\\ (m)_{b+i+1}\\ \uk\end{pmatrix}$}}
		+\cdots
		+\fS\mbox{\scalebox{0.8}{$\begin{pmatrix}(m)_1\\ \vdots\\ (m)_{b}\\ (m+1)_{b+1}\\ \vdots\\ (m+1)_{b+i}\\ (m)_{b+i+1}\\ \uk\end{pmatrix}$}}
		+\fS\mbox{\scalebox{0.8}{$\begin{pmatrix}(m)_{1}\\ \vdots\\ (m)_{i-1}\\ (m+1)_{i}\\ \vdots.  \\ (m+1)_{b+i} \\ (m+1)_{b+i+1}\\ \uk\end{pmatrix}$}}.
	\end{align*}
	We point out that the $i=1$ case of the above is simply \eqref{eq:1-2-a-1} with $a\mapsto b+1$. Finally, according to the inductive assumption, we make use of the $a=b$ case of \eqref{eq:S-i-a+1} with $\uk\mapsto \big((m)_{b+i+1},\uk\big)$ to obtain
	\begin{align*}
		\fS\mbox{\scalebox{0.8}{$\begin{pmatrix}(m)_1\\ \vdots\\ (m)_{b}\\ (m+1)_{b+1}\\ (m)_{b+2}\\ \vdots\\ (m)_{b+i+1}\\ \uk\end{pmatrix}$}}
		+\cdots
		+\fS\mbox{\scalebox{0.8}{$\begin{pmatrix}(m)_1\\ \vdots\\ (m)_{b}\\ (m+1)_{b+1}\\ \vdots\\ (m+1)_{b+i}\\ (m)_{b+i+1}\\ \uk\end{pmatrix}$}}=
		\fS\mbox{\scalebox{0.8}{$\begin{pmatrix}(m)_1\\ \vdots\\ (m)_{i-1}\\ (m+1)_{i}\\ (m)_{i+1}\\ \vdots\\ (m)_{b+i+1}\\ \uk\end{pmatrix}$}}
		+\cdots
		+\fS\mbox{\scalebox{0.8}{$\begin{pmatrix}(m)_1\\ \vdots\\ (m)_{i-1}\\ (m+1)_{i}\\ \vdots\\ (m+1)_{b+i}\\ (m)_{b+i+1}\\ \uk\end{pmatrix}$}}.
	\end{align*}
	Combining this with the aligned identity in the above confirms \eqref{eq:S-i-b}.
\end{proof}

We record a corollary of Lemma \ref{le:1-2-a-i}, which plays a central role in the proof of Theorem \ref{th:2a-general}.

\begin{corollary}\label{coro:b-1-important}
	Let $\{F(N)\}_{N\ge 0}$ be a family of series in $q$. For $b\ge 2$,
	\begin{align}
		& \sum_{j=b}^{2b-2} \left(\sum_{N_1,\ldots N_{2b}\ge 0}\frac{q^{\sum_{i=1}^{2b} \binom{N_i+1}{2}+ \sum_{i=b}^{j} (N_i+1) } F(N_{2b})}{(q;q)_{N_1}} \qbinom{N_1}{N_2}\qbinom{N_2}{N_3}\cdots \qbinom{N_{2b-1}}{N_{2b}}\right)\notag\\
		&=\sum_{j=b-1}^{2b-2} \left(\sum_{N_1,\ldots N_{2b}\ge 0}\frac{q^{ \sum_{i=1}^{2b} \binom{N_i+1}{2}+ \sum_{i=b-1}^{j} (N_i+1) }F(N_{2b})}{(q;q)_{N_1}}\qbinom{N_1}{N_2}\qbinom{N_2}{N_3} \cdots \qbinom{N_{2b-1}}{N_{2b}}\right).
	\end{align}
\end{corollary}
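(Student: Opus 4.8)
The plan is to recognize both sides of the asserted identity as the two sides of the multisum relation \eqref{eq:S-i-a+1} in Lemma~\ref{le:1-2-a-i}(ii), after rewriting the $q$-exponents in the $\fS$-notation of \eqref{eq:S-sum-def}. The only arithmetic needed is the elementary identity $\binom{N+1}{2}+(N+1)=\binom{N+2}{2}$, valid for every integer $N$. Hence, for $b\le j\le 2b-2$,
\begin{equation*}
\sum_{i=1}^{2b}\binom{N_i+1}{2}+\sum_{i=b}^{j}(N_i+1)=\sum_{i\notin\{b,\ldots,j\}}\binom{N_i+1}{2}+\sum_{i=b}^{j}\binom{N_i+2}{2},
\end{equation*}
so the $j$-th summand on the left-hand side of the corollary is exactly $\fS_F$ evaluated at the list $(k_1,\ldots,k_{2b})$ with $k_i=2$ for $b\le i\le j$ and $k_i=1$ otherwise, where $F(N_{2b})$ plays the role of the arbitrary family in \eqref{eq:S-sum-def}. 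In the same way, for $b-1\le j\le 2b-2$, the $j$-th summand on the right-hand side of the corollary is $\fS_F$ evaluated at the list with $k_i=2$ for $b-1\le i\le j$ and $k_i=1$ otherwise.

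With this dictionary, I would apply \eqref{eq:S-i-a+1} with the specialization $m=1$, $a=b-1$, $i=b-1$ (so the hypotheses $a\ge i\ge 1$ and $m\ge 0$ hold because $b\ge 2$), taking $\uk$ to be the list consisting of two $1$'s, so that the $\fS$-symbols range over the indices $1,\ldots,2b$. Then the left-hand side of \eqref{eq:S-i-a+1} is a sum of $i=b-1$ terms, the $\ell$-th of which has $k_i=2$ precisely for $b\le i\le b+\ell-1$; writing $j=b+\ell-1$ identifies this with $\sum_{j=b}^{2b-2}$ of the left-hand summands of the corollary. The right-hand side of \eqref{eq:S-i-a+1} is a sum of $a+1=b$ terms, the $\ell$-th of which has $k_i=2$ precisely for $b-1\le i\le b+\ell-2$; writing $j=b+\ell-2$ identifies this with $\sum_{j=b-1}^{2b-2}$ of the right-hand summands. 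Thus the corollary follows directly from Lemma~\ref{le:1-2-a-i}(ii).

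The genuine content is the transformation in Lemma~\ref{le:1-2-a-i}, whose lengthy inductive proof has already been given; what remains here is bookkeeping, namely matching the ranges of $j$ against the telescoping lists of $\fS$-symbols and verifying the index shifts $j=b+\ell-1$ and $j=b+\ell-2$. The one place where an off-by-one slip could creep in is near $j=b-1$ on the right, which corresponds to the $\ell=1$ summand of \eqref{eq:S-i-a+1} in which only the single index $b-1$ carries $k_i=2$; this is precisely the asymmetry between the $i$ terms on one side and the $a+1$ terms on the other side of \eqref{eq:S-i-a+1}, and it must be tracked carefully when the specialization forces $a=i$.
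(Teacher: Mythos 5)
Your proposal is correct and coincides with the paper's own proof: the paper likewise obtains the corollary by specializing \eqref{eq:S-i-a+1} with $(m,a,i)\mapsto(1,b-1,b-1)$ and $\uk=\big((1)_{2b-1},(1)_{2b}\big)$, the translation between the exponents $\binom{N_i+1}{2}+(N_i+1)$ and $\binom{N_i+2}{2}$ and the index matching $j=b+\ell-1$, $j=b+\ell-2$ being exactly the bookkeeping you describe.
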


\begin{proof}
	In Lemma \ref{le:1-2-a-i} (ii), we take $(m,a,i)\mapsto (1,b-1,b-1)$, and then choose $\uk=\big((1)_{2b-1},(1)_{2b}\big)$. The desired result follows.
\end{proof}

\subsection{Proof of Theorem \ref{th:2a-general}}

Our proof of Theorem \ref{th:2a-general} is based on induction on $a$. For convenience, we write the two sides of \eqref{eq:2a-general} as $\LHS\eqref{eq:2a-general}_a$ and $\RHS\eqref{eq:2a-general}_a$, respectively.

\textit{Base Case.} We first prove the $a=1$ case of \eqref{eq:2a-general}. That is,
\begin{align*}
	\sum_{N_1, N_2\ge 0}\frac{q^{\binom{N_1+1}{2}+\binom{N_2+1}{2}} F(N_2)}{(q;q)_{N_1}}\qbinom{N_1+1}{N_2}=\sum_{N_1, N_2\ge 0}\frac{q^{\binom{N_1+1}{2}+\binom{N_2}{2}} F(N_2)}{(q;q)_{N_1}}\qbinom{N_1}{N_2}.
\end{align*}
Next, we equate the coefficient of $F(N_2)$. Then
$$
\sum_{N_1\ge 0} \frac{q^{N_1(N_1+1)/2+N_2(N_2+1)/2}}{(q;q)_{N_1}} \qbinom{N_1+1}{N_2}=\sum_{N_1 \ge 0} \frac{q^{N_1(N_1+1)/2 +N_2(N_2-1)/2 }}{(q;q)_{N_1}} \qbinom{N_1}{N_2}.
$$
Using \eqref{eq:q-Bin} with $N=\infty$, we can deduce that both sums become
$$
\frac{q^{N_2^2} (-q^{N_2+1};q)_{\infty}}{(q;q)_{N_2}}. 
$$

\textit{Inductive Step.} Assume that \eqref{eq:2a-general} is true for some $a=b-1\ge 1$. We will show \eqref{eq:2a-general} for $a=b$. First, by a similar argument to how we expand $\LHS\eqref{eq:sym-a}$ in the proof of Theorem \ref{th:sym-a}, we have
\begin{align*}
	&\LHS\eqref{eq:2a-general}_{b}\\
	&=\sum_{N_1,\ldots,N_{2b}\ge 0}\frac{q^{\sum_{i=1}^{2b} \binom{N_i+1}{2}} F(N_{2b})}{(q;q)_{N_1}}\qbinom{N_1}{N_2}\cdots \qbinom{N_{2b-1}}{N_{2b}}\\
	&\quad+\sum_{j=b}^{2b-2} \left(
	\sum_{N_1,\ldots, N_{2b}\ge 0}\frac{q^{\sum_{i=1}^{2b} \binom{N_{i}+1}{2} + \sum_{i=b}^{j} (N_i+1) }F(N_{2b})}{(q;q)_{N_1}}\qbinom{N_1}{N_2}\cdots \qbinom{N_{2b-1}}{N_{2b}} \right)\\
	&\quad+\sum_{N_1,\ldots , N_{2b}\ge 0}\frac{q^{\sum_{i=1}^{2b} \binom{N_i+1}{2}+\sum_{i=b}^{2b-1} (N_i+1) }F(N_{2b}+1)}{(q;q)_{N_1}}\qbinom{N_1}{N_2}\cdots \qbinom{N_{2b-1}}{N_{2b}}.
\end{align*}
Let us define an auxiliary series
\begin{align}\label{eq:aux-ser-T}
	\Aux&:=\sum_{N_1,\ldots, N_{2b}\ge 0}\frac{q^{\sum_{i=1}^{2b} \binom{N_i+1}{2}}F(N_{2b})}{(q;q)_{N_1}} \qbinom{N_1}{N_2}\cdots \qbinom{N_{b-2}}{N_{b-1}}  \qbinom{N_{b-1}+1}{N_{b}} \qbinom{N_{b}}{N_{b+1}}\cdots \qbinom{N_{2b-1}}{N_{2b}}.
\end{align}
We may also expand $\Aux$ as
\begin{align*}
	\Aux&=\sum_{N_1,\ldots,N_{2b}\ge 0}\frac{q^{\sum_{i=1}^{2b} \binom{N_i+1}{2}} F(N_{2b})}{(q;q)_{N_1}}\qbinom{N_1}{N_2}\cdots \qbinom{N_{2b-1}}{N_{2b}}\\
	&\quad+\sum_{j=b-1}^{2b-2} \left(\sum_{N_1,\ldots , N_{2b}\ge 0}\frac{q^{ \sum_{i=1}^{2b} \binom{N_1+1}{2}+ \sum_{i=b-1}^{j} (N_i+1) } F(N_{2b})}{(q;q)_{N_1}} \qbinom{N_1}{N_2}\cdots \qbinom{N_{2b-1}}{N_{2b}}\right) \notag\\
	&\quad+\sum_{N_1,\ldots , N_{2b}\ge 0}\frac{ q^{ \sum_{i=1}^{2b}  \binom{N_i+1}{2}+ \sum_{i=b-1}^{2b-1} (N_{i}+1) }F(N_{2b}+1)}{(q;q)_{N_1}}\qbinom{N_1}{N_2}\cdots \qbinom{N_{2b-1}}{N_{2b}}.
\end{align*}
With recourse to Corollary \ref{coro:b-1-important}, we find that
\begin{align}\label{eq:LHS-Aux}
	&\LHS\eqref{eq:2a-general}_{b}-\Aux\notag\\
	&=\sum_{N_1,\ldots,N_{2b}\ge 0}\frac{q^{\sum_{i=1}^{2b} \binom{N_i+1}{2} +\sum_{i=b}^{2b-1} (N_i+1) }F(N_{2b}+1)}{(q;q)_{N_1}}\qbinom{N_1}{N_2}\cdots \qbinom{N_{2b-1}}{N_{2b}}\notag\\
	&\quad-\sum_{N_1,\ldots,N_{2b}\ge 0}\frac{q^{\sum_{i=1}^{2b} \binom{N_i+1}{2}+\sum_{i=b-1}^{2b-1} (N_i+1) }F(N_{2b}+1)}{(q;q)_{N_1}}\qbinom{N_1}{N_2}\cdots \qbinom{N_{2b-1}}{N_{2b}}.
\end{align}
On the other hand, we rewrite $\RHS\eqref{eq:2a-general}_{b}$ as
\begin{align*}
	&\RHS\eqref{eq:2a-general}_{b}\\
	&=\sum_{N_1,\ldots,N_{2b-2}\ge 0}\frac{q^{ \sum_{i=1}^{2b-2} \binom{N_i+1}{2} -\sum_{i=1}^{b-1} N_{2i} }}{(q;q)_{N_1}}\qbinom{N_1}{N_2}\cdots \qbinom{N_{2b-3}}{N_{2b-2}}\\
	&\quad\times \sum_{N_{2b-1},N_{2b}\ge 0}q^{\binom{N_{2b-1}+1}{2}+\binom{N_{2b}}{2}}F(N_{2b})\qbinom{N_{2b-2}}{N_{2b-1}}\qbinom{N_{2b-1}}{N_{2b}}.
\end{align*}
According to the inductive assumption, we make the use of \eqref{eq:2a-general} with $a=b-1$ where we choose
\begin{align*}
	F(N_{2b-2})\mapsto \sum_{N_{2b-1},N_{2b}\ge 0}q^{\binom{N_{2b-1}+1}{2}+\binom{N_{2b}}{2}}F(N_{2b})\qbinom{N_{2b-2}}{N_{2b-1}}\qbinom{N_{2b-1}}{N_{2b}}.
\end{align*}
Therefore,
\begin{align*}
	&\RHS\eqref{eq:2a-general}_{b}\\
	&=\sum_{N_1,\ldots,N_{2b}\ge 0}\frac{q^{ \sum_{i=1}^{2b} \binom{N_i+1}{2} -N_{2b} }F(N_{2b})}{(q;q)_{N_1}}\qbinom{N_1}{N_2}\cdots \qbinom{N_{b-2}}{N_{b-1}}  \qbinom{N_{b-1}+1}{N_{b}} \qbinom{N_{b}}{N_{b+1}}\cdots \qbinom{N_{2b-1}}{N_{2b}}.\\
\end{align*}
Recalling \eqref{eq:aux-ser-T}, we further have
\begin{align*}
	&\RHS\eqref{eq:2a-general}_{b}-\Aux\\
	&=\sum_{N_1,\ldots,N_{2b}\ge 0}\frac{q^{\sum_{i=1}^{2b} \binom{N_i+1}{2}- N_{2b} }F(N_{2b})}{(q;q)_{N_1}}\\
	&\quad\times(1-q^{N_{2b}})\qbinom{N_1}{N_2}\cdots \qbinom{N_{b-2}}{N_{b-1}}  \qbinom{N_{b-1}+1}{N_{b}} \qbinom{N_{b}}{N_{b+1}}\cdots \qbinom{N_{2b-1}}{N_{2b}}\\
	&=\sum_{N_1,\ldots,N_{2b}\ge 0}\frac{ q^{\sum_{i=1}^{2b} \binom{N_i+1}{2}-N_{2b} }F(N_{2b})}{(q;q)_{N_1}}\\
	&\quad\times
	(1-q^{N_{b-1}+1})\qbinom{N_1}{N_2}\cdots \qbinom{N_{b-2}}{N_{b-1}}  \qbinom{N_{b-1}}{N_{b}-1} \qbinom{N_{b}-1}{N_{b+1}-1}\cdots \qbinom{N_{2b-1}-1}{N_{2b}-1}\\
	&=\sum_{N_1,\ldots,N_{2b}\ge 0}\frac{q^{\sum_{i=1}^{2b} \binom{N_i+1}{2}+\sum_{i=b}^{2b-1} (N_i+1) }F(N_{2b}+1)}{(q;q)_{N_1}}\\
	&\quad\times(1-q^{N_{b-1}+1})\qbinom{N_1}{N_2}\cdots \qbinom{N_{b-2}}{N_{b-1}}  \qbinom{N_{b-1}}{N_{b}} \qbinom{N_{b}}{N_{b+1}}\cdots \qbinom{N_{2b-1}}{N_{2b}},
\end{align*}
which tells us that
\begin{align}\label{eq:RHS-Aux}
	&\RHS\eqref{eq:2a-general}_{b}-\Aux\notag\\
	&=\sum_{N_1,\ldots,N_{2b}\ge 0}\frac{q^{ \sum_{i=1}^{2b} \binom{N_i+1}{2}+\sum_{i=b}^{2b-1} (N_i+1)}F(N_{2b}+1)}{(q;q)_{N_1}}\qbinom{N_1}{N_2}\cdots \qbinom{N_{2b-1}}{N_{2b}}\notag\\
	&\quad-\sum_{N_1,\ldots,N_{2b}\ge 0}\frac{q^{\sum_{i=1}^{m} \binom{N_i+1}{2}+ \sum_{i=b-1}^{2b-1} (N_{i}+1) }F(N_{2b}+1)}{(q;q)_{N_1}} \qbinom{N_1}{N_2}\cdots \qbinom{N_{2b-1}}{N_{2b}}.
\end{align}
Combining \eqref{eq:LHS-Aux} and \eqref{eq:RHS-Aux} implies that $\LHS\eqref{eq:2a-general}_{b}=\RHS\eqref{eq:2a-general}_{b}$, which is exactly our desired result. \qed


\section{Concluding Remarks}\label{sec6}

Let us write $W_n=G(2,1,n)$ for the Weyl group of type $B_n$ (or $C_n$). Let $\mathcal{H}^{rat}_{c_1,c_2}(W_n)$ denote the rational Cherednik algebra of $W_n$,  with parameter $c_1$ (resp.~$c_2$) assigned to the reflections associated to hyperplanes $z_i=z_j$ (resp.~$z_i=0$), defined by Etingof--Ginzburg~\cite{EG}. Let $\Omega^{c_1,c_2}(n)$ denote the number of finite dimensional simple modules of $\mathcal{H}^{rat}_{c_1,c_2}(W_n)$. It is known~\cite{GN} (private communication with E. Norton) that 
\begin{equation*} 
\sum_{n\geq 0} \Omega^{\frac{1}{2},1}(n) x^n= \prod_{n\ge 1} \frac{1+x^{2n} }{1-x^{2n} },
\end{equation*}
and
\begin{equation*} 
\quad \sum_{n\geq 0} \Omega^{\frac{1}{2},\frac{1}{2}}(n)  x^n= \prod_{n\ge 1} \frac{1+x^{2n-1}}{1-x^{2n}}.
\end{equation*}

Therefore, our results imply that the number of simple modules in the block of $\mathcal{H}_{-1;-1, 1}(W_n)$ (resp.~$\mathcal{H}_{-1;-1, -1}(W_n)$), labeled by $\omega$, equals the number of finite dimensional simple modules of $\mathcal{H}^{rat}_{\frac{1}{2},1}(W_{n-\omega^2})$ (resp.~$\mathcal{H}^{rat}_{\frac{1}{2},\frac{1}{2}}(W_{n-\omega^2+\omega})$). This provides evidence (in our very special case) for expectations of experts (private communication with P. Shan) that maximal support and minimal support simple modules in dual blocks of category $\mathcal{O}$'s of cyclotomic rational double affine Hecke algebras correspond to each other  under the so-called level-rank duality (see, for example,~\cite{RSVV}).
It is likely that this can be deduced by combining results in~\cite{Gerber} and \cite{RSVV}. 

After we had proved our theorems, we realized that one might derive Theorem~\ref{thm:1.2} and Corollary~\ref{coro:1.4}  by combining results in~\cite{Ariki96,AM,LM} and the Weyl--Kac character formula computations for affine Lie algebras in~\cite{FL}.  However, we emphasize that our approach is more direct and does not rely on deep results from representation theory,  in particular, Ariki's categorification theorem and Weyl--Kac character formula.

It is also shown in \cite{chen} that the infinite product in Theorem~\ref{thm:AM} is an overpartition analogue of the generalized Rogers--Ramanujan identities. Thus, Theorem~\ref{thm:AM} tells us a connection of the multipartitions in $\Lambda^{a,m}(n)$ to the overpartition analogues of the generalized Rogers--Ramanujan type partitions.

\subsection*{Acknowledgements}

S.~Chern was supported by a Killam Postdoctoral Fellowship from the Killam Trusts. T.~Xue was supported in part by the ARC grant DP150103525. T.~Xue thanks Thomas Gerber, Andrew Mathas, Emily Norton and Peng Shan for helpful discussions. A.~J.~Yee was supported in part by a grant ($\#633963$) from the Simons Foundation.

\end{document}